\documentclass{imsart}
\usepackage[utf8]{inputenc}
\usepackage[T1]{fontenc}
\usepackage{hyperref}
\usepackage{amsmath,amsthm} 
\usepackage{amssymb}
\usepackage{graphicx}
\usepackage{bbm}
\startlocaldefs
%insert your own macros 
\usepackage{mathtools}
\newtheorem{thm}{Theorem}
\newtheorem{prop}{Proposition}
\newtheorem{lem}{Lemma}
\newtheorem{cor}{Corollary}
\newtheorem{Def}{Definition}
\usepackage{verbatim}
\def\Var{\mathop{\rm Var}}
\endlocaldefs

% settings to be filled by editor
%\volume{}
%\issue{}
%\pubyear{}
%\doi{}
%\arxiv{math.PR/00000000} % If available

%\setmainlanguageenglish
%\firstpage{1}
\begin{document}
\begin{frontmatter}
\title{Quantization/Clustering: when and why does \texorpdfstring{$k$}{k}-means work.}
\runtitle{When does $k$-means work?}
%\alttitle{Guide d'utilisation de la classe \LaTeX  du J-SFdS}

\begin{aug}
\author{\fnms{Cl\'{e}ment} \snm{ Levrard} \ead[label=e1]{levrard@math.univ-paris-diderot.fr}}
\runauthor{C. Levrard}
 \affiliation{LPMA, Universit\'e Paris Diderot, 8 place Aur\'e lie Nemours, 75013 Paris \printead{e1}}
\end{aug}

\begin{abstract}
Though mostly used as a clustering algorithm, $k$-means are originally designed as a quantization algorithm. Namely, it aims at providing a compression of a probability distribution with $k$ points. Building upon \cite{Levrard15,Tang16}, we try to investigate how and when these two approaches are compatible. Namely, we show that provided the sample distribution satisfies a margin like condition (in the sense of \cite{Tsybakov99} for supervised learning), both the associated empirical risk minimizer and the output of Lloyd's algorithm provide almost optimal classification in certain cases (in the sense of \cite{Azizyan13}). Besides, we also show that they achieved fast and optimal convergence rates in terms of sample size and compression risk.
\end{abstract}

\begin{keyword}[class=MSC]
\kwd{62H30}
\kwd{62E17}
\end{keyword}

\begin{keyword}
\kwd{$k$-means}
\kwd{clustering}
\kwd{quantization}
\kwd{separation rate}
\kwd{distortion}
\end{keyword}

\begin{comment}
\begin{altabstract}

      Bien qu'utilis\'e comme algorithme de classification, les $k$-moyennes sont \`a la base con\c cus pour fournir un quantifieur, c'est \`a dire un moyen de compresser une distribution de probabilit\'es avec $k$ points. En nous appuyant sur les travaux de \cite{Levrard15} et \cite{Tang16}, nous essayerons d'expliquer en quoi et sous quelles conditions ces deux objectifs \`a priori distincts sont compatibles. Plus pr\'ecis\'ement, nous montrerons que dans le cas o\`u la distribution d'o\`u  sont tir\'es les points satisfait une condition de type marge (baptis\'ee ainsi par analogie avec les  conditions de marge \'etablies en classification supervis\'ee dans \cite{Tsybakov99}), non seulement le minimiseur th\'eorique du risque empirique associ\'e mais aussi le r\'esultat fourni par l'algorithme de Lloyd fournissent d'une part une classification sinon optimale (au sens de \cite{Azizyan13}) du moins pertinente et d'autre part une compression rapide (en la taille de l'\'echantillon) et optimale.

\end{altabstract}

\begin{altkeywords}
\kwd{$k$-means}
\kwd{classification non-supervis\'ee}
\kwd{quantification}
\kwd{vitesse de s\'eparation}
\kwd{distorsion}
\end{altkeywords}

\begin{AMSclass}
\kwd{62H30}
\kwd{62E17}
\end{AMSclass}
\end{comment}
\end{frontmatter}
\renewcommand{\c}{\mathbf{c}}
\newcommand{\m}{\mathbf{m}}
\newcommand{\ckm}{\hat{\c}_{KM,n}}
\section{Introduction}
           Due to its simplicity, $k$-means algorithm, introduced in \cite{McQueen67}, is one of the most popular clustering tool. It has been proved fruitful in many applications: as a last step of a spectral clustering algorithm \cite{Ng01}, for clustering electricity demand curves \cite{Antoniadis11}, clustering DNA microarray data \cite{Tavazoie99,Kim07} or EEG signals \cite{Orhan11} among others. As a clustering procedure, $k$-means intends to group data that are relatively similar into several well-separated classes. In other words, for a data set $\{X_1, \hdots, X_n\}$ drawn in a Hilbert space $\mathcal{H}$, $k$-means outputs $\hat{\mathcal{C}}=(C_1, \hdots, C_k)$ that is a collection of subsets of $\{1, \hdots n\}$.  To assess the quality of such a classification, it is often assumed that a target or natural classification $\mathcal{C}^*=(C^*_1, \hdots, C^*_k)$ is at hand. Then a classification error may be defined by
           \[
           \hat{R}_{classif}(\hat{\mathcal{C}},\mathcal{C}^*) = \inf_{\sigma \in \mathcal{S}_k} \frac{1}{n} \sum_{j=1}^k{ \left | \hat{C}_{\sigma(j)} \cap (C^*_j)^c \right |},
           \]
           where $\sigma$ ranges in the set of $k$-permutations $\mathcal{S}_k$. Such a target classification $\mathcal{C}^*$ may be provided by a mixture assumption on the data, that is hidden i.i.d latent variables $Z_1, \hdots, Z_n \in \{1, \hdots, k \}$ are drawn and only i.i.d $X_i$'s such that $X|Z=j \sim \phi_j$ are observed. This mixture assumption on the data is at the core of model-based clustering techniques, that cast the clustering problem into the density estimation framework. In this setting, efficient algorithms may be designed, provided that further assumptions on the $\phi_j$'s are made. For instance, if the $\phi_j$'s are supposed to be normal densities, this classification problem may be processed in practice using an EM algorithm \cite{Dempster77}. 
           
           However, by construction, $k$-means may rather be thought of as a quantization algorithm. Indeed, it is designed to output an empirical codebook $\hat{\c}_n = (\hat{c}_{n,1}, \hdots, \hat{c}_{n,k})$, that is a $k$-vector of codepoints $\hat{c}_{n,j} \in \mathcal{H}$, minimizing  
           \[
           \hat{R}_{dist}(\c) = \frac{1}{n} \sum_{i=1}^{n} \min_{j=1, \hdots, k} \| X_i - c_j \|^2,
           \]
           over the set of codebooks $\c = (c_1, \hdots, c_k)$. Let $V_i(\c)$ denote the $j$-th Voronoi cell associated with $\c$, that is $V_j(\c) = \left \{ x | \quad  \forall i \neq j \quad \|x-c_j\| \leq \|x-c_i\| \right \}$, and $Q_{\c}$ the function that maps every $V_j(\c)$ onto $\c_{j}$, with ties arbitrarily broken. Then $\hat{R}_{dist}(\c)$ is $P_n \|x - Q_{\c}(x)\|^2$, where $P_n f$ means integration with respect to the empirical distribution $P_n$. From this point of view, $k$-means aims at providing a quantizer $Q_{\hat{\c}_n}$ that realizes a good $k$-point compression of $P$, namely that has a low distortion $R_{dist}(\hat{\c}_n) = P \| x - Q_{\hat{\c}_n}(x)\|^2$. 
           
           This quantization field was originally developed to answer signal compression issues in the late $40$'s (see, e.g. \cite{Gersho91}), but quantization may also be used as a pre-processing step for more involved statistical procedures, such as modeling meta-models for curve prediction by $k$ ``local'' regressions as in \cite{Auder12}. This domain provides most of the theoretical results for $k$-means (see, e.g., \cite{Linder02, Biau08}), assessing roughly that it achieves an optimal $k$-point compression up to $1/\sqrt{n}$ in terms of the distortion $P \|x - Q_{\c}(x)\|^2$, under a bounded support assumption on $P$. Note that other distortion measures can be considered: $L_r$ distances, $r \geq 1$ (see, e.g., \cite{Graf00}), or replacing the squared Euclidean norm by a Bregman divergence (\cite{Fischer10}).
           
           In practice, $k$-means clustering is often performed using Lloyd's algorithm \cite{Lloyd82}. This iterative procedure is based on the following: from an initial codebook $\c^{(0)}$, partition the data according to the Voronoi cells of $\c^{(0)}$, then update the code point by computing the empirical mean over each cell. Since this step can only decrease the empirical distortion $\hat{R}_{dist}$, repeat until stabilization and output $\hat{\c}_{KM,n}$. Note that this algorithm is a very special case of the Classification EM algorithm in the case where the components are assumed to have equal and spherical variance matrices \cite{Celeux92}. As for EM's algorithms, the overall quality of the Lloyd's algorithm output mostly depends on the initialization. Most of the effective implementation use several random initializations, as for $k$-means $++$ \cite{Arthur07}, resulting in an approximation of the true empirical distortion minimizer. This approximation may be build as close as desired (in terms of distortion) to the optimum \cite{Kumar05} with high probability, provided that enough random initializations are allowed. 

Roughly, these approximation results quantify the probability that a random initialization falls close enough to the empirical distortion minimizer $\hat{\c}_n$. It has been recently proved that, provided such a good random initialization is found, if $P_n$ satisfies some additional clusterability assumption, then some further results on the misclassification error of the Lloyd's algorithm output can be stated. For instance, if $ \min_{i \neq j}{\| \hat{\c}_{n,i} - \hat{\c}_{n,j}} \|/\sqrt{n}$ is large enough, then it is proved that $\hat{\c}_{KM,n}$ provides a close classification to $\hat{\c}_n$ \cite{Tang16}. In other words, if $\mathcal{C}(\hat{\c}_{KM,n})$ and $\mathcal{C}(\hat{\c}_{n})$ denote the classifications associated with the Voronoi diagrams of $\hat{\c}_{KM,n}$ and $\hat{\c}_n$, then $\hat{R}_{classif}(\mathcal{C}(\hat{\c}_{KM,n}),\mathcal{C}(\hat{\c}_{n}))$ is small with high probability,  provided that the empirically optimal cluster centers are separated enough.

This \textit{empirical} separation condition has deterministic counterparts that provide classification guarantees for $k$-means related algorithms, under model-based assumptions. Namely, if the sample is drawn according to a subGaussian mixture, then a separation condition on the true means of the mixture entails guarantees for the classification error $\hat{R}_{classif}(\hat{\mathcal{C}},\mathcal{C}^*)$, where $\mathcal{C}^*$ is the latent variable classification \cite{Lu16, Bunea16}. As will be detailed in Section \ref{sec:notation}, it is possible to define a separation condition without assuming that the underlying distribution is a subGaussian mixture (see, e.g., \cite{Levrard13, Levrard15}). This so-called margin condition turns out to be satisfied under model-based clustering assumptions such as quasi-Gaussian mixtures. It also holds whenever the distribution is supported on finitely many points. 

Section \ref{sec:notation} introduces notation and basic structural properties that the margin condition entails for probability distributions. To be more precise, a special attention is paid to the connection between classification and compression such a condition provides. For instance, it is exposed that whenever $P$ satisfies a margin condition, there exist finitely many optimal classifications. Section  \ref{sec:convergence_erm}  focuses on the compression performance that an empirical risk minimizer $\hat{\c}_n$ achieves under this margin condition. We state that fast convergence rates for the distortion are attained, that imply some guarantees on the classification error of $\hat{\c}_n$. At last, Section \ref{sec:convergence_kmeans_algo} intends to provide similar results, both in compression and classification, for an output $\ckm$ of the Lloyd's algorithm. We show that our deterministic separation condition ensures that an empirical one in satisfied with high probability, allowing to connect our approach to that of \cite{Tang16}. On the whole, we prove that $\ckm$ performs almost optimal compression, as well as optimal classification in the framework of \cite{Azizyan13}.

\section{Notation and margin condition}\label{sec:notation}

Throughout this paper, for $M >0$ and $a$ in $\mathcal{H}$, $\mathcal{B}(a,M)$ will denote the closed ball with center $a$ and radius $M$. For a subset $A$ of $\mathcal{H}$, $\bigcup_{a \in A}{\mathcal{B}(a,M)}$ will be denoted by $\mathcal{B}(A,M)$. With a slight abuse of notation, $P$ is said to be $M$-bounded if its support is included in $\mathcal{B}(0,M)$. Furthermore, it will also be assumed that the support of $P$ contains more than $k$ points. Recall that we define the closed $j$-th Voronoi cell associated with $\c = (c_1, \hdots, c_k)$ by $V_j(\c) = \left \{ x | \quad  \forall i \neq j \quad \|x-c_j\| \leq \|x-c_i\| \right \}$.
         
         We let $X_1, \hdots, X_n$ be i.i.d. random variables drawn from a distribution $P$, and introduce the following contrast function, 
         \begin{align*}
         %\label{Definitioncontraste}
          \gamma : \left \{
                 \begin{array}{@{}ccl@{}}
                 \left (\mathcal{H} \right )^k \times \mathcal{H}& \longrightarrow & \mathbb{R} \\
                 \hspace{0.45cm}  (\mathbf{c},x) & \longmapsto & \underset{j=1, \hdots, k}{\min}{\left \| x-c_j \right \|^2}
                 \end{array}
                 \right . ,
          \end{align*}
so that $R_{dist}(\c) = P \gamma (\c,.)$ and $\hat{R}_{dist}(\c) = P_n \gamma (\c,.)$. We let $\mathcal{M}$ denote the set of minimizers of $P \gamma(\c,.)$ (possibly empty). The most basic property of the set of minimizers is its stability with respect to isometric transformations that are $P$-compatible. Namely
\begin{lem}{\cite[Lemma 4.7]{Graf00}}
\label{lem:optimal_invariance}

Let $T$ be an isometric transformation such that $ T \sharp P = P$, where $T \sharp P$ denotes the distribution of $T(X)$, $X \sim P$. Then 
\[
T \left ( \mathcal{M} \right ) = \mathcal{M}.
\]
\end{lem}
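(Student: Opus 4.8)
The plan is to show that the distortion functional $R_{dist}$ is invariant under the natural action of $T$ on codebooks, from which both inclusions $T(\mathcal{M}) \subseteq \mathcal{M}$ and $\mathcal{M} \subseteq T(\mathcal{M})$ follow at once. Extend $T$ to codebooks componentwise by setting $T\c = (Tc_1, \hdots, Tc_k)$. The first step is to record the elementary identity, valid for every codebook $\c$ and every $x \in \mathcal{H}$,
\[
\gamma(T\c, Tx) = \min_{j=1, \hdots, k} \| Tx - Tc_j \|^2 = \min_{j=1, \hdots, k} \| x - c_j \|^2 = \gamma(\c, x),
\]
which is nothing but the fact that $T$ preserves norms of differences (recall that a surjective isometry of a Hilbert space is affine by the Mazur--Ulam theorem, so $T$ makes sense as a bijection; only the distance-preserving property is used here).

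Next, I would use the hypothesis $T \sharp P = P$ in the form of the change-of-variables identity $Pf = P(f \circ T)$, valid for every nonnegative measurable $f : \mathcal{H} \to \mathbb{R}$, which is just the definition of the pushforward $T \sharp P$ combined with approximation by simple functions. Applying it to $f = \gamma(T\c, \cdot)$ and then invoking the identity above gives
\[
R_{dist}(T\c) = P\,\gamma(T\c, \cdot) = P\bigl( x \mapsto \gamma(T\c, Tx) \bigr) = P\bigl( x \mapsto \gamma(\c, x) \bigr) = R_{dist}(\c),
\]
so that $R_{dist} \circ T = R_{dist}$ on the set of all codebooks. In particular, if $\c \in \mathcal{M}$ then $R_{dist}(T\c) = R_{dist}(\c) = \inf_{\c'} R_{dist}(\c')$, hence $T\c \in \mathcal{M}$, which proves $T(\mathcal{M}) \subseteq \mathcal{M}$. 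For the reverse inclusion, note that $T^{-1}$ is again an isometric transformation satisfying $T^{-1} \sharp P = P$, so the same argument applied to $T^{-1}$ yields $T^{-1}(\mathcal{M}) \subseteq \mathcal{M}$, i.e. $\mathcal{M} \subseteq T(\mathcal{M})$. (If $\mathcal{M} = \emptyset$ the statement is trivial.)

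This argument is essentially bookkeeping, so I do not expect a genuine obstacle. The only point deserving a line of justification is the identity $Pf = P(f\circ T)$, which is immediate from the definition of $T \sharp P$, together with the remark that $T$ may be taken bijective so that $T^{-1}$ exists and inherits the hypotheses $T^{-1}\sharp P = P$ needed for the second inclusion.
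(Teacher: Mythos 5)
Your argument is correct and is the standard one: the paper itself gives no proof of this lemma, citing it directly from Graf and Luschgy, and the change-of-variables identity $P(f\circ T)=Pf$ combined with $\gamma(T\c,Tx)=\gamma(\c,x)$ is exactly the mechanism behind the cited result. Your handling of the reverse inclusion via $T^{-1}$ (rather than hoping injectivity alone suffices) is the right care to take, so there is nothing to add.
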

 Other simple properties of $\mathcal{M}$ proceed from the fact that $\c \mapsto \|x-c_j\|^2$ is weakly lower semi-continuous (see, e.g., \cite[Proposition 3.13]{Brezis11}), as stated below.
\begin{prop}{\cite[Corollary 3.1]{Fischer10} and  \cite[Proposition 2.1]{Levrard15}}\label{prop:structure_minimizers} 

Assume that $P$ is $M$-bounded, then
\begin{itemize}
\item[$i)$] $\mathcal{M} \neq \emptyset$.
\item[$ii)$] If $B = \inf_{\c^* \in \mathcal{M}, i \neq j} \|c_i^* - c_j^*\|$, then $B >0$.
\item[$iii)$] If $p_{min} = \inf_{\c^* \in \mathcal{M},i} P(V_i(\c^*))$, then $p_{min} >0$. 
\end{itemize}
\end{prop}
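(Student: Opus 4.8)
For part $i)$ I would run the direct method. Starting from a minimizing sequence $(\c^{(m)})_m$ of $R_{dist}$, I would first replace each code point $c_j^{(m)}$ by its metric projection onto the closed ball $\mathcal{B}(0,M)$; since $\mathrm{supp}(P)\subseteq\mathcal{B}(0,M)$ and the projection onto a closed convex set is $1$-Lipschitz and fixes that set, this does not increase $\|x-c_j^{(m)}\|$ for any $x\in\mathrm{supp}(P)$, hence does not increase $R_{dist}$. Thus one may assume $\c^{(m)}\in\mathcal{B}(0,M)^k$, a bounded closed convex — hence weakly sequentially compact — subset of $\mathcal{H}^k$, and extract $\c^{(m_\ell)}\rightharpoonup\c^*\in\mathcal{B}(0,M)^k$. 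Each $v\mapsto\|x-v\|^2$ being convex and continuous is weakly l.s.c., and a minimum of finitely many weakly l.s.c.\ maps is weakly l.s.c., so $\gamma(\c^*,x)\le\liminf_\ell\gamma(\c^{(m_\ell)},x)$ for every $x$; since $\gamma\ge0$, Fatou's lemma gives $R_{dist}(\c^*)\le\liminf_\ell R_{dist}(\c^{(m_\ell)})=\inf R_{dist}$, so $\c^*\in\mathcal{M}$.

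The heart of parts $ii)$ and $iii)$ is the elementary fact that an optimal codebook wastes no code point. Fix $\c^*\in\mathcal{M}$ and let $A_1,\dots,A_k$ be the partition induced by $Q_{\c^*}$, so $A_j\subseteq V_j(\c^*)$; I would show that $P(A_j)>0$ and that $c_j^*$ is the barycenter of $P$ restricted to $A_j$, which forces $c_j^*\in\overline{\mathrm{conv}(\mathrm{supp}(P))}\subseteq\mathcal{B}(0,M)$. First, $R_{dist}(\c^*)>0$, for otherwise $P$ would be carried by the at most $k$ points $c_1^*,\dots,c_k^*$, against the assumption that $\mathrm{supp}(P)$ has more than $k$ points. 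If some $P(A_j)=0$, deleting $c_j^*$ leaves $\gamma$ unchanged $P$-a.e., and reinserting this free code point at a point $x_0\in\mathrm{supp}(P)$ with $\gamma(\c^*,x_0)>R_{dist}(\c^*)/2$ — such an $x_0$ exists because $\{\gamma(\c^*,\cdot)>R_{dist}(\c^*)/2\}$ has positive $P$-mass — strictly lowers $R_{dist}$, by continuity of $x\mapsto\gamma(\c^*,x)$ and a small-ball argument, a contradiction. Hence all $P(A_j)>0$, and then minimality pins each $c_j^*$ to the barycenter of $P|_{A_j}$. The same reinsertion argument, applied to a duplicated code point of an optimal $(k-1)$-codebook, shows $R_{k-1}^*>R_k^*$, where $R_\ell^*$ denotes the optimal $\ell$-point distortion (an optimal $(k-1)$-codebook exists by the argument of part $i)$ with $k-1$ centers). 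For a fixed $\c^*$ this already yields $\min_{i\ne j}\|c_i^*-c_j^*\|>0$ (two equal code points would empty some $A_j$) and $\min_iP(V_i(\c^*))\ge\min_iP(A_i)>0$; it remains to make these bounds uniform over $\mathcal{M}$.

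For the uniformity, the tempting route — pick $\c^{(m)}\in\mathcal{M}$ realizing the infimum, extract a weak limit in $\mathcal{M}$ (legitimate since code points stay in $\mathcal{B}(0,M)$), and conclude — fails in infinite dimension, because $\c\mapsto P(V_i(\c))$ is neither weakly continuous nor weakly l.s.c.: a point $x$ may lie in $V_i(\c^{(m)})$ for all $m$ yet not in the cell of the weak limit, since $\|c_i^{(m)}\|^2$ need not converge to the squared norm of the limit. This is the main obstacle, and I would bypass it by comparing with the $(k-1)$-point problem. If $B:=\inf_{\c^*\in\mathcal{M},\,i\ne j}\|c_i^*-c_j^*\|=0$, take $\c^{(m)}\in\mathcal{M}$ and indices $i_m\ne j_m$ with $\delta_m:=\|c_{i_m}^{(m)}-c_{j_m}^{(m)}\|\to0$; deleting $c_{j_m}^{(m)}$ and bounding its Voronoi contribution by assignment to $c_{i_m}^{(m)}$ yields a $(k-1)$-tuple whose distortion is at most $R_k^*+\int_{A_{j_m}^{(m)}}(\|x-c_{i_m}^{(m)}\|^2-\|x-c_{j_m}^{(m)}\|^2)\,dP\le R_k^*+(4M\delta_m+\delta_m^2)P(A_{j_m}^{(m)})$, using $\|c_\ell^{(m)}\|\le M$ from the previous paragraph. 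Hence $R_{k-1}^*\le R_k^*+4M\delta_m+\delta_m^2\to R_k^*$, contradicting $R_{k-1}^*>R_k^*$. Likewise, if $p_{min}:=\inf_{\c^*\in\mathcal{M},\,i}P(V_i(\c^*))=0$, take $\c^{(m)}\in\mathcal{M}$ with $P(V_{i_m}(\c^{(m)}))\to0$, so $p_m:=P(A_{i_m}^{(m)})\le P(V_{i_m}(\c^{(m)}))\to0$; deleting $c_{i_m}^{(m)}$ and reassigning $A_{i_m}^{(m)}$ to the nearest remaining code point raises the distortion by at most $\int_{A_{i_m}^{(m)}}\min_{\ell\ne i_m}\|x-c_\ell^{(m)}\|^2\,dP\le4M^2p_m$, giving again $R_{k-1}^*\le R_k^*+4M^2p_m\to R_k^*$, a contradiction.

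In short, the only genuinely delicate point is the uniformity of the bounds in $ii)$ and $iii)$, which is why the argument is routed through the strict gap $R_{k-1}^*>R_k^*$ together with a quantitative merging/deletion estimate rather than through a compactness argument on $\mathcal{M}$ itself; the remaining ingredients (existence, the centroid condition, the strict gap) are the standard $k$-means stationarity toolbox, and I would expect the write-up to cite \cite{Fischer10,Levrard15} for them.
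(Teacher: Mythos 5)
Your proof is correct, and since the paper gives no argument of its own for this proposition (it simply defers to \cite[Corollary 3.1]{Fischer10} and \cite[Proposition 2.1]{Levrard15}), the relevant comparison is with those references: your route — weak sequential compactness of $\mathcal{B}(0,M)^k$ plus weak lower semicontinuity of $\c\mapsto\gamma(\c,x)$ for existence, then the strict gap $R_{k-1}^*>R_k^*$ combined with the quantitative merging/deletion estimates to make the bounds on $B$ and $p_{min}$ uniform over $\mathcal{M}$ — is exactly the argument used there. In particular, you correctly identify why a naive compactness argument on $\mathcal{M}$ fails in infinite dimension and route around it the same way the cited proofs do, so nothing further is needed.
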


Proposition \ref{prop:structure_minimizers} ensures that there exist minimizers of the true and empirical distortions $R_{dist}$ and $\hat{R}_{dist}$.
 In what follows, $\hat{\c}_n$ and $\c^*$ will denote minimizers of $\hat{R}_{dist}$ and $R_{dist}$ respectively. A basic property of distortion minimizers, called the centroid condition, is the following.
\begin{prop}{\cite[Theorem 4.1]{Graf00}}
\label{prop:centroid_condition}
  If $\c^*$ $\in$ $\mathcal{M}$, then, for all $j=1, \hdots, k$,
  \[
  {P(V_j(\c^*))} c_j^* = {P \left ( x \mathbbm{1}_{V_j(\c^*)}(x) \right )}.
  \]
  As a consequence, for every $\c \in \mathcal{H}^k$  and $\c^* \in \mathcal{M}$,
  \[
  R_{dist}(\c) - R_{dist}(\c^*) \leq \|\c - \c^*\|^2.
  \]
\end{prop}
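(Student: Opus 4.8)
The plan is to establish the centroid condition first, by a one--coordinate perturbation argument, and then to read off the quadratic upper bound as a purely algebraic consequence of it. Throughout I use that $P$ has finite second moment (it is $M$-bounded, which is also what guarantees $\mathcal{M}\neq\emptyset$), so that all the integrals below are finite.

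\emph{Centroid condition.} Fix $\c^*\in\mathcal{M}$ and an index $j$, and let $\c'$ be the codebook obtained from $\c^*$ by replacing its $j$-th coordinate $c_j^*$ with an arbitrary point $c\in\mathcal{H}$, keeping the other coordinates. Since the Voronoi cells of $\c^*$ can be turned into a measurable partition of $\mathcal{H}$ (breaking ties on boundaries consistently), one has $R_{dist}(\c^*)=\sum_{i=1}^k P\!\left(\|x-c_i^*\|^2\mathbbm{1}_{V_i(\c^*)}(x)\right)$, while evaluating the distortion of $\c'$ on this same, now suboptimal, partition gives $R_{dist}(\c')\le \sum_{i\neq j} P\!\left(\|x-c_i^*\|^2\mathbbm{1}_{V_i(\c^*)}(x)\right)+P\!\left(\|x-c\|^2\mathbbm{1}_{V_j(\c^*)}(x)\right)$. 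Subtracting and using $R_{dist}(\c^*)\le R_{dist}(\c')$, all terms with $i\neq j$ cancel, leaving $P\!\left(\|x-c_j^*\|^2\mathbbm{1}_{V_j(\c^*)}(x)\right)\le P\!\left(\|x-c\|^2\mathbbm{1}_{V_j(\c^*)}(x)\right)$ for every $c\in\mathcal{H}$. Expanding the square, the right-hand side equals $P\!\left(\|x\|^2\mathbbm{1}_{V_j(\c^*)}(x)\right)-2\langle c, P\!\left(x\mathbbm{1}_{V_j(\c^*)}(x)\right)\rangle+\|c\|^2 P(V_j(\c^*))$, a convex quadratic in $c$ minimized at $c=c_j^*$; cancelling its gradient at $c_j^*$ yields $P(V_j(\c^*))\,c_j^*=P\!\left(x\mathbbm{1}_{V_j(\c^*)}(x)\right)$, the identity being trivial when $P(V_j(\c^*))=0$.

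\emph{Quadratic bound.} For an arbitrary $\c\in\mathcal{H}^k$, bound $R_{dist}(\c)$ from above by evaluating it on the Voronoi partition of $\c^*$: $R_{dist}(\c)\le \sum_{j=1}^k P\!\left(\|x-c_j\|^2\mathbbm{1}_{V_j(\c^*)}(x)\right)$. On each cell write $\|x-c_j\|^2=\|x-c_j^*\|^2+2\langle x-c_j^*, c_j^*-c_j\rangle+\|c_j^*-c_j\|^2$ and integrate against $P\mathbbm{1}_{V_j(\c^*)}$. The centroid condition makes the cross term vanish, since $P\!\left((x-c_j^*)\mathbbm{1}_{V_j(\c^*)}(x)\right)=P\!\left(x\mathbbm{1}_{V_j(\c^*)}(x)\right)-c_j^*P(V_j(\c^*))=0$, whence $P\!\left(\|x-c_j\|^2\mathbbm{1}_{V_j(\c^*)}(x)\right)=P\!\left(\|x-c_j^*\|^2\mathbbm{1}_{V_j(\c^*)}(x)\right)+\|c_j^*-c_j\|^2 P(V_j(\c^*))$. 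Summing over $j$, recognizing $\sum_j P\!\left(\|x-c_j^*\|^2\mathbbm{1}_{V_j(\c^*)}(x)\right)=R_{dist}(\c^*)$, and using $P(V_j(\c^*))\le 1$ gives $R_{dist}(\c)-R_{dist}(\c^*)\le\sum_{j=1}^k\|c_j^*-c_j\|^2=\|\c-\c^*\|^2$.

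I expect the only delicate point to be bookkeeping rather than mathematics: one must fix, once and for all, a genuine measurable partition of $\mathcal{H}$ refining the Voronoi diagram of $\c^*$ so that the identity $R_{dist}(\c^*)=\sum_i P(\|x-c_i^*\|^2\mathbbm{1}_{V_i(\c^*)})$ and the suboptimality bounds $R_{dist}(\cdot)\le\sum_i P(\|x-c_i\|^2\mathbbm{1}_{V_i(\c^*)})$ hold simultaneously with the same indicator functions. Once that is in place, everything reduces to the elementary fact that a quadratic is minimized at the conditional mean, together with the vanishing of first-order terms that this minimality forces.
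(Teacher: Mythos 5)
Your argument is the standard one, and it is essentially the proof of the cited source (the paper itself gives no proof of this proposition, it simply invokes \cite[Theorem 4.1]{Graf00}): a one-coordinate perturbation plus minimization of a convex quadratic yields the centroid condition, and the bias--variance decomposition over the cells of $\c^*$, with the cross term killed by that condition and $P(V_j(\c^*))\leq 1$, yields the quadratic bound. Both halves are correct as computations.

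There is one loose end you gesture at but do not close. Your perturbation argument necessarily runs over a disjointified version of the Voronoi diagram (the cells $W_j(\c^*)$ in the paper's notation), since the decomposition $R_{dist}(\c^*)=\sum_i P\bigl(\|x-c_i^*\|^2\mathbbm{1}_{\cdot}\bigr)$ fails for the overlapping closed cells if the boundaries carry mass; what you actually prove is therefore $P(W_j(\c^*))\,c_j^*=P\bigl(x\mathbbm{1}_{W_j(\c^*)}(x)\bigr)$. The proposition asserts the identity for the closed cells $V_j(\c^*)$, and the two statements coincide only once one knows $P(N(\c^*))=0$ --- which is not automatic from your argument and is not merely bookkeeping. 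The standard way to close this (as in Graf--Luschgy) is a reassignment argument: if some bisector $V_i(\c^*)\cap V_j(\c^*)$ had positive mass, moving that mass from cell $i$ to cell $j$ leaves the distortion unchanged but displaces the centroid of the enlarged cell away from $c_j^*$ (the bisecting hyperplane does not contain $c_j^*$), so recentering would strictly decrease the distortion, contradicting optimality. Adding that one step makes your proof complete; note also that your second half is unaffected, since it can be run entirely with the $W_j(\c^*)$'s.
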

  A direct consequence of Proposition \ref{prop:centroid_condition} is that the boundaries of the Voronoi diagram $V(\c)$ has null $P$-measure. Namely, if 
  \[
  N(\c^*) = \bigcup_{ i \neq j} \left \{ x | \quad \|x-c_i^*\|= \|x - c_j^* \| \right \},
  \]
  then $P(N(\c^*)) = 0$. Hence the quantizer $Q_{\c^*}$ that maps $V_j(\c^*)$ onto $c_j^*$ is well-defined $P$ a.s. For a generic $\c$ in $\mathcal{B}(0,M)$, this is not the case. Thus, we adopt the following convention: $W_1(\c) = V_1(\c)$, $W_2(\c) = V_2(\c) \setminus W_1(\c)$, $\hdots$, $W_k(\c) = V_k(\c) \setminus W_{k-1}(\c)$, so that the $W_j(\c)$'s form a tessellation of $\mathbb{R}^d$. The quantizer $Q_{\c}$ can now be properly defined as the map that sends each $W_j(\c)$ onto $c_j$. As a remark, if $Q$ is a $k$-points quantizer, that is a map from $\mathbb{R}^d$ with images $c_1, \hdots, c_k$, then it is immediate that $R_{dist}(Q) \geq R_{dist}(Q_\c)$. This shows that optimal quantizers in terms of distortion are to be found among nearest-neighbor quantizers of the form $Q_\c$, $\c$ in $(\mathbb{R}^d)^k$. 
  
     An other key parameter for quantization purpose is the separation factor, that seizes the difference between local and global minimizers in terms of distortion.
     
     \begin{Def}\label{def:separation_factor} Denote by $\bar{\mathcal{M}}$ the set of codebooks that satisfy
     \begin{align*}
     P  \left ( W_i(\c) \right )c_i = P \left ( x \mathbbm{1}_{W_i(\c)}(x) \right ),
     \end{align*}
     for any $i = 1, \hdots, k$. Let $\varepsilon >0$, then $P$ is said to be $\varepsilon$-separated if 
     \[
     \inf_{ \c \in \bar{\mathcal{M}} \setminus \mathcal{M}} R_{dist}(\c) - R_{dist}(\c^*) \geq \varepsilon,
     \]
     where $\c^* \in \mathcal{M}$.     
     \end{Def}
The separation factor $\varepsilon$ quantifies how difficult the identification of global minimizer might be. Its empirical counterpart in terms of $\hat{R}_{dist}$ can be thought of as the minimal price one has to pay when the Lloyd's algorithm ends up at a stationary point that is not an optimal codebook.

    Note that local minimizers of the distortion satisfy the centroid condition, as well as $p$-optimal codebooks, for $p<k$. Whenever $\mathcal{H}=\mathbb{R}^d$, $P$ has a density and $P\|x\|^2 < \infty$, it can be proved that the set of minimizers of $R_{dist}$ coincides with the set of codebooks satisfying the centroid condition, also called stationary points (see, e.g., Lemma A of \cite{Pollard82}). However, this result cannot be extended to non-continuous distributions, as proved in Example 4.11 of \cite{Graf00}.
   
  Up to now, we only know that the set of minimizers of the distortion $\mathcal{M}$ is non-empty. From the compression point of view, this is no big deal if $\mathcal{M}$ is allowed to contain an infinite number of optimal codebooks. From the classification viewpoint, such a case may be interpreted as a case where $P$ carries no natural classification of $\mathcal{H}$. For instance, if $\mathcal{H}=\mathbb{R}^2$ and $P \sim \mathcal{N}(0,I_2)$, then easy calculation and Lemma \ref{lem:optimal_invariance} show that $\mathcal{M} = \left \{ (c_1,c_2) | \quad c_2 = -c_1, \quad \|c_1\|= 2/\sqrt{2\pi} \right \}$, hence $\left | \mathcal{M} \right | = + \infty$. In this case, it seems quite hard to define a natural classification of the underlying space, even if the $\c^*$'s are clearly identified. The following margin condition is intended to depict situations where a natural classification related with $P$ exists.
   \begin{Def}[Margin condition]\label{def:margincondition}
         A distribution $P$ satisfies a margin condition with radius $r_0 >0$ if and only if
				\begin{itemize}
				\item[$i)$] $P$ is $M$-bounded,
				\item[$ii)$] for all $0 \leq t \leq r_0$,
					\begin{align}\label{majorationkappa}
					 \sup_{\c^* \in \mathcal{M}} P \left ( \mathcal{B}(N(\c^*),t) \right ):=p(t)  \leq \frac{B p_{min}}{128 M^2}t.
					\end{align}
				\end{itemize}
         \end{Def}
         Since $p(2M)=1$, such a $r_0$ must satisfy $r_0 < 2M$. The constant $1/128$ in \eqref{majorationkappa} is not optimal and should be understood as a small enough absolute constant. The margin condition introduced above asks that every classification associated with an optimal codebook $\c^*$ is a somehow natural classification. In other words $P$ has to be concentrated enough around each $c_j^*$. This margin condition may also be thought of as a counterpart of the usual margin conditions for supervised learning stated in \cite{Tsybakov99}, where the weight of the neighborhood of the critical area $\left \{ x | \quad P (Y=1|X=x) =1/2 \right \}$ is controlled.

         The scope of the margin condition allows to deal with several very different situations in the same way, as illustrated below.
         \subsection{Some instances of 'natural classifications'}\label{subsec:MC_instances}
         
\noindent\textbf{Finitely supported distributions}:         
         If $P$ is supported on finitely many points, say $x_1, \hdots, x_r$. Then, $\mathcal{M}$ is obviously finite. Since, for all $\c^*$ in $\mathcal{M}$, $P(N(\c^*))=0$, we may deduce that $\inf_{\c^*, j} d(x_j,N(\c^*)) = r_0 > 0$. Thus, $p(t)=0$ for $t\leq r_0$, and $P$ satisfies a margin condition with radius $r_0$.   
     \medskip    
                  
\noindent\textbf{Truncated Gaussian mixtures}:        A standard assumption assessing the existence of a natural classification is the Gaussian mixture assumption on the underlying distribution, that allows to cast the classification issue into the density estimation framework. Namely, for $\mathcal{H}= \mathbb{R}^d$, $\tilde{P}$ is a Gaussian mixture if it has density
        \[
                 \tilde{f}(x) = \sum_{i=1}^{k}{\frac{\theta_i}{ (2 \pi)^{d/2} \sqrt{ \left | \Sigma_i \right | }}e^{-\frac{1}{2}(x-m_i)^t \Sigma_i^{-1} (x-m_i)}},
                 \]
                 where the $\theta_i$'s denote the weights of the mixture, the $m_i$'s the means and the $\Sigma_i$'s are the $d \times d$ covariance matrices of the components. 
                 
                 Also denote by $\tilde{B} = \min_{i \neq j}{\|m_i - m_j\|}$ the minimum distance between two components, and by $\sigma^2$ and $\sigma_-^2$ the largest and smallest eigenvalues of the $\Sigma_i$'s. It seems natural that the larger $\tilde{B}$ is compared to $\sigma$, the easier the classification problem would be. To this aim, we may define, for $\mathcal{C}$ and $\mathcal{C}^*$ two classifications the classification risk as the probability that a random point is misclassified, that is 
       \begin{align*}
       R_{classif}(\mathcal{C},\mathcal{C}^*) = \inf_{\sigma \in \mathcal{S}_k}{P \left ( \bigcup_{j =1}^{k} C_{\sigma(j)} \cap (C^*_j)^c \right )}.
       \end{align*}
        In the case $k=2$, $\theta_i = 1/2$ and $\Sigma_i = \sigma^2 I_d$,  \cite[Theorem 1 and 2]{Azizyan13} show that
                 \[
                 \inf_{\hat{C}} \sup_{ \sigma/\tilde{B} \leq \kappa} \mathbb{E} R_{classif}(\hat{C},\mathcal{C}^*) \asymp \kappa^2 \sqrt{\frac{d}{n}},
                 \]
                 up to $\log$ factors, where $\mathcal{C}^*$ denote the Bayes classification. Note that in this case, the Bayes classification is given by $C^*_j = V_j(\m)$, that is the Voronoi diagram associated with the vector of means.                
       Similarly we will show that for $\sigma/\tilde{B}$ small enough, a margin condition is satisfied.
                 
                 Since Gaussian mixture have unbounded distributions, we may define a truncated Gaussian mixture distribution by its density of the form
                 \[
                 \tilde{f}(x) = \sum_{i=1}^{k}{\frac{\theta_i}{ (2 \pi)^{d/2} N_i \sqrt{ \left | \Sigma_i \right | }}e^{-\frac{1}{2}(x-m_i)^t \Sigma_i^{-1} (x-m_i)}} \mathbbm{1}_{\mathcal{B}(0,M)}(x),
                \]
                where $N_i$ denotes a normalization constant for each truncated Gaussian variable. To avoid boundary issues, we will assume that $M$ is large enough so that $M \geq 2 \sup_{j} \|m_j\|$. On the other hand, we also assume that $M$ scales with $\sigma$, that is $M \leq c \sigma $, for some constant $c$. In such a setting, the following hold.
                \begin{prop}\label{prop:GM_margincondition}
                 Denote by $\eta = \min_{i}{1 - N_i}$. Then there exists constants $c_1(k,\eta,d,\theta_{min})$ and $c_2(k,\eta,d,\theta_{min},c_-,c)$ such that
                \begin{itemize}
                \item If $\sigma/\tilde{B} \leq \frac{1}{16 c_1 \sqrt{d}}$, then for all $j$ and $\c^*$ in $\mathcal{M}$, $\|c^*_j - m_j\| \leq c_1 \sigma \sqrt{d}$. 
                \item Assume that $\sigma_- \geq c_- \sigma$, for some constant $c_-$. If $\sigma/\tilde{B} \leq c_2$, then $\c^*$ is unique and $P$ satisfies a margin condition with radius $\tilde{B}/8$.
                \end{itemize}
                A possible choice of $c_1$ is $\sqrt{\frac{k2^{d+2}}{(1-\eta)\theta_{min}}}$.
                \end{prop}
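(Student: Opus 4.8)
The plan is to control, for a truncated Gaussian mixture, how far any optimal codebook $\c^*$ can be from the vector of means $\m$, and then to deduce that the Voronoi boundaries $N(\c^*)$ are confined to a thin region whose $P$-mass decays linearly, which is exactly \eqref{majorationkappa}.

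\smallskip

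\noindent\textbf{Step 1: the first bullet, localization of $\c^*$ near $\m$.} I would first compare the distortion of an arbitrary codebook to that of the ``trivial'' codebook $\m=(m_1,\hdots,m_k)$. Since $P$ is the truncation to $\mathcal{B}(0,M)$ of the genuine mixture $\tilde P$, and $M\geq 2\sup_j\|m_j\|$, the conditional distribution of $X$ given component $i$ has mean close to $m_i$ (the truncation mass $1-N_i$ is at most $\eta$) and second moment of order $\sigma^2 d$. Hence $R_{dist}(\m)\leq C\sigma^2 d$ for an explicit $C=C(k,\eta,\theta_{min})$, so $R_{dist}(\c^*)\leq C\sigma^2 d$ as well. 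Now suppose, for contradiction, that some $c^*_j$ is at distance $\geq c_1\sigma\sqrt d$ from every $m_i$; I would show that then a whole cluster $i$ is ``left uncovered'', i.e. there is a component whose conditional mass (at least $\theta_{min}(1-\eta)/\,$something) sits at distance $\gtrsim \tilde B$ from every codepoint, contributing $\gtrsim \theta_{min}(1-\eta)\tilde B^2$ to the distortion. Choosing $c_1=\sqrt{k2^{d+2}/((1-\eta)\theta_{min})}$ makes $c_1^2\sigma^2 d \gtrsim R_{dist}(\c^*)$ whenever $\sigma/\tilde B\leq 1/(16c_1\sqrt d)$, a contradiction; so each $c^*_j$ is within $c_1\sigma\sqrt d$ of some $m_i$. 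A counting argument (there are $k$ codepoints and $k$ means, and two codepoints cannot both be $c_1\sigma\sqrt d$-close to the same $m_i$ once $c_1\sigma\sqrt d\ll\tilde B$) then gives a bijection, i.e. $\|c^*_j-m_j\|\leq c_1\sigma\sqrt d$ after relabelling, which also yields uniqueness of $\c^*$ in the regime of the second bullet (two optimal codebooks would both be pinned to $\m$, and a strict-convexity / centroid-condition argument à la Proposition \ref{prop:centroid_condition} rules out a continuum; more simply, any $\c^*\in\mathcal M$ has Voronoi cells that are small perturbations of those of $\m$, hence the centroid condition determines $\c^*$ via a contraction argument).

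\smallskip

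\noindent\textbf{Step 2: the margin condition.} With $\|c^*_j-m_j\|\leq c_1\sigma\sqrt d\leq \tilde B/16$, the Voronoi diagram of $\c^*$ is a small perturbation of that of $\m$: the bisecting hyperplane between $c^*_i$ and $c^*_j$ lies within $O(c_1\sigma\sqrt d)$ of the bisecting hyperplane between $m_i$ and $m_j$. Consequently $\mathcal B(N(\c^*),t)$ is contained in the $t+O(c_1\sigma\sqrt d)$-neighborhood of $N(\m)=\bigcup_{i\neq j}\{\|x-m_i\|=\|x-m_j\|\}$. On each such slab, the density of a single truncated Gaussian component is bounded: the key computation is that for a $\mathcal N(m_i,\Sigma_i)$ density integrated over a slab of width $w$ around a hyperplane at distance $\gtrsim\tilde B/2$ from $m_i$, the mass is at most $(w/\sigma_-)e^{-c\tilde B^2/\sigma^2}$, i.e. exponentially small in $(\tilde B/\sigma)^2$ and linear in the width $w$. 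Summing over the $\binom k2$ pairs and the $k$ components, and using $\sigma_-\geq c_-\sigma$ and $M\leq c\sigma$, gives $p(t)\leq c'(k,d,\eta,\theta_{min},c_-,c)\,(t+c_1\sigma\sqrt d)\,e^{-c''\tilde B^2/\sigma^2}/\sigma$. For $t\leq r_0=\tilde B/8$ and $\sigma/\tilde B$ small, the additive term $c_1\sigma\sqrt d$ is absorbed into $t$ (up to constants), so $p(t)\leq c'''\,(t/\sigma)e^{-c''\tilde B^2/\sigma^2}$. Finally one needs this to be $\leq Bp_{min}t/(128M^2)$; since $M\leq c\sigma$, it suffices that $c'''e^{-c''\tilde B^2/\sigma^2}\leq Bp_{min}/(128c^2\sigma^2/\sigma)\cdot\sigma = Bp_{min}/(128c^2\sigma)$, and because $B\gtrsim \tilde B$ and $p_{min}\gtrsim\theta_{min}(1-\eta)$ are bounded below in terms of the fixed parameters while $\sigma/\tilde B\to0$, the exponential beats everything: this defines the threshold $c_2$.

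\smallskip

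\noindent\textbf{Main obstacle.} The delicate point is Step 1: turning ``$R_{dist}(\c^*)$ is small'' into the quantitative statement ``$\c^*$ is $c_1\sigma\sqrt d$-close to $\m$'' with the explicit constant. One must carefully handle the truncation (controlling $N_i$, the conditional means and second moments, and making sure $M\geq2\sup_j\|m_j\|$ keeps each $m_i$ well inside the support) and the combinatorial argument that no mean is left without a nearby codepoint — the naive bound ``each uncovered component costs $\theta_i\tilde B^2/4$'' must be made robust to the possibility that the lost mass is only the non-truncated part. Step 2 is then essentially a Gaussian tail estimate over slabs, which is routine once the geometry of $N(\c^*)$ versus $N(\m)$ is pinned down by Step 1.
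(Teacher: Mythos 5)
Your plan is correct and follows essentially the same route as the paper: the paper's proof applies Lemma \ref{lem:Gaussiancalculus}, whose three estimates (the bound $R(\m)\leq \sigma^2 k\theta_{max}d/(1-\eta)$, the distortion lower bound for codebooks with a codepoint $\tau\tilde{B}$-far from every mean, and the linear-in-$t$, exponentially-small-in-$(\tilde{B}/\sigma)^2$ bound on $p(t)$) are exactly the three facts you propose to derive, combined in the same order and with the same constant $c_1$. The only cosmetic difference is your uniqueness argument (a contraction/centroid fixed-point idea versus the paper's use of the local convexity from Proposition \ref{prop:properties_under_MC} once the margin condition is established), which does not change the overall strategy.
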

                
                A short proof is given in Section \ref{subsec:proof_prop_GM_margincondition}. Proposition \ref{prop:GM_margincondition} entails that (truncated) Gaussian mixtures are in the scope of the margin condition, provided that the components are well-separated. As will be detailed in Section \ref{sec:convergence_kmeans_algo}, this implies that under the conditions of Proposition \ref{prop:GM_margincondition} the classification error of the outputs of the $k$-means algorithm is of order $\kappa ^2 \sqrt{d/n}$ as in \cite{Azizyan13}.  
                
                \subsection{An almost necessary condition}
                
                As described above, if the distribution $P$ is known to carry a natural classification, then it is likely that it satisfies a margin condition. It is proved below that conversely an optimal codebook $\c^*$ provides a not so bad classification, in the sense that the mass around $N(\c^*)$ must be small. To this aim, we introduce, for $\c$ in $\mathcal{B}(0,M)^k$, and $i\neq j$, the following mass
                \[
                p_{ij}(\c,t) = P \left ( \left \{ x | \quad 0 \leq \left\langle x - \frac{c_i + c_j}{2}, \frac{c_j-c_i}{r_{i,j}(\c)} \right\rangle \leq t \right \} \cap V_j(\c) \right ),
                \]
                where $r_{i,j}(\c) = \|c_i - c_j\|$. It is straightforward that $P(\mathcal{B}(N(\c),t)) \leq \sum_{i \neq j} p_{i,j}(\c,t)$. The necessary condition for optimality in terms of distortion is the following.
                \begin{prop}\label{prop:CN_optimality}
                Suppose that $\c^* \in \mathcal{M}$. Then, for all $i \neq j$ and $t < 1/2$, 
                \begin{align*}
                \int_{0}^{t r_{i,j}(\c^*)}{p_{i,j}(\c^*,s)ds} &\leq  2 t^2 r_{i,j}(\c^*) \left [ \frac{p_i(\c^*)}{1-2t} \wedge \frac{p_j(\c^*)}{1+2t} \right ], \\
                \int_{0}^{t r_{i,j}(\c^*)}p_{i,j}(\c^*,s)ds & \leq t^2 r_{i,j}(\c^*) \frac{p_i(\c^*) + p_j(\c^*)}{2},
                \end{align*}
                where $p_j(\c^*)$ denotes $P(V_j(\c^*))$.
                \end{prop}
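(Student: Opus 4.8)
The plan is to test optimality of $\c^*$ against a small family of competitor codebooks differing from $\c^*$ only at coordinates $i$ and $j$, and to annihilate the resulting first‑order terms by the centroid condition (Proposition \ref{prop:centroid_condition}). Write $r = r_{i,j}(\c^*)$, $m = (c_i^*+c_j^*)/2$, $e = (c_j^*-c_i^*)/r$ and $y(x) = \langle x - m, e\rangle$. Since $P(N(\c^*))=0$ one has $p_{i,j}(\c^*,s) = P\bigl(W_j(\c^*)\cap\{0\leq y\leq s\}\bigr)$, and by Fubini
\[
\int_0^{\tau} p_{i,j}(\c^*,s)\,ds \;=\; \int_{W_j(\c^*)\cap\{0\leq y\leq\tau\}}\bigl(\tau - y(x)\bigr)\,dP(x),
\]
which is exactly the quantity to be bounded. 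I shall use repeatedly the polarization identity $\|x-a\|^2 - \|x-b\|^2 = 2\langle b-a,\,x-\tfrac{a+b}{2}\rangle$ and the centroid identities $\int_{W_i(\c^*)}(x-c_i^*)\,dP = \int_{W_j(\c^*)}(x-c_j^*)\,dP = 0$, together with $p_i = P(V_i(\c^*))=P(W_i(\c^*))$ and likewise $p_j$.

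The common mechanism is as follows. Let $\c'$ be obtained from $\c^*$ by moving $c_i^*$ and/or $c_j^*$ along $e$, and bound $R_{dist}(\c')$ by the cost of the partition that keeps $W_l(\c^*)$ on $c_l^*$ for $l\neq i,j$, keeps $W_i(\c^*)$ on the (possibly moved) $i$‑th centre, assigns the slab $S_\tau := W_j(\c^*)\cap\{0\leq y\leq\tau\}$ to the $i$‑th centre, and assigns $W_j(\c^*)\setminus S_\tau$ to the $j$‑th centre. Subtracting the exact value $R_{dist}(\c^*)=\sum_l\int_{W_l(\c^*)}\|x-c_l^*\|^2\,dP$ cancels the cells $l\neq i,j$; expanding the rest with the polarization identity and invoking the two centroid identities makes the linear‑in‑step terms disappear, and what survives is a nonnegative quantity of the form $(\text{second order in the step}) - (\text{positive constant})\cdot\int_0^\tau p_{i,j}(\c^*,\cdot)$, provided the step is tied to $\tau$ so that the reassigned slab sits exactly below the new bisector of the $i$‑th and $j$‑th centres.

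I would run this three times. First, with $c_i' = c_i^*+he$ and $\tau = h/2$ the surviving inequality is $2(r-h)\int_0^{h/2}p_{i,j}(\c^*,\cdot)\leq h^2 p_i$; taking $h = 2tr$ — legitimate precisely because $t<1/2$ forces $h<r$ — gives $\int_0^{tr}p_{i,j}(\c^*,\cdot)\leq 2t^2 r\,p_i/(1-2t)$. Second, with $c_j' = c_j^*+he$, $\tau = h/2$ and the slab assigned to the \emph{unchanged} $c_i^*$, the centroid identity on $W_j(\c^*)$ turns the surviving inequality into $2(r+h)\int_0^{h/2}p_{i,j}(\c^*,\cdot)\leq h^2 p_j$, whence the bound with $p_j/(1+2t)$; the two bounds together give the first displayed inequality. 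Third, with $c_i' = c_i^*+he$ \emph{and} $c_j' = c_j^*+he$ simultaneously — a rigid translation of the pair, whose $ij$‑bisector is $\{y = h\}$ — and $\tau = h$, the slab $S_h$ being assigned to $c_i'$, the centroid identities on $W_i(\c^*)$ and on $W_j(\c^*)$ reduce the inequality to $2r\int_0^{h}p_{i,j}(\c^*,\cdot)\leq h^2(p_i+p_j)$, and $h = tr$ yields the second displayed inequality.

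The delicate point is the design of the three perturbations — in particular realizing that the simultaneous translation in the third one is what produces the symmetric factor $(p_i+p_j)/2$, and that a single one‑centre move cannot — together with keeping the signs straight in the polarization/bisector bookkeeping, so that the reassignment term collapses to $-c\int_0^\tau p_{i,j}(\c^*,\cdot)$ rather than to an uninformative $+c\int_{S_\tau}y\,dP$. Once the right codebooks and slabs are in place, the remainder is elementary algebra and the substitutions $h = 2tr$ and $h = tr$.
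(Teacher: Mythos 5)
Your proof is correct and follows essentially the same route as the paper's: perturb $c_i^*$ and/or $c_j^*$ along the segment joining them, reassign the slab $V_j(\c^*)\cap\{0\leq y\leq \tau\}$, use the centroid condition to reduce the excess distortion to a second-order term minus a positive multiple of $\int_0^\tau p_{i,j}(\c^*,\cdot)$, and conclude from the optimality of $\c^*$ among all $k$-point quantizers. Your three perturbations (reparametrized via $h=2tr_{i,j}$, resp.\ $h=tr_{i,j}$) coincide with the quantizers used in the paper, and you in fact write out explicitly the second and third ones, which the paper only sketches in its final sentence.
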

                A proof of Proposition \ref{prop:CN_optimality} is given in Section \ref{subsec:proof_CN_optimality}. Whenever $p_{i,j}(\c^*,.)$ is continuous, Proposition \ref{prop:CN_optimality} can provide a local upper bound on the mass around $N(\c^*)$. 
                \begin{cor}\label{cor:MC_necessary}
                Assume that $\c^* \in \mathcal{M}$ and, for all $i \neq j$ and $t \leq t_0$ $p_{i,j}$ is continuous on $[0,t_0]$. Then there exists $r_0 >0$ such that, for all $r \leq r_0$,
                \begin{align*}
                P(\mathcal{B}(N(\c^*),r)) \leq \frac{8k}{B}r.                
                \end{align*}                
                \end{cor}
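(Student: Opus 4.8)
The plan is to split the mass near $N(\c^*)$ into the pairwise ``slab'' contributions $p_{i,j}(\c^*,r)$ and then to turn the integral inequality of Proposition~\ref{prop:CN_optimality} into a pointwise bound on each $p_{i,j}(\c^*,r)$, using that $s \mapsto p_{i,j}(\c^*,s)$ is nondecreasing (its defining set grows with $s$). First I would invoke the elementary bound already recorded above, namely $P(\mathcal{B}(N(\c^*),r)) \le \sum_{i \neq j} p_{i,j}(\c^*,r)$, so that it suffices to control each term on the right.

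Fix $i \neq j$ and let $r < r_{i,j}(\c^*)/4$; set $t = 2r/r_{i,j}(\c^*) \in (0,1/2)$. Applying the first inequality of Proposition~\ref{prop:CN_optimality}, keeping only the $p_j(\c^*)$ term and bounding $1/(1+2t) \le 1$, gives
\[
\int_0^{2r} p_{i,j}(\c^*,s)\,ds \;\le\; 2 t^2 r_{i,j}(\c^*)\, p_j(\c^*) \;=\; \frac{8 r^2}{r_{i,j}(\c^*)}\, p_j(\c^*).
\]
Since $p_{i,j}(\c^*,\cdot)$ is nondecreasing, $\int_0^{2r} p_{i,j}(\c^*,s)\,ds \ge \int_r^{2r} p_{i,j}(\c^*,s)\,ds \ge r\, p_{i,j}(\c^*,r)$, so that $p_{i,j}(\c^*,r) \le \frac{8r}{r_{i,j}(\c^*)} p_j(\c^*) \le \frac{8r}{B}\, p_j(\c^*)$, the last step using $r_{i,j}(\c^*) \ge B$ (Proposition~\ref{prop:structure_minimizers}). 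The constraint $r < r_{i,j}(\c^*)/4$ holds uniformly in $(i,j)$ as soon as $r < B/4$.

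It then remains to sum over the $k(k-1)$ ordered pairs $i \neq j$ and to use that the Voronoi cells $V_j(\c^*)$ cover $\mathcal{H}$ up to the $P$-null set $N(\c^*)$ (a consequence of the centroid condition, Proposition~\ref{prop:centroid_condition}), hence $\sum_{j=1}^k p_j(\c^*) = 1$:
\[
P(\mathcal{B}(N(\c^*),r)) \;\le\; \sum_{i \neq j} \frac{8r}{B}\, p_j(\c^*) \;=\; \frac{8r(k-1)}{B}\sum_{j=1}^{k} p_j(\c^*) \;=\; \frac{8r(k-1)}{B} \;\le\; \frac{8k}{B}\, r .
\]
This yields the claim with, say, $r_0 = B/4$ (one may additionally intersect with the range $[0,t_0]$ on which $p_{i,j}$ is assumed continuous, i.e.\ take $r_0 = \min(B/4, t_0/2)$).

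The argument is short; the one point that needs care is extracting a pointwise estimate on $p_{i,j}(\c^*,r)$ from an inequality on $\int_0^{2r} p_{i,j}(\c^*,s)\,ds$, which cannot be done without monotonicity of the integrand — this is exactly the purpose of restricting the integral to the sub-interval $[r,2r]$ (and why the continuity/monotonicity of $p_{i,j}(\c^*,\cdot)$ is invoked in the statement). The second, purely bookkeeping, point is to make the threshold on $r$ uniform over all pairs, which is what the strict positivity of $B$ furnished by Proposition~\ref{prop:structure_minimizers} guarantees.
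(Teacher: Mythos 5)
Your proof is correct and follows exactly the route the paper intends: the paper gives no written proof of this corollary, presenting it as a direct consequence of Proposition \ref{prop:CN_optimality} together with the decomposition $P(\mathcal{B}(N(\c^*),t)) \leq \sum_{i \neq j} p_{i,j}(\c^*,t)$ stated just above it, and your argument (integral bound from Proposition \ref{prop:CN_optimality}, pointwise bound via monotonicity of $p_{i,j}(\c^*,\cdot)$ on $[r,2r]$, summation using $\sum_j p_j(\c^*)=1$ and $r_{i,j}(\c^*) \geq B$) is precisely that derivation carried out. Two minor remarks: Proposition \ref{prop:CN_optimality} requires $t<1/2$, so take $r_0$ strictly below $B/4$ (say $r_0 = B/8$); and since your argument uses only monotonicity of $p_{i,j}(\c^*,\cdot)$, which holds automatically, the continuity hypothesis is in fact not needed for your version of the proof.
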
 
Note that whenever $\mathcal{H}=\mathbb{R}^d$ and $P$ has a density, the assumptions of Corollary \ref{cor:MC_necessary} are satisfied. In this case,
Corollary \ref{cor:MC_necessary} states that all optimal codebooks satisfy a condition that looks like Definition \ref{def:margincondition}, though with a clearly worse constant than the required one.  Up to a thorough work on the constants involved in those results, this suggests that margin conditions (or at least weaker but sufficient versions) might be quite generally satisfied. As exposed below, satisfying such a condition provides interesting structural results.   
         \subsection{Structural properties under margin condition}
         
         The  existence of a natural classification, stated in terms of a margin condition in Definition \ref{def:margincondition}, gives some guarantees on the set of optimal codebooks $\mathcal{M}$. Moreover, it also allows local convexity of the distortion $R_{dist}$. These properties are summarized in the following fundamental Proposition.
         \begin{prop}{\cite[Proposition 2.2]{Levrard15}}\label{prop:properties_under_MC}
         Assume that $P$ satisfies a margin condition with radius $r_0$, then the following properties hold.
         \begin{itemize}
         \item[$i)$] For every $\c^*$ in $\mathcal{M}$ and $\c$ in $\mathcal{B}(0,M)^k$, if $\| \c - \c^* \| \leq \frac{Br_0}{4 \sqrt{2}M}$, then
          \begin{align}\label{eq:localconvexity}
			 R_{dist}(\c) - R_{dist}(\c^*) \geq \frac{p_{min}}{2} \| \c - \c^*\|^2.
				\end{align}
         \item[$ii)$] $\mathcal{M}$ is finite.
         \item[$iii)$] There exists $\varepsilon >0$ such that $P$ is $\varepsilon$-separated.
         \item[$iv)$] For all $\c$ in $\mathcal{B}(0,M)^k$,
         \begin{align}\label{eq:lienantos}
         \frac{1}{16M^2} \Var(\gamma(\c,.) - \gamma(\c^*(\c),.)) \leq \| \c - \c^*(\c) \|^2 \leq \kappa_0 \left ( R_{dist}(\c) - R_{dist}(\c^*) \right ),
         \end{align}
         where $\kappa_0 = 4kM^2\left ( \frac{1}{\varepsilon} \vee \frac{64 M^2}{p_{min} B^2 r_0^2} \right )$, and $\c^*(\c) \in \underset{ \c^* \in \mathcal{M}}{\arg\min}{\| \c - \c^*\|}$.
         \end{itemize}
         \end{prop}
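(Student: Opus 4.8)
The plan is to establish the four items in order, since (ii)--(iv) all lean on the local convexity bound (i).

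For (i), I would compare $\c$ to $\c^*$ through the auxiliary quantizer $\tilde{Q}$ that maps the whole cell $W_j(\c^*)$ onto $c_j$. Splitting $R_{dist}(\c)-R_{dist}(\c^*) = \big(R_{dist}(\c)-P\|x-\tilde{Q}(x)\|^2\big) + \big(P\|x-\tilde{Q}(x)\|^2 - R_{dist}(\c^*)\big)$, the second term equals $\sum_j P(V_j(\c^*))\|c_j-c_j^*\|^2 \geq p_{min}\|\c-\c^*\|^2$ by the centroid condition of Proposition \ref{prop:centroid_condition}, while the first term is $-\Delta$ with $\Delta = P\big(\|x-\tilde{Q}(x)\|^2-\|x-Q_{\c}(x)\|^2\big)\geq 0$. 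The integrand of $\Delta$ vanishes wherever $Q_\c$ and $Q_{\c^*}$ assign the same label; an elementary estimate (each $\|c_i-c_i^*\|\leq\|\c-\c^*\|$, the bisectors of $\c^*$ are $B$-separated, $\mathrm{supp}(P)\subset\mathcal{B}(0,M)$) shows that the disagreement set lies in $\mathcal{B}(N(\c^*),4M\|\c-\c^*\|/B)$ and that the integrand is at most $8M\|\c-\c^*\|$ there, so $\Delta\leq 8M\|\c-\c^*\|\,p\!\big(4M\|\c-\c^*\|/B\big)$. The hypothesis $\|\c-\c^*\|\leq Br_0/(4\sqrt{2}M)$ makes the argument of $p$ at most $r_0$, so the margin condition bounds it by $p_{min}\|\c-\c^*\|/(32M)$, whence $\Delta\leq p_{min}\|\c-\c^*\|^2/4$ and $R_{dist}(\c)-R_{dist}(\c^*)\geq (3/4)\,p_{min}\|\c-\c^*\|^2$.

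Items (ii) and (iii) follow next. From (i), two distinct elements of $\mathcal{M}$ must be more than $Br_0/(4\sqrt{2}M)$ apart (otherwise one would have strictly smaller distortion than the other), so $\mathcal{M}$, being a bounded subset of $(\mathbb{R}^d)^k$, is finite. For (iii), a stationary point with an empty Voronoi cell has distortion at least the optimal $(k{-}1)$-point distortion, which exceeds $R_{dist}(\c^*)$ by a fixed positive amount because $\mathrm{supp}(P)$ has more than $k$ points; so it remains to bound $R_{dist}(\c)-R_{dist}(\c^*)$ from below over non-degenerate $\c\in\bar{\mathcal{M}}\setminus\mathcal{M}$, all of which lie in $\mathcal{B}(0,M)^k$. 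The crucial point is that optimal codebooks are isolated among stationary points: there is $\rho_1>0$ depending only on $M,B,p_{min},r_0$ such that any non-degenerate $\c\in\bar{\mathcal{M}}$ with $\|\c-\c^*\|\leq\rho_1$ (for some $\c^*\in\mathcal{M}$) equals $\c^*$. This comes from comparing the centroid identities of $\c$ and $\c^*$: their cell masses and barycenters differ only through integrals over the disagreement region $\mathcal{B}(N(\c^*),4M\|\c-\c^*\|/B)$, and the same use of the margin condition as in (i), together with $\|P(x\mathbbm{1}_{W_i(\c)})\|\leq M\,P(W_i(\c))$ and $P(W_i(\c^*))\geq p_{min}$, yields $\|\c-\c^*\|\leq (4M/p_{min})\,p\big(4M\|\c-\c^*\|/B\big)\leq\|\c-\c^*\|/8$, forcing $\c=\c^*$. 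Then, if some $\c^{(n)}\in\bar{\mathcal{M}}\setminus\mathcal{M}$ had $R_{dist}(\c^{(n)})\to R_{dist}(\c^*)$, these would eventually be non-degenerate, a subsequence would converge by compactness to some $\bar{\c}$ with $R_{dist}(\bar{\c})=R_{dist}(\c^*)$ (continuity of $R_{dist}$ on $\mathcal{B}(0,M)^k$), hence $\bar{\c}\in\mathcal{M}$, and the isolation property would force $\c^{(n)}=\bar{\c}\in\mathcal{M}$ for large $n$ --- a contradiction. So the separation factor $\varepsilon$ is positive.

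For (iv), the left inequality is crude: $\Var\big(\gamma(\c,\cdot)-\gamma(\c^*(\c),\cdot)\big)\leq P\big(\gamma(\c,\cdot)-\gamma(\c^*(\c),\cdot)\big)^2$, and writing $\gamma(\c,x)-\gamma(\c^*(\c),x)$ as a difference of two squared distances to a common label gives $|\gamma(\c,x)-\gamma(\c^*(\c),x)|\leq 4M\max_i\|c_i-c_i^*(\c)\|\leq 4M\|\c-\c^*(\c)\|$, hence the factor $16M^2$. For the right inequality I would split on $d(\c,\mathcal{M})=\|\c-\c^*(\c)\|$: if this is at most $Br_0/(4\sqrt{2}M)$, item (i) gives $\|\c-\c^*(\c)\|^2\leq (2/p_{min})(R_{dist}(\c)-R_{dist}(\c^*))\leq\kappa_0(R_{dist}(\c)-R_{dist}(\c^*))$ since $\kappa_0\geq 16k/p_{min}$ (using $B\leq 2M$, $r_0< 2M$). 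Otherwise $R_{dist}$, continuous, attains its infimum over the compact set $\{\c\in\mathcal{B}(0,M)^k:\ d(\c,\mathcal{M})\geq Br_0/(4\sqrt{2}M)\}$ at some $\c_0$, and either $d(\c_0,\mathcal{M})$ equals the threshold, so (i) gives $R_{dist}(\c_0)-R_{dist}(\c^*)\geq p_{min}B^2r_0^2/(64M^2)$, or $d(\c_0,\mathcal{M})$ is strictly larger, in which case $\c_0$ is a local minimizer of $R_{dist}$ over $\mathcal{B}(0,M)^k$, hence satisfies the centroid condition (moving any barycenter-violating coordinate toward its cell barycenter --- a legal move by convexity of $\mathcal{B}(0,M)$ --- strictly decreases the distortion), so $\c_0\in\bar{\mathcal{M}}\setminus\mathcal{M}$ and $R_{dist}(\c_0)-R_{dist}(\c^*)\geq\varepsilon$ by (iii). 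In either branch $R_{dist}(\c)-R_{dist}(\c^*)\geq\min(\varepsilon,\ p_{min}B^2r_0^2/(64M^2))$, and since $\|\c-\c^*(\c)\|^2\leq 4kM^2$ the stated bound with $\kappa_0=4kM^2\big(1/\varepsilon\vee 64M^2/(p_{min}B^2r_0^2)\big)$ follows.

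The step I expect to be the main obstacle is item (iii), specifically the isolation of optimal codebooks among the stationary points: this is the one place where the margin condition must be used as a genuine quantitative input rather than through the soft compactness/continuity arguments that carry the rest, and the bookkeeping around degenerate stationary points (empty cells) needs some care. Everything else is routine manipulation of the centroid condition together with the radius-$M$ boundedness.
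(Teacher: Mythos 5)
The paper gives no proof of this proposition; it is imported verbatim from \cite[Proposition 2.2]{Levrard15}, and your route is essentially the one used there: centroid/bias--variance decomposition plus localization of the disagreement region inside $\mathcal{B}(N(\c^*),CM\|\c-\c^*\|/B)$ for $(i)$; isolation of optimal codebooks among stationary points plus a compactness argument for $(ii)$--$(iii)$; and the dichotomy on $\|\c-\c^*(\c)\|$ for $(iv)$. Item $(i)$ and the left inequality of $(iv)$ are sound as written (for $(i)$, the correct localization constant is $4\sqrt{2}M/B$ rather than your $4M/B$ --- which is precisely why the hypothesis reads $\|\c-\c^*\|\leq Br_0/(4\sqrt{2}M)$ --- and the resulting constant $1-\sqrt{2}/4>1/2$ still yields the claim), and the two-branch argument for the right inequality of $(iv)$, including the verification that a local minimizer over $\mathcal{B}(0,M)^k$ must satisfy the centroid condition, is the right mechanism.

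Two steps have genuine gaps. First, the isolation estimate in $(iii)$: bounding each coordinate by $\|c_i-c_i^*\|\leq(4M/p_{min})\,p(\cdot)$ and then assembling the codebook norm costs a factor $\sqrt{k}$, so the contraction $\|\c-\c^*\|\leq\|\c-\c^*\|/8$ you claim only holds for small $k$ as written; the repair is to observe that the cellwise symmetric differences $W_i(\c)\,\triangle\,V_i(\c^*)$ cover each point at most twice, so their total mass is at most $2p(\cdot)$, which restores a $k$-free contraction. Second, every compactness step --- finiteness of a separated bounded set in $(ii)$, the convergent subsequence of stationary points in $(iii)$, and the attainment of $\inf R_{dist}$ over $\{\c:\ d(\c,\mathcal{M})\geq Br_0/(4\sqrt{2}M)\}$ in $(iv)$ --- is argued as if $\mathcal{H}=\mathbb{R}^d$. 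The paper works in a separable Hilbert space (this is the point of the ``dimension-free'' discussion after Theorem \ref{thm:fast_rates_distortion}), where closed bounded sets are not norm-compact and complements of balls are not weakly closed; one must exploit the specific form of stationary points (each code point is a cell barycenter, i.e.\ a ratio of integrals against indicators) to upgrade weak limits to norm limits before invoking norm-isolation. These are exactly the points the cited proof has to, and does, address; your architecture is otherwise the intended one.
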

         
         Properties $ii)$ and $iii)$ guarantee that whenever a margin condition is satisfied, there exist finitely many optimal codebooks that are clearly separated in terms of distortion. When $P \sim \mathcal{N}(0,I_d)$, since $|\mathcal{M}| = + \infty$, $P$ does not satisfy a margin condition. This finite set property also allows to give some structural results about the optimal codebooks. Namely, we can easily deduce the following.
         \begin{cor}\label{cor:optimal_codebook_invariance}
         Let $\mathcal{T}$ be the isometry group of $P$, and let $\c^* \in \mathcal{M}$. If $P$ satisfies a margin condition, then $| \mathcal{T}(\c^*) | < + \infty$.
         \end{cor}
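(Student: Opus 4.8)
The plan is to combine the invariance of the set $\mathcal{M}$ under $P$-compatible isometries (Lemma \ref{lem:optimal_invariance}) with the finiteness of $\mathcal{M}$ guaranteed by the margin condition (Proposition \ref{prop:properties_under_MC} $ii)$). Concretely, the orbit $\mathcal{T}(\c^*)$ will be exhibited as a subset of $\mathcal{M}$, and then its cardinality is bounded by $|\mathcal{M}|$.

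First I would observe that for every $T$ in the isometry group $\mathcal{T}$ of $P$, that is every isometric transformation with $T \sharp P = P$, Lemma \ref{lem:optimal_invariance} gives $T(\mathcal{M}) = \mathcal{M}$, where $T$ acts on a codebook $\c = (c_1, \hdots, c_k)$ componentwise through $T(\c) = (T(c_1), \hdots, T(c_k))$. Applying this to the fixed $\c^* \in \mathcal{M}$ yields $T(\c^*) \in T(\mathcal{M}) = \mathcal{M}$ for every $T \in \mathcal{T}$, hence $\mathcal{T}(\c^*) = \{ T(\c^*) \mid T \in \mathcal{T} \} \subseteq \mathcal{M}$. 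Since $P$ satisfies a margin condition, Proposition \ref{prop:properties_under_MC} $ii)$ ensures that $\mathcal{M}$ is finite, so $|\mathcal{T}(\c^*)| \leq |\mathcal{M}| < +\infty$, which is the desired conclusion.

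There is essentially no obstacle here: the only mild point is to make sure the componentwise action appearing in Lemma \ref{lem:optimal_invariance} is exactly the one used to define the orbit $\mathcal{T}(\c^*)$, which is immediate. It is worth stressing, though, that the margin condition is what makes the statement non-trivial: for $P \sim \mathcal{N}(0, I_d)$ one has $|\mathcal{M}| = +\infty$, the rotation group fixing $P$ is infinite, and the orbit of any $\c^* \in \mathcal{M}$ under it is infinite too, consistently with the fact that such a $P$ does not satisfy a margin condition.
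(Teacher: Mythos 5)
Your argument is correct and is exactly the deduction the paper intends: the orbit $\mathcal{T}(\c^*)$ sits inside $\mathcal{M}$ by Lemma \ref{lem:optimal_invariance}, and $\mathcal{M}$ is finite under the margin condition by Proposition \ref{prop:properties_under_MC} $ii)$. The paper gives no separate proof beyond noting this is an easy consequence, so there is nothing to add.
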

An easy instance of application of Corollary \ref{cor:optimal_codebook_invariance} can be stated  in the truncated Gaussian Mixture model exposed  in Section \ref{subsec:MC_instances}. Let $S(\m)$ denote the subset of $\{1, \hdots, d \}$ such that, for all $j$ and $r \notin S(\m)$ $m_j^{(r)}=0$, where $m_j^{(r)}$ denotes the $r$-th coordinate of $m_j$. Under the conditions of Proposition \ref{prop:GM_margincondition}, if we further require that for all $j$ and $r,s$ in $S(\m) \times S(\m)^c$, $\Sigma_{j,rs}=0$, then it is immediate that $S(\c^*) \subset S(\m)$. Such a property might be of particular interest when variable selection is performed as in \cite{Levrard18}.   

       Properties $i)$ and $iv)$ of Proposition \ref{prop:properties_under_MC} allow to make connections between the margin condition defined in Definition \ref{def:margincondition} and earlier results on improved convergence rates for the distortion. To be more precise, it is proved in \cite{Chou94} that if $P$ has a continuous density, unique optimal codebook $\c^*$, and if the distortion function $R_{dist}$ has a positive Hessian matrix at $\c^*$, then $ R_{dist}(\hat{\c}_n) - R_{dist}(\c^*)  = O_{\mathbb{P}}(1/n)$. It is straightforward that in the case where $P$ has a continuous density and a unique optimal codebook, \eqref{eq:localconvexity} yields that the Hessian matrix of the distortion is positive, hence the margin condition gives the convergence rate in $O_{\mathbb{P}}(1/n)$ for the distortion in this case.
       
       On the other hand, it is proved in \cite[Theorem 2]{Antos04} that, if $ \Var(\gamma(\c,.) - \gamma(\c^*(\c),.) \leq A (R_{dist}(\c) - R_{dist}(\c^*))$, for some constant $A$, then the convergence rate $\mathbb{E}(R_{dist}(\hat{\c}_n) - R_{dist}(\c^*)) \leq C/n$ can be attained for the expected distortion of an empirical distortion minimizer. Thus, if $P$ satisfies a margin condition, then \eqref{eq:lienantos} shows that $P$ is in the scope of this result. In the following section, more precise bounds are derived for this excess distortion when $P$ satisfies a margin condition.
       
       At last, Properties $i)$ and $iv)$ allow to relate excess distortion and excess classification risk, when appropriate. 
       For a codebook $\c$ in $\mathcal{H}^k$, we denote by $\mathcal{C}(\c)$ its associated Voronoi partition (with ties arbitrarily broken). 
       \begin{cor}\label{cor:link_distortion_classification}
       Assume that $P$ satisfies a margin condition (Definition \ref{def:margincondition}) with radius $r_0$. Let 
 $\delta$ denote the quantity $\frac{p_{min}B^2 r_0^2}{64 M^2} \wedge \varepsilon$. For every $\c \in \mathcal{H}^k$ such that $R_{dist}(\c) - R_{dist}(\c^*) \leq \delta$, we have 
       \begin{align*}
       R_{classif}(\mathcal{C}(\c),\mathcal{C}(\c^*(\c))) \leq \frac{\sqrt{p_{min}}}{16M}  \sqrt{R_{dist}(\c) - R_{dist}(\c^*)},  
\end{align*}
where $\c^*(\c)$ is a closest optimal codebook to $\c$.        
      \end{cor}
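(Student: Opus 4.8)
The plan is to chain three facts: a localization of $\c$ around the set $\mathcal{M}$ of optimal codebooks, a geometric description of the region where the Voronoi partitions of $\c$ and of a closest optimal codebook disagree, and the margin condition. Fix $\c\in\mathcal{B}(0,M)^{k}$ (to which the statement reduces) with $R_{dist}(\c)-R_{dist}(\c^*)\leq\delta$, and let $\c^*=\c^*(\c)$ be a closest optimal codebook, its coordinates ordered so that $\|\c-\c^*\|=\min_{\tilde{\c}\in\mathcal{M}}\|\c-\tilde{\c}\|$. The first step is to show that the hypothesis forces $\|\c-\c^*\|\leq\frac{Br_{0}}{4\sqrt{2}M}$, i.e.\ that $\c$ lies in the ball on which the local convexity inequality \eqref{eq:localconvexity} holds; this is where the factor $\varepsilon$ buried in $\delta$ is used, and it is the main obstacle (treated last). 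Granting it, \eqref{eq:localconvexity} yields $\|\c-\c^*\|^{2}\leq\frac{2}{p_{min}}\bigl(R_{dist}(\c)-R_{dist}(\c^*)\bigr)$. The second step shows that, up to a $P$-null set, $\bigcup_{i\neq j}W_{i}(\c)\cap W_{j}(\c^*)\subseteq\mathcal{B}\bigl(N(\c^*),t\bigr)$ with $t=\frac{4M}{B}\|\c-\c^*\|$. Since $t\leq r_{0}/\sqrt{2}\leq r_{0}$, the margin condition \eqref{majorationkappa} then applies, and bounding $R_{classif}$ by its value at the identity permutation gives
\[
R_{classif}(\mathcal{C}(\c),\mathcal{C}(\c^*))\leq P\bigl(\mathcal{B}(N(\c^*),t)\bigr)\leq p(t)\leq\frac{Bp_{min}}{128M^{2}}\,t=\frac{p_{min}}{32M}\|\c-\c^*\|.
\]
Inserting $\|\c-\c^*\|\leq\sqrt{\frac{2}{p_{min}}(R_{dist}(\c)-R_{dist}(\c^*))}$ turns the right-hand side into $\frac{\sqrt{p_{min}}}{16\sqrt{2}\,M}\sqrt{R_{dist}(\c)-R_{dist}(\c^*)}$, which is slightly stronger than the claimed inequality.

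For the geometric step, take $x\in W_{i}(\c)\cap W_{j}(\c^*)$ with $i\neq j$ and $\|x\|\leq M$ (the latter $P$-almost surely, $P$ being $M$-bounded). From $x\in V_{i}(\c)$ and $x\in V_{j}(\c^*)$ we get $\|x-c_{i}\|\leq\|x-c_{j}\|$ and $\|x-c_{j}^{*}\|\leq\|x-c_{i}^{*}\|$; since $\|c_{l}-c_{l}^{*}\|\leq\|\c-\c^*\|$ for every $l$, the triangle inequality gives $0\leq\|x-c_{i}^{*}\|-\|x-c_{j}^{*}\|\leq 2\|\c-\c^*\|$, and $\|x-c_{i}^{*}\|+\|x-c_{j}^{*}\|\leq 4M$, so $\bigl|\,\|x-c_{i}^{*}\|^{2}-\|x-c_{j}^{*}\|^{2}\,\bigr|\leq 8M\|\c-\c^*\|$. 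The left-hand side is $2\|c_{i}^{*}-c_{j}^{*}\|$ times the distance from $x$ to the bisecting hyperplane of $c_{i}^{*}$ and $c_{j}^{*}$, and $\|c_{i}^{*}-c_{j}^{*}\|\geq B$ by Proposition \ref{prop:structure_minimizers}, so this distance is at most $t$ and $x\in\mathcal{B}(N(\c^*),t)$. Since the $W_{l}(\c^*)$ tessellate $\mathcal{H}$, $\bigcup_{i\neq j}W_{i}(\c)\cap W_{j}(\c^*)$ is exactly the disagreement set integrated by $R_{classif}$ at the identity permutation, giving the second step.

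The main obstacle is the localization. One cannot use \eqref{eq:localconvexity} to show that $\c$ lies in the ball where it holds, and the global bound $\|\c-\c^*\|^{2}\leq\kappa_{0}\bigl(R_{dist}(\c)-R_{dist}(\c^*)\bigr)$ from part $iv)$ of Proposition \ref{prop:properties_under_MC} is too crude, since $\kappa_{0}\delta=4kM^{2}$ only gives $\|\c-\c^*\|\leq 2\sqrt{k}M$. Instead, as in the proof of Proposition \ref{prop:properties_under_MC}, I would show that $R_{dist}-R_{dist}(\c^*)\geq\delta$ on $K:=\{\c\in\mathcal{B}(0,M)^{k}:\|\c-\c^*(\c)\|\geq\frac{Br_{0}}{4\sqrt{2}M}\}$ by compactness: $K$ is compact (part $ii)$ of Proposition \ref{prop:properties_under_MC} makes $\mathcal{M}$ finite, and $\c\mapsto\|\c-\c^*(\c)\|$ is continuous) and disjoint from $\mathcal{M}$, so $R_{dist}-R_{dist}(\c^*)$ attains a positive minimum on $K$ at some $\c_{0}$; if $\|\c_{0}-\c^*(\c_{0})\|=\frac{Br_{0}}{4\sqrt{2}M}$ then \eqref{eq:localconvexity} gives $R_{dist}(\c_{0})-R_{dist}(\c^*)\geq\frac{p_{min}}{2}\cdot\frac{B^{2}r_{0}^{2}}{32M^{2}}=\frac{p_{min}B^{2}r_{0}^{2}}{64M^{2}}\geq\delta$; otherwise $\c_{0}$ is a local minimizer of the distortion, hence satisfies the centroid condition (as recalled after Definition \ref{def:separation_factor}), hence lies in $\bar{\mathcal{M}}\setminus\mathcal{M}$, so $R_{dist}(\c_{0})-R_{dist}(\c^*)\geq\varepsilon\geq\delta$ by $\varepsilon$-separation. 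In both cases the minimum over $K$ is $\geq\delta$, so $R_{dist}(\c)-R_{dist}(\c^*)\leq\delta$ forces $\c\notin K$, which is precisely the first step. The one point needing care is that $\c_{0}$ might sit on $\partial\mathcal{B}(0,M)^{k}$; this is excluded because a codepoint of a near-optimal codebook is, up to a small error, the $P$-centroid of its cell and hence has norm $<M$ (or one simply enlarges $M$ by an arbitrarily small amount).
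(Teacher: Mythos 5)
Your route is exactly the paper's: localize $\c$ near $\mathcal{M}$ using $\delta$, include the disagreement region $\bigcup_{i\neq j}W_i(\c)\cap W_j(\c^*(\c))$ in a tube $\mathcal{B}(N(\c^*(\c)),t)$ with $t$ proportional to $(M/B)\|\c-\c^*(\c)\|$, apply the margin condition, and convert $\|\c-\c^*(\c)\|$ back into distortion via \eqref{eq:localconvexity}. The paper simply cites \cite[Lemmas 4.2 and 4.4]{Levrard15} for the first two ingredients, whereas you re-derive them. Your re-derivation of the geometric inclusion is correct, and your tube radius $t=\frac{4M}{B}\|\c-\c^*(\c)\|$ (hence the final constant $\frac{\sqrt{p_{min}}}{16\sqrt{2}M}$) is in fact slightly sharper than the $\frac{4\sqrt{2}M}{B}\|\c-\c^*(\c)\|$ used in the paper.

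The genuine gap is in your localization step. You assert that $K=\{\c\in\mathcal{B}(0,M)^k:\|\c-\c^*(\c)\|\geq\frac{Br_0}{4\sqrt{2}M}\}$ is compact; this holds only when $\dim(\mathcal{H})<\infty$. In an infinite-dimensional Hilbert space --- a setting the corollary is explicitly meant to cover, cf.\ the discussion of $L_2([0,1])$ after Theorem \ref{thm:fast_rates_distortion} --- closed bounded sets are not norm-compact, and $K$ is not even weakly closed (it is the complement of a finite union of open balls, and the norm is only weakly lower semi-continuous), so the minimizer $\c_0$ of $R_{dist}$ over $K$ on which your entire case analysis rests need not exist. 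This is precisely the difficulty that \cite[Lemma 4.4]{Levrard15} is designed to resolve in the Hilbert setting, and the paper's proof sidesteps it by citation. Once the existence of $\c_0$ is granted, your two cases are sound, and the boundary issue you flag is more easily dispatched than you suggest: if some $c_{0,j}$ with $P(W_j(\c_0))>0$ is not the $P$-centroid of its cell, moving it toward that centroid stays in $\mathcal{B}(0,M)^k$ (the centroid lies in the closed convex hull of the support), stays in the open set $\{\|\c-\c^*(\c)\|>r\}$, and strictly decreases $R_{dist}$; hence a minimizer interior to $K$ satisfies the centroid condition whether or not it touches the sphere $\partial\mathcal{B}(0,M)^k$.
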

A short proof of Corollary \ref{cor:link_distortion_classification} is given in Section \ref{subsec:proof_cor_link_distortion_classification}. Corollary \ref{cor:link_distortion_classification} summarizes the connection between classification and distortion carried by the margin condition: if a natural classification exists, that is if $P$ is separated into $k$ spherical components, then this classification can be inferred from quantizers that are designed to achieve a low distortion. As exposed in the following section, an other interest in satisfying a margin condition is achieving an improved convergence rate in terms of distortion for the empirical distortion minimizer.        
      
\section{Convergence of an empirical risk minimizer}\label{sec:convergence_erm}

           If $P$ is $M$-bounded, then the excess distortion of an empirical distortion minimizer can be bounded by
           \[
           \mathbb{E} ( R_{dist}(\hat{\c}_n) - R_{dist}(\c^*)) \leq \frac{C(k) M^2}{\sqrt{n}}.
           \]
      Such a result can be found in \cite{Linder02} for the case $\mathcal{H}=\mathbb{R}^d$, and in \cite{Biau08} for the general case where $\mathcal{H}$ is a separable Hilbert space. When $P$ satisfies a margin condition, faster rates can be achieved. The following Theorem is a refined version of \cite[Theorem 3.1]{Levrard15}.
      \begin{thm}\label{thm:fast_rates_distortion}
      We assume that $P$ satisfies a margin condition (Definition \ref{def:margincondition}) with radius $r_0$, and we let $\delta$ denote the quantity $\frac{p_{min}B^2 r_0^2}{64 M^2} \wedge \varepsilon$. Then, 
      \begin{align*}
        \mathbb{E} \left ( R_{dist}(\hat{\c}_n) - R_{dist}(\c^*) \right ) \leq \begin{multlined}[t] \frac{ C\left ( k + \log \left ( \left | \bar{\mathcal{M}} \right | \right ) \right )M^2 }{np_{min}} + \left [ \frac{20 k M^2}{\sqrt{n}} - \delta \right ] \mathbbm{1}_{\delta < \frac{12kM^2}{\sqrt{n}}}  \\ + \left [ e^{- \frac{n}{2M^4}\left ( (\delta - \frac{12kM^2}{\sqrt{n}}) \right )^2} \frac{M^2}{\sqrt{n}} \right ] \mathbbm{1}_{\delta \geq \frac{12kM^2}{\sqrt{n}}},
         \end{multlined}
         \end{align*}
         where $C$ denotes a (known) constant and $\left | \bar{\mathcal{M}} \right |$ denotes the number of optimal codebooks up to relabeling.
      \end{thm}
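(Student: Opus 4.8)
The plan is to run a localization (peeling) argument in the spirit of \cite{Levrard15}, whose two ingredients are the variance--excess distortion control of Proposition \ref{prop:properties_under_MC} and a standard concentration inequality, and to split the analysis according to whether $\hat{\c}_n$ has already entered the region where $R_{dist}$ is locally convex. Introduce the excess loss $\ell(\c,.) = \gamma(\c,.) - \gamma(\c^*(\c),.)$, where $\c^*(\c)$ is a closest optimal codebook to $\c$; since $\c^*(\c) \in \mathcal{M}$ we have $R_{dist}(\c) - R_{dist}(\c^*) = P\ell(\c,.)$. As $\hat{\c}_n$ minimizes $P_n \gamma(\c,.)$ and $\c^*(\hat{\c}_n) \in \mathcal{M}$, the basic inequality $P_n \ell(\hat{\c}_n,.) \le 0$ gives
\[
R_{dist}(\hat{\c}_n) - R_{dist}(\c^*) = P\ell(\hat{\c}_n,.) \le (P - P_n)\ell(\hat{\c}_n,.) \le \sup_{\c \in \mathcal{B}(0,M)^k}(P - P_n)\bigl(\gamma(\c,.) - \gamma(\c^*,.)\bigr),
\]
and we record that $\hat{\c}_n \in \mathcal{B}(0,M)^k$, so this excess is at most $4M^2$. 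We then bound $\mathbb{E}(R_{dist}(\hat{\c}_n) - R_{dist}(\c^*))$ by treating separately the events $\{R_{dist}(\hat{\c}_n) - R_{dist}(\c^*) > \delta\}$ and $\{R_{dist}(\hat{\c}_n) - R_{dist}(\c^*) \le \delta\}$.

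On the large-excess event I would use the known slow-rate estimate $\mathbb{E}\sup_{\c}(P-P_n)(\gamma(\c,.) - \gamma(\c^*,.)) \le 12kM^2/\sqrt{n}$ from \cite{Linder02,Biau08}, together with McDiarmid's bounded-difference inequality (changing one sample point moves the above supremum by $O(M^2/n)$), to obtain $\mathbb{P}(R_{dist}(\hat{\c}_n) - R_{dist}(\c^*) > s) \le \exp\bigl(-\tfrac{n}{2M^4}(s - 12kM^2/\sqrt{n})^2\bigr)$ for $s \ge 12kM^2/\sqrt{n}$, and the trivial bound otherwise. Writing $\mathbb{E}[(R_{dist}(\hat{\c}_n) - R_{dist}(\c^*))\mathbbm{1}_{R_{dist}(\hat{\c}_n) - R_{dist}(\c^*) > \delta}] \le \delta\,\mathbb{P}(\cdot > \delta) + \int_\delta^{4M^2}\mathbb{P}(\cdot > s)\,ds$ and integrating --- a pure Gaussian tail when $\delta \ge 12kM^2/\sqrt{n}$, and picking up a linear contribution from the interval $[\delta, 12kM^2/\sqrt{n}]$ when $\delta$ is below that threshold --- yields exactly the last two summands of the stated bound, the indicators marking the regime switch.

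On the small-excess event I run the peeling over shells $S_j = \{\c : 2^j r^* < P\ell(\c,.) \le 2^{j+1}r^*\}$, $j \ge 0$, with $r^*$ the target rate. Proposition \ref{prop:properties_under_MC} gives $\Var(\ell(\c,.)) \le 16M^2\|\c - \c^*(\c)\|^2$; inside the locally convex ball this upgrades, via \eqref{eq:localconvexity}, to $\Var(\ell(\c,.)) \le 32M^2 P\ell(\c,.)/p_{min}$, while in the outer shells (still with excess distortion $\le \delta$) only the coarser $\Var(\ell(\c,.)) \le 16M^2\kappa_0 P\ell(\c,.)$ from \eqref{eq:lienantos} is available. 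Because $\mathcal{M}$ is finite (Proposition \ref{prop:properties_under_MC}~$ii)$), a union bound over the $|\bar{\mathcal{M}}|$ optimal codebooks, a dimension-free local Rademacher estimate of the form $\mathbb{E}\sup_{\|\c - \c^*\| \le \rho}(P-P_n)(\gamma(\c,.) - \gamma(\c^*,.)) \le CM\rho\sqrt{k}/\sqrt{n}$, and Bousquet's form of Talagrand's inequality control $\sup_{\c \in S_j}(P-P_n)\ell(\c,.)$ with probability $1 - e^{-(t + \log|\bar{\mathcal{M}}| + j)}$. Requiring this supremum to stay below $2^j r^*$ for every $j$ leads to a fixed-point equation whose solution is $r^* \le C(k + \log|\bar{\mathcal{M}}|)M^2/(np_{min})$: the $k$ comes from squaring the $\sqrt{k}$ in the local complexity, the $\log|\bar{\mathcal{M}}|$ from the union bound, and the $p_{min}^{-1}$ from the local-convexity constant in \eqref{eq:localconvexity}. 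The probability that $\hat{\c}_n$ falls in $\bigcup_{j \ge 1}S_j$ while keeping excess distortion $\le \delta$ is then exponentially small, and, reweighted by the $2^{j+1}r^*$ factor, contributes only $O(r^*)$; hence $\mathbb{E}[(R_{dist}(\hat{\c}_n) - R_{dist}(\c^*))\mathbbm{1}_{\le \delta}] \le C(k + \log|\bar{\mathcal{M}}|)M^2/(np_{min})$, the first summand. Adding the three contributions gives the theorem.

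The main obstacle is the bookkeeping in the small-excess regime. One must run a single peeling through two variance regimes --- the sharp $p_{min}^{-1}$ bound, valid only in the locally convex ball of radius $\asymp Br_0/M$ around each $\c^*$, versus the weaker $\kappa_0$ bound in the annulus reaching out to excess distortion $\delta$ --- and ensure that the outer-shell contributions are swallowed by the exponential tail precisely when $\delta \ge 12kM^2/\sqrt{n}$ (and otherwise resurface as the correction term $[\,20kM^2/\sqrt{n} - \delta\,]$), all while tracking the absolute constants so that the threshold $12kM^2/\sqrt{n}$ and the exponent $\tfrac{n}{2M^4}$ come out as claimed. A secondary, more technical point is proving the dimension-free bound $CM\rho\sqrt{k}/\sqrt{n}$ for the localized classes --- this is where the Hilbert-space structure of $\mathcal{H}$ must be exploited (writing $\gamma(\c,.)$ through the linear functionals $x \mapsto \langle x, c_j\rangle$) rather than a crude $\sqrt{kd}$ covering-number estimate, which would be dimension dependent.
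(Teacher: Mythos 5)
Your proposal follows essentially the same route as the paper's proof: the paper also splits on the event $\{R_{dist}(\hat{\c}_n)-R_{dist}(\c^*)>\delta\}$, handles the large-excess part via the deviation form of the slow rate from \cite[Corollary 2.1]{Biau08} (which is precisely your expectation-plus-bounded-differences bound, yielding the Gaussian tail with threshold $12kM^2/\sqrt{n}$), and on the complementary event --- where the margin condition forces $\hat{\c}_n$ into the convex basin $\mathcal{B}(\mathcal{M},r)$, so only the sharp $p_{min}^{-1}$ variance regime is needed --- invokes the localization/Talagrand argument of \cite[Theorem 3.1]{Levrard15} that you reconstruct, before integrating the two tails exactly as you describe. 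The decomposition and both key ingredients coincide; the paper simply cites what you re-derive.
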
      
      
      A short proof is given in Section \ref{subsec:proof_thm_fast_rates_distortion}. Theorem \ref{thm:fast_rates_distortion} confirms that the fast $1/n$ rate for the distortion may be achieved as in \cite{Pollard82} or \cite{Antos04}, under slightly more general conditions. It also emphasizes that the convergence rate of the distortion is ``dimension-free'', in the sense that it only depends on the dimension through the radius of the support $M$. For instance, quantization of probability distributions over the unit $L_2$-ball of $L_2([0,1])$ (squared integrable functions) is in the scope of Theorem \ref{thm:fast_rates_distortion}. Note that a deviation bound is also available for $R_{dist}(\hat{\c}_n) - R_{dist}(\c^*)$, stated as \eqref{eq:deviation_proba_in_basin}.
      
       In fact, this result shows that the key parameters that drive the convergence rate are rather the minimal distance between optimal codepoints $B$, the margin condition radius $r_0$ and the separation factor $\varepsilon$. These three parameters provide a local scale $\delta$ such that, if $n$ is large enough to distinguish codebooks at scale $\delta$ in terms of slow-rated distortion, i.e. $\sqrt{n} \delta \geq 12 k M^2$, then the distortion minimization boils down to $k$ well separated mean estimation problems, leading to an improved convergence rate in $k M^2/(n p_{min})$. Indeed, Theorem \ref{thm:fast_rates_distortion} straightforwardly entails that, for $n$ large enough, 
      \[
      \mathbb{E} \left ( R_{dist}(\hat{\c}_n) - R_{dist}(\c^*) \right ) \leq \frac{C'(k + \log(|\bar{\mathcal{M}}|) M^2}{n p_{min}}.
      \]
     Thus, up to the $\log(| \bar{\mathcal{M}} |)$ factor, the right-hand side corresponds to $\sum_{j=1}^{k} \mathbb{E}\left ( \| X - c_j^* \|^2 | X \in V_j(\c^*) \right )$. Combining Theorem \ref{thm:fast_rates_distortion} and Corollary \ref{cor:link_distortion_classification} leads to the following classification error bound for the empirical risk minimizer $\hat{\c}_n$. Namely, for $n$ large enough, it holds
     \begin{align*}
     \mathbb{E} \left [R_{classif}(\mathcal{C}(\hat{\c}_n),\mathcal{C}(\c^*(\hat{\c}_n))) \right ] \leq C' \frac{\sqrt{k +\log(|\bar{\mathcal{M}}|}}{\sqrt{n}}.
     \end{align*}     
     This might be compared with the $1/\sqrt{n}$ rate obtained in \cite[Theorem 1]{Azizyan13} for the classification error under Gaussian mixture with well-separated means assumption. Note however that in such a framework $\mathcal{C}(\c^*)$ might not be the optimal classification. However, under the assumptions of Proposition \ref{prop:GM_margincondition}, $\mathcal{C}(\c^*)$ and $\mathcal{C}(\m)$ can be proved close, and even the same in some particular cases as exposed in Corollary \ref{cor:classification_error_GM}.
     
     Next we intend to assess the optimality of the convergence rate exposed in Theorem \ref{thm:fast_rates_distortion}, by investigating lower bounds for the excess distortion over class of distributions that satisfy a margin condition. We let $\mathcal{D}(B_-,r_{0,-},p_-,\varepsilon_-)$ denote the set of distributions satisfying a margin condition with parameters      $B \geq B_-$, $r_0 \geq r_{0,-}$, $p_{min} \geq p_-$ and $\varepsilon \geq \varepsilon_-$. Some lower bound on the excess distortion over these sets are stated below.
     
     \begin{prop}{\cite[Proposition 3.1]{Levrard15}}\label{prop:minimax_old}
     If $\mathcal{H}= \mathbb{R}^d$, $k \geq 3$ and $n \geq 3k/2$, then 
     \begin{align*}
     \inf_{\hat{\c}} \sup_{P \in \mathcal{D}(c_1Mk^{-1/d},c_2Mk^{-1/d},c_3/k,c_4M^2 k^{-2/d}/\sqrt{n})}\mathbb{E} \left [ R_{dist}(\hat{\c}) - R_{dist}(\c^*) \right ] \geq c_0 \frac{M^2 k^{\frac{1}{2} - \frac{1}{d}}}{\sqrt{n}},
     \end{align*}
     where $c_0$, $c_1, c_2, c_3$ and $c_4$ are absolute constants.
     \end{prop}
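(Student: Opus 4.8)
The plan is to establish this minimax lower bound in the classical way for quantization: reduce to a finite ``hard'' subfamily of $\mathcal{D}(\cdot)$ and apply an information-theoretic argument (Assouad's lemma, with Fano's inequality as an alternative), the only genuinely new ingredient being the verification that the hard family really sits inside the prescribed margin class. First I would fix a maximal packing $\{z_1,\ldots,z_k\}$ of the ball $\mathcal{B}(0,M/2)$; a volume comparison in $\mathbb{R}^d$ forces the packing radius to be of order $Mk^{-1/d}$, so the $z_j$'s are pairwise at distance $\asymp Mk^{-1/d}$ and stay well away from $\partial\mathcal{B}(0,M)$. Around each $z_j$ I place a two-atom cluster $\{z_j\pm\tfrac{\Delta}{2}u_j\}$, with $u_j$ a fixed unit vector and $\Delta$ a small fraction of the packing radius, carrying total mass $1/k$; for a sign pattern $\sigma\in\{-1,+1\}^k$ the distribution $P_\sigma$ then assigns masses $\tfrac{1}{2k}(1+\sigma_j\lambda)$ and $\tfrac{1}{2k}(1-\sigma_j\lambda)$ to the two atoms of cluster $j$, where the imbalance $\lambda\in(0,1)$ is to be calibrated.

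Next I would describe the geometry of the minimizers. Since the clusters are well separated ($\asymp Mk^{-1/d}$) and internally tiny ($\asymp\Delta$), Proposition \ref{prop:centroid_condition} together with a separation argument shows that for each $\sigma$ there is a unique $\c^*_\sigma\in\mathcal{M}$ with exactly one codepoint per cluster, located at the weighted centroid, i.e.\ at signed distance $\tfrac{\lambda\Delta}{2}\sigma_j$ from $z_j$ along $u_j$. Using the centroid condition in the form $R_{dist}(\c)-R_{dist}(\c^*_\sigma)=\sum_j P(V_j(\c^*_\sigma))\,\|c_j-(c^*_\sigma)_j\|^2$ whenever $\c$ induces the same Voronoi partition as $\c^*_\sigma$, any codebook that puts its $j$-th codepoint on the wrong side of $z_j$ pays at least $\asymp\tfrac{1}{k}\lambda^2\Delta^2$, so the excess distortion dominates $\tfrac{1}{k}\lambda^2\Delta^2$ times the Hamming distance between $\sigma$ and the sign pattern read off from $\hat{\c}$. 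For $\sigma,\sigma'$ differing in one coordinate, $P_\sigma$ and $P_{\sigma'}$ differ only on one cluster of mass $1/k$, hence $\mathrm{KL}(P_\sigma^{\otimes n}\|P_{\sigma'}^{\otimes n})\asymp n\lambda^2/k$; choosing $\lambda,\Delta$ so that this stays a small constant while $\Delta\asymp Mk^{-1/d}$, Assouad's lemma gives
\[
\inf_{\hat{\c}}\ \sup_{\sigma}\ \mathbb{E}_\sigma\bigl[R_{dist}(\hat{\c})-R_{dist}(\c^*_\sigma)\bigr]\ \gtrsim\ k\cdot\tfrac{1}{k}\lambda^2\Delta^2\ =\ \lambda^2\Delta^2 ,
\]
which should yield the announced $M^2 k^{1/2-1/d}/\sqrt{n}$ after inserting the calibrated values (this is also where $k\ge 3$ and $n\ge 3k/2$ enter, to keep the packing and the calibration mutually consistent).

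It then remains to check $P_\sigma\in\mathcal{D}(c_1Mk^{-1/d},c_2Mk^{-1/d},c_3/k,c_4M^2k^{-2/d}/\sqrt{n})$, uniformly in $\sigma$. The bound $B\gtrsim Mk^{-1/d}$ is immediate, since the optimal codepoints are $O(\lambda\Delta)$-close to the $z_j$'s; $p_{min}\ge c_3/k$ holds because each Voronoi cell of $\c^*_\sigma$ carries exactly one cluster; $r_0\gtrsim Mk^{-1/d}$ follows because $P_\sigma$ is finitely supported with every atom at distance $\gtrsim Mk^{-1/d}$ from every optimal Voronoi boundary, so that $p(t)\equiv 0$ for $t\le r_0$ exactly as in the finitely supported example of Section \ref{subsec:MC_instances}; and the separation factor $\varepsilon$ is bounded below by the least excess distortion of a stationary but non-optimal codebook, which is forced to merge two clusters or split one and therefore costs $\gtrsim\tfrac{1}{k}(Mk^{-1/d})^2\asymp M^2k^{-1-2/d}$.

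The main obstacle is the joint calibration of $\Delta$ and $\lambda$: they must be small enough that one-coordinate-apart hypotheses are indistinguishable from $n$ samples, large enough that the per-coordinate loss gap is as wide as possible, and still compatible with \emph{all four} thresholds of $\mathcal{D}(\cdot)$ --- most delicately with the separation-factor floor $c_4M^2k^{-2/d}/\sqrt{n}$, which, unlike the other three, itself decays with $n$. Reconciling what a naive Assouad computation produces with the slow $1/\sqrt{n}$ rate demanded here, by keeping $\varepsilon$ above that floor, is the real point of the proof; the remainder is routine bookkeeping on the packing geometry and on Kullback--Leibler divergences between finitely supported measures.
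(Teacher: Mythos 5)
The paper does not actually prove this statement --- it is imported verbatim from \cite[Proposition 3.1]{Levrard15} --- so the only in-paper point of comparison is the proof of Proposition \ref{prop:minimax_new}, which uses essentially the construction you propose (well-separated clusters of mass $1/k$, an internal tilt of size $\lambda$ that shifts the optimal codepoint by $\lambda\Delta/2$, Assouad/Hellinger) and obtains the \emph{fast} $M^2k^{1-2/d}/n$ lower bound. This is not a coincidence, and it is where your proposal has a genuine gap: in a two-atom cluster of mass $1/k$ with tilt $\lambda$, the per-coordinate excess distortion for guessing the wrong sign is $\asymp\lambda^2\Delta^2/k$ (quadratic in $\lambda$, by the very centroid identity you invoke), while the per-sample Kullback--Leibler divergence between hypotheses at Hamming distance one is $\asymp\lambda^2/k$. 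Assouad therefore caps your bound at $k\cdot\lambda^2\Delta^2/k$ subject to $\lambda^2\lesssim k/n$, i.e.\ at $M^2k^{1-2/d}/n$. Reaching the claimed $M^2k^{1/2-1/d}/\sqrt{n}$ would require $\lambda^2\asymp k^{1/2+1/d}/\sqrt{n}$, which is compatible with $\lambda^2\lesssim k/n$ only when $n\lesssim k^{1-2/d}$, and this is excluded by $n\geq 3k/2$. So the calibration you defer to the last paragraph is not merely delicate; it is impossible within your construction, and the obstacle you yourself flag there is unresolved.

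The fix is to make the per-coordinate loss \emph{linear} in the tilt parameter, which is what the construction behind \cite[Proposition 3.1]{Levrard15} (following Bartlett--Linder--Lugosi and Antos) does by turning the hypothesis into a discrete \emph{allocation} problem rather than a location problem: take more two-atom pairs than can each receive two codepoints (whence $k\geq 3$, the ratio $3/2$ between codepoints and pairs, and the condition $n\geq 3k/2$), tilt the \emph{total masses} of the pairs by $1\pm\delta$, and note that a codebook assigning two codepoints to a light pair while leaving a heavy pair with a single codepoint at its midpoint incurs an excess $\asymp\delta\Delta^2/k$ --- linear in $\delta$ --- while the KL divergence per sample is still $\asymp\delta^2/k$. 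Choosing $\delta\asymp\sqrt{k/n}$ then yields $k\cdot\delta\Delta^2/k\asymp M^2k^{1/2-1/d}/\sqrt{n}$. Note also that in that construction the wrong allocations are stationary points whose excess distortion is only $O(M^2k^{-2/d}/\sqrt{n})$, which is precisely why the class $\mathcal{D}$ in the statement must allow the separation factor $\varepsilon$ to decay like $1/\sqrt{n}$; your instances keep $\varepsilon$ bounded below independently of $n$, a further sign that they live in the fast-rate regime of Proposition \ref{prop:minimax_new} rather than the slow-rate regime targeted here. Your verification of the remaining class parameters ($B$, $r_0$, $p_{min}$) is fine and would carry over to the corrected construction.
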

     Thus, for a fixed choice of $r_0$, $B$ and $p_{min}$, the upper bound given by Theorem \ref{thm:fast_rates_distortion} turns out to be optimal if the separation factor $\varepsilon$ is allowed to be arbitrarily small (at least $\delta \lesssim kM^2/\sqrt{n}$).   When all these parameters are fixed, the following Proposition \ref{prop:minimax_new} ensures that the $1/n$ rate is optimal.
     
     \begin{prop}\label{prop:minimax_new}
     Let $d=dim(\mathcal{H})$. Assume that $ n \geq k$, then there exist constants $c_1$, $c_2$, $c_3$ and $c_0$ such that 
     \begin{align*}
     \inf_{\hat{\c}} \sup_{P \in \mathcal{D}(c_1Mk^{-1/d},c_2Mk^{-{1/d}},1/k,c_3M^2 k^{-(1+2/d})} \mathbb{E} \left [ R_{dist}(\hat{\c}) - R_{dist}(\c^*) \right ] \geq c_0 \frac{M^2k^{1- \frac{2}{d}}}{n}.
     \end{align*}
     \end{prop}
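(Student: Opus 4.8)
The plan is to prove this minimax lower bound by a hypercube (Assouad-type) reduction. I would construct a family $\{P_\omega : \omega\in\{-1,+1\}^k\}$ of distributions, all lying in the prescribed class, that are pairwise hard to distinguish from $n$ observations yet whose optimal codebooks differ enough to force excess distortion of order $M^2k^{1-2/d}/n$. Concretely: choose centers $z_1,\dots,z_k\in\mathcal{B}(0,M/2)$ with pairwise distances at least $cMk^{-1/d}$ — possible for an absolute constant $c>0$ by a volume/packing argument —, fix a unit vector $u$, set $a=a_0Mk^{-1/d}u$ for a small absolute constant $a_0$, and fix the perturbation level $\rho:=\tfrac18\sqrt{k/n}\leq\tfrac18$ (using $n\geq k$). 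Let $P_\omega=\tfrac1k\sum_{i=1}^k\mu_{i,\omega_i}$, where $\mu_{i,s}$ puts mass $\tfrac12+s\rho$ on $z_i+a$ and $\tfrac12-s\rho$ on $z_i-a$, so that the barycenter of $\mu_{i,s}$ is $z_i+2s\rho a$. Since $a_0\ll c$, for the codebook $\c^*_\omega:=(c^*_{\omega,i})_{i\leq k}$ with $c^*_{\omega,i}:=z_i+2\omega_i\rho a$, the Voronoi cell of $c^*_{\omega,i}$ contains exactly the two atoms of cluster $i$, and the centroid/parallel-axis identity gives $R_{dist}(\c)-R_{dist}(\c^*_\omega)=\sum_i\tfrac1k\|c_{j(i)}-c^*_{\omega,i}\|^2$ for every codebook $\c$ whose codepoints $c_{j(1)},\dots,c_{j(k)}$ lie one per cluster, near the $z_i$'s.

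Next I would check that each $P_\omega$ belongs to $\mathcal{D}(c_1Mk^{-1/d},c_2Mk^{-1/d},1/k,c_3M^2k^{-(1+2/d)})$ for suitable absolute $c_1,c_2,c_3$. Boundedness by $M$ is clear. As $P_\omega$ is finitely supported and $N(\c^*_\omega)$ stays at distance $\gtrsim Mk^{-1/d}$ from the $2k$ atoms — a direct estimate from the separation of the $z_i$'s and $a_0$ small — it satisfies a margin condition with radius $r_0\geq c_2Mk^{-1/d}$, indeed $p(t)=0$ for $t<r_0$. Moreover $\min_{i\neq j}\|c^*_{\omega,i}-c^*_{\omega,j}\|\geq cMk^{-1/d}-4\rho\|a\|\geq c_1Mk^{-1/d}$, and each cell of $\c^*_\omega$ has mass exactly $1/k$, so $p_{min}=1/k$. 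The remaining — and delicate — point is the separation factor: one shows that every stationary codebook that is not globally optimal must fail to assign exactly one codepoint per cluster, and that, again by the parallel-axis identity and the separation of the $z_i$'s, each such ``merge/split'' raises the distortion by at least $(c^2/2-O(a_0^2))M^2k^{-(1+2/d)}$ per affected cluster. This simultaneously yields that $\c^*_\omega$ is the unique minimizer up to relabeling and that $\varepsilon(P_\omega)\geq c_3M^2k^{-(1+2/d)}$ for an absolute $c_3>0$. Running this case analysis to the end (merges of several clusters, ``homeless'' and duplicated codepoints) and pinning the constants so that the four requirements hold simultaneously for every $n\geq k$ is the main, and most tedious, step; the rest is bookkeeping.

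For the information-theoretic half, if $\omega$ and $\omega'$ differ only in coordinate $i$ then $P_\omega$ and $P_{\omega'}$ agree outside cluster $i$, where
\[
\mathrm{KL}(P_\omega\,\|\,P_{\omega'})=\frac{2\rho}{k}\log\frac{\tfrac12+\rho}{\tfrac12-\rho}\leq\frac{16\rho^2}{k}\qquad(\rho\leq\tfrac14),
\]
so $\mathrm{KL}(P_\omega^{\otimes n}\,\|\,P_{\omega'}^{\otimes n})=n\,\mathrm{KL}(P_\omega\|P_{\omega'})\leq 16n\rho^2/k=\tfrac14$ by the choice of $\rho$, and hence $\mathrm{TV}(P_\omega^{\otimes n},P_{\omega'}^{\otimes n})\leq\tfrac12$ by Pinsker's inequality. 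For the geometric half, given a codebook $\hat\c$ set $\hat\omega_i(\hat\c)=\operatorname{sign}\langle\hat c_{j(i)}-z_i,a\rangle$, with $j(i)$ the index of the codepoint of $\hat\c$ closest to $z_i$. Combining the quadratic identity above on the ``faithful'' configurations with the per-cluster mis-assignment penalty on all others, and using $c_3M^2k^{-(1+2/d)}\geq 4\rho^2\|a\|^2/k$ (valid since $n\geq k$ and $a_0$ is small), one obtains the pointwise bound
\[
R_{dist}(\hat\c)-R_{dist}(\c^*_\omega)\;\geq\;\frac{4\rho^2\|a\|^2}{k}\,d_H(\hat\omega(\hat\c),\omega)\qquad\text{for all }\hat\c,\ \omega,
\]
the key point being that an incorrect recovery of $\omega_i$ puts $\hat c_{j(i)}$ on the wrong side of the hyperplane through $z_i$ orthogonal to $a$, hence at distance $\geq 2\rho\|a\|$ from $c^*_{\omega,i}$.

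The conclusion now follows from Assouad's lemma, which combines the pointwise bound with the standard estimate $\inf_{\hat\omega}\max_\omega\mathbb{E}_\omega\,d_H(\hat\omega,\omega)\geq\tfrac k2\bigl(1-\max_{d_H(\omega,\omega')=1}\mathrm{TV}(P_\omega^{\otimes n},P_{\omega'}^{\otimes n})\bigr)\geq k/4$ to yield
\[
\inf_{\hat\c}\ \sup_{P\in\mathcal{D}}\ \mathbb{E}\bigl[R_{dist}(\hat\c)-R_{dist}(\c^*)\bigr]\;\geq\;\frac{4\rho^2\|a\|^2}{k}\cdot\frac k4\;=\;\rho^2\|a\|^2\;=\;\frac{a_0^2}{64}\cdot\frac{M^2k^{1-2/d}}{n},
\]
which is the claim with $c_0=a_0^2/64$. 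One may equivalently argue Bayesianly under the uniform prior on $\omega$: the cluster memberships of the sample are observable (the clusters sit in disjoint balls), so the data split into $k$ independent problems of recovering $\omega_i$ from a $\mathrm{Bin}(n,1/k)$ number of $\mathrm{Bernoulli}(\tfrac12+\omega_i\rho)$ flips, each contributing $\gtrsim\tfrac1k\rho^2\|a\|^2$ to the Bayes excess distortion because $\tfrac nk\rho^2\lesssim 1$. As stressed, the only genuinely delicate ingredient of the whole argument is verifying that all $P_\omega$ lie in the prescribed class with $\varepsilon$ bounded below by $c_3M^2k^{-(1+2/d)}$.
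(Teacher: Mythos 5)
Your construction is essentially the paper's: an Assouad hypercube of $k$ well-separated clusters, each carrying a $\pm$ tilt of its barycenter calibrated (via $n\geq k$) so that neighbouring products stay at constant total variation, together with the same two-step geometric reduction — first force any codebook down to a ``faithful'' one assigning a single codepoint per cluster via a merge/split case analysis, then use the centroid/parallel-axis identity to convert excess distortion into a Hamming distance and invoke Assouad's lemma. The only differences are cosmetic (two-point atoms with tilted weights instead of tilted uniform densities on segments, KL/Pinsker instead of Hellinger), and the step you defer — the per-affected-cluster penalty for non-faithful codebooks and the bound $\varepsilon\gtrsim M^2k^{-(1+2/d)}$ — is precisely the content of the paper's Lemma \ref{lem:minimax_new_risks}, carried out the way you sketch.
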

A proof of Proposition \ref{prop:minimax_new} can be found in Section \ref{subsec:minimax_proofs}. Proposition \ref{prop:minimax_new} ensures that the $1/n$-rate is optimal on the class of distributions satisfying a margin condition with fixed parameters. Concerning the dependency in $k$, note that Proposition \ref{prop:minimax_new} allows for $d = + \infty$, leading to a lower bound in $k$. In this case the lower bound differs from the upper bound given in Theorem \ref{thm:fast_rates_distortion} up to a $1/p_{min} \sim k$ factor. A question raised by the comparison of Proposition \ref{prop:minimax_old} and Proposition \ref{prop:minimax_new} is the following: can we retrieve the $1/\sqrt{n}$ rate when allowing other parameters such as $B_-$ or $r_{0,-}$ to be small enough and $\varepsilon_-$ fixed? A partial answer is provided by the following structural result, that connects the different quantities involved in the margin condition.

\begin{prop}\label{prop:connections_B_epsilon_r0}
Assume that $P$ satisfies a margin condition with radius $r_0$. Then the following properties hold.
\begin{itemize}
\item[$i)$]   $\varepsilon \leq \frac{B^2}{4}$.
\item[$ii)$] $r_0 \leq B$.
\end{itemize}
\end{prop}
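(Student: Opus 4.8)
I would prove the two items separately, in each case by producing an explicit object: for $i)$ a stationary codebook that is not optimal, and for $ii)$ a lower bound on the $P$-mass lying close to a separating hyperplane.

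For part $i)$, note first that $\mathcal{M}$ is finite (Proposition~\ref{prop:properties_under_MC}), so that $B$ is attained; fix $\c^* \in \mathcal{M}$ and $i \neq j$ with $\|c_i^* - c_j^*\| = B$, and set $p_i = P(V_i(\c^*))$, $p_j = P(V_j(\c^*))$, both positive by Proposition~\ref{prop:structure_minimizers}. Let $\c^{(k-1)}$ be an optimal $(k-1)$-point codebook, which exists by Proposition~\ref{prop:structure_minimizers} applied with $k-1$ in place of $k$ (the support contains more than $k > k-1$ points). Comparing $\c^{(k-1)}$, via the fact that a nearest-neighbour quantizer beats any fixed partition with the same codepoints, to the $(k-1)$-point codebook $\{c_\ell^* : \ell \neq i, j\} \cup \{m\}$ with $m = (p_i c_i^* + p_j c_j^*)/(p_i + p_j)$ the $P$-barycentre of $V_i(\c^*) \cup V_j(\c^*)$, the parallel axis identity together with the centroid condition (Proposition~\ref{prop:centroid_condition}) yields
\[
R_{dist}(\c^{(k-1)}) - R_{dist}(\c^*) \leq p_i \|c_i^* - m\|^2 + p_j \|c_j^* - m\|^2 = \frac{p_i p_j}{p_i + p_j}\|c_i^* - c_j^*\|^2 \leq \frac{B^2}{4},
\]
the last inequality by arithmetic--geometric mean and $p_i + p_j \leq 1$. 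Finally, turn $\c^{(k-1)}$ into a $k$-point codebook $\c'$ by repeating one codepoint in the last slot: then $R_{dist}(\c') = R_{dist}(\c^{(k-1)})$ (it defines the same quantizer), and $\c' \in \bar{\mathcal{M}}$, because the duplicated slot carries zero mass once the $W_\ell$'s are formed, while on the other slots the defining identity of $\bar{\mathcal{M}}$ coincides, up to the $P$-negligible boundary of the Voronoi diagram of $\c^{(k-1)}$, with the centroid condition satisfied by the optimal codebook $\c^{(k-1)}$ (Proposition~\ref{prop:centroid_condition}, applied with $k-1$). Since two codepoints of $\c'$ are equal whereas $B > 0$ forces distinct codepoints in any element of $\mathcal{M}$, we get $\c' \in \bar{\mathcal{M}} \setminus \mathcal{M}$ with excess distortion at most $B^2/4$, hence $\varepsilon \leq B^2/4$.

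For part $ii)$, the idea is that the centroid condition forces a fixed fraction of the mass to lie within distance $B$ of $N(\c^*)$. Keep $\c^*$ and $i \neq j$ with $\|c_i^* - c_j^*\| = B$; let $H_{ij} = \{x : \|x - c_i^*\| = \|x - c_j^*\|\} \subseteq N(\c^*)$, and let $f(x) = \langle x - (c_i^* + c_j^*)/2,\, (c_i^* - c_j^*)/B\rangle$ be the signed distance to $H_{ij}$. On $V_i(\c^*)$ one has $f \geq 0$, and since $f$ is affine and $c_i^*$ is the $P$-barycentre of $V_i(\c^*)$, $\mathbb{E}[f(X) \mid X \in V_i(\c^*)] = f(c_i^*) = B/2$. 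Markov's inequality then gives $P(f(X) \leq B \mid X \in V_i(\c^*)) \geq 1/2$, and $\{x \in V_i(\c^*) : 0 \leq f(x) \leq B\} \subseteq \mathcal{B}(H_{ij}, B) \subseteq \mathcal{B}(N(\c^*), B)$, so that $p(B) \geq P(\mathcal{B}(N(\c^*), B)) \geq p_i/2 \geq p_{min}/2$. If $r_0 \geq B$, applying the margin inequality at $t = B$ would give $p_{min}/2 \leq p(B) \leq (B p_{min}/(128 M^2)) B$, i.e.\ $B \geq 8M$; but $c_i^*$ and $c_j^*$ are $P$-barycentres of subsets of the support, hence lie in $\mathcal{B}(0, M)$, so $B \leq 2M$, a contradiction. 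Therefore $r_0 < B$.

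The two parallel axis computations and the verification that the duplicated codebook lies in $\bar{\mathcal{M}}$ are routine. The only genuinely inventive step, which I expect to be the main obstacle, is the observation used in $ii)$ that the centroid condition pins the conditional mean of the signed distance to the bisector of $c_i^*$ and $c_j^*$ at $B/2$, so that a one-line Markov bound already produces enough mass near $N(\c^*)$ to overload the margin inequality; the elementary bound $B \leq 2M$ then closes the argument.
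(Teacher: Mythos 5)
Your proposal is correct and follows essentially the same route as the paper's: for $i)$ you merge the two closest optimal codepoints into a stationary but non-optimal $k$-point codebook with excess distortion at most $B^2/4$, and for $ii)$ you use the centroid condition to pin the conditional mean of the signed distance to the bisector of $c_i^*$ and $c_j^*$ at $B/2$ and then contradict the margin bound at $t=B$ via $B \leq 2M$. The only cosmetic differences are that you collapse onto the $P$-barycentre rather than the midpoint in $i)$ (giving the marginally sharper bound $\frac{p_i p_j}{p_i+p_j}B^2$, and spelling out more carefully than the paper why the duplicated codebook lies in $\bar{\mathcal{M}} \setminus \mathcal{M}$), and that you phrase the first-moment bound in $ii)$ as Markov's inequality where the paper lower-bounds the Stieltjes integral $\int_{B}^{2M} t\, dp(t)$ after an integration by parts.
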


A proof of Proposition \ref{prop:connections_B_epsilon_r0} is given in Section \ref{subsec:proof_prop_connections_B_epsilon_r0}. Such a result suggests that finding distributions that have $B$ small enough whereas $\varepsilon$ or $r_0$ remains fixed is difficult. As well, it also indicates that the separation rate in terms of $B$ should be of order $M k^{-1/d} n^{-1/4}$. Slightly anticipating, this can be compared with the $n^{-1/4}$ rate for the minimal separation distance between two means of a Gaussian mixture to ensure a consistent classification, as exposed in \cite[Theorem 2]{Azizyan13}. 

\section{Convergence of the $k$-means algorithm}\label{sec:convergence_kmeans_algo}

      Up to now some results have been stated on the performance of an empirical risk minimizer $\hat{\c}_n$, in terms of distortion or classification. Finding such a minimizer is in practice intractable (even in the plane this problem has been proved $NP$-hard, \cite{Mahajan12}). Thus, most of $k$-means algorithms provide an approximation of such a minimizer. For instance, Lloyd's algorithm outputs a codebook $\hat{\c}_{KM,n}$ that is provably only a stationary point of the empirical distortion $\hat{R}_{dist}$. Similarly to the EM algorithm, such a procedure is based on a succession of iterations that can only decrease the considered empirical risk  $\hat{R}_{dist}$. Thus many random initializations are required to ensure that at least one of them falls into the basin of attraction of an empirical risk minimizer. 
      
      Interestingly, when such a good initialization has been found, some recent results ensure that the output $\ckm$ of Lloyd's algorithm achieves good classification performance, provided that the sample is in some sense well-clusterable. For instance, under the model-based assumption that $X$ is a mixture of sub-Gaussian variables with means $\m$ and maximal variances $\sigma^2$, \cite[Theorem 3.2]{Lu16} states that, provided $\tilde{B}/\sigma$ is large enough, after more that $4\log(n)$ iterations from a good initialization Lloyd's  algorithm outputs a codebook with classification error less that $e^{-\tilde{B}^2/(16 \sigma^2)}$. Note that the same kind of results hold for EM-algorithm in the Gaussian mixture model, under the assumption that $\tilde{B}/\sigma$ is large enough and starting from a good initialization (see, e.g., \cite{Dasgupta07}).
      
      In the case where $P$ is not assumed to have a mixture distribution, several results on the classification risk $\hat{R}_{classif}(\ckm,\hat{\c}_n)$ are available, under clusterability assumptions.
  Note that this risk accounts for the misclassifications encountered by the output of Lloyd's algorithm compared to the empirical risk minimizer, in opposition to a latent variable classification as above.
  \begin{Def}{\cite[Definition 1]{Tang16}}\label{def:clusterability}
  A sample $X_1, \hdots, X_n$ is $f$-clusterable if there exists a minimizer $\hat{\c}_n$ of $\hat{R}_{dist}$ such that, for $j\neq i$,
  \begin{align*}
  \|\hat{\c}_{n,i} - \hat{\c}_{n,j}\| \geq f \sqrt{\hat{R}_{dist}(\hat{\c}_n}) \left ( \frac{1}{\sqrt{n_i}} + \frac{1}{\sqrt{n_j}} \right ),
  \end{align*}
  where $n_\ell$ denotes $| \left \{ i| \quad  X_i \in V_\ell(\hat{\c}_n) \right \}|$.
  \end{Def}     
     It is important to mention that other definitions of clusterability might be found, for instance in \cite{Kumar10,Awashti12}, each of them requiring that the optimal empirical codepoints are well-separated enough. Under such a clusterability assumption, the classification error of $\ckm$ can be proved small provided that a good initialization is chosen.
     \begin{thm}{\cite[Theorem 2]{Tang16}}\label{thm:classif_error_km}
     Assume that $X_1, \hdots, X_n$ is $f$-clusterable, with $f >32$ and let $\hat{\c}_n$ denote the corresponding minimizer of $\hat{R}_{dist}$. Suppose that the initialization codebook $\c^{(0)}$ satisfies 
     \[
     \hat{R}_{dist}(\c^{(0)}) \leq g \hat{R}_{dist}(\hat{\c}_n),
     \]
     with $g < \frac{f^2}{128} -1$. Then the outputs of Lloyd's algorithm satisfies
     \[
     \hat{R}_{classif}(\ckm,\hat{\c}_n) \leq \frac{81}{8f^2}.
     \]
\end{thm}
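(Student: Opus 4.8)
The only feature of Lloyd's iterations I would use is that each one can only decrease the empirical distortion, so the starting point is that every iterate, and in particular $\ckm$, satisfies $\hat{R}_{dist}(\ckm) \le \hat{R}_{dist}(\c^{(0)}) \le g\,\hat{R}_{dist}(\hat{\c}_n)$. Write $\Delta^{2} = \hat{R}_{dist}(\hat{\c}_{n})$ and, for each $j$, $D_j^{2} = \sum_{i :\, X_i \in V_j(\hat{\c}_n)} \| X_i - \hat{c}_{n,j}\|^{2}$; the empirical centroid condition makes $\hat{c}_{n,j}$ the barycenter of the points of $V_j(\hat{\c}_n)$, so $\sum_{j} D_j^{2} = n\Delta^{2}$, while $f$-clusterability reads $\| \hat{c}_{n,i} - \hat{c}_{n,j} \| \ge f\Delta\,( 1/\sqrt{n_i} + 1/\sqrt{n_j})$ for $i \ne j$.

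Next I would produce a correspondence between the codepoints of $\ckm$ and those of $\hat{\c}_n$. By Markov's inequality at least $n_j/2$ points of $V_j(\hat{\c}_n)$ lie in $\mathcal{B}(\hat{c}_{n,j}, \sqrt{2}\,D_j/\sqrt{n_j})$; all of them are assigned by $\ckm$ to some codepoint and $\sum_i \| X_i - Q_{\ckm}(X_i)\|^{2} = n\,\hat{R}_{dist}(\ckm) \le g\,n\Delta^{2}$, so, once the constants are chosen suitably, the codepoint of $\ckm$ nearest to $\hat{c}_{n,j}$ must lie within a distance $\rho_j$ of $\hat{c}_{n,j}$ equal to a small multiple of $f\Delta/\sqrt{n_j}$ — a farther nearest codepoint would make those $n_j/2$ points alone overspend the distortion budget. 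Denote this codepoint $\hat{c}_{KM,n,\sigma(j)}$. The map $\sigma$ is injective, since a codepoint lying within $\rho_j$ of $\hat{c}_{n,j}$ and within $\rho_{j'}$ of $\hat{c}_{n,j'}$ would force $\| \hat{c}_{n,j} - \hat{c}_{n,j'}\| \le \rho_j + \rho_{j'}$, contradicting $f$-clusterability as soon as $\rho_j + \rho_{j'} < f\Delta(1/\sqrt{n_j} + 1/\sqrt{n_{j'}})$; an injection of $\{1,\dots,k\}$ into itself is a permutation.

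I would then count the misclassified indices. Since $\hat{R}_{classif}(\ckm, \hat{\c}_n)$ is at most the misclassification rate computed with the permutation $\sigma$, it suffices to bound $\frac1n \sum_j |\{ i : X_i \in V_j(\hat{\c}_n),\ X_i \notin W_{\sigma(j)}(\ckm)\}|$. If $X_i \in V_j(\hat{\c}_n)$ is assigned by $\ckm$ to $\hat{c}_{KM,n,\sigma(j')}$ with $j' \ne j$, then $X_i$ is at least as close to that codepoint as to $\hat{c}_{KM,n,\sigma(j)}$, and combining this with $\| \hat{c}_{KM,n,\sigma(j)} - \hat{c}_{KM,n,\sigma(j')}\| \ge \| \hat{c}_{n,j} - \hat{c}_{n,j'}\| - \rho_j - \rho_{j'}$, $f$-clusterability, and $\| X_i - \hat{c}_{n,j}\| \ge \| X_i - \hat{c}_{KM,n,\sigma(j)}\| - \rho_j$, one obtains $\| X_i - \hat{c}_{n,j}\| \gtrsim f\Delta/\sqrt{n_j}$, i.e. each such index carries at least a fixed multiple of $f^{2}\Delta^{2}/n_j$ inside $D_j^{2}$. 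Summing over $i$ and $j$, using $\sum_j D_j^{2} = n\Delta^{2}$ and then tracking the absolute constants — which is where the numerical hypotheses $f > 32$ and $g < f^{2}/128 - 1$ get consumed — should collapse the bound to $\hat{R}_{classif}(\ckm, \hat{\c}_n) \le 81/(8f^{2})$.

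The delicate part, and the real obstacle, is this last bookkeeping: a crude version of the counting step produces cluster-size weights of the form $n_j/\sqrt{n_{j'}}$ that do not telescope cleanly, so one has to exploit the Lloyd structure beyond mere monotonicity — namely, after an update the codepoint $\hat{c}_{KM,n,\sigma(j)}$ is the barycenter of a cell that is mostly $V_j(\hat{\c}_n)$ plus a controlled number of strays, hence far closer to $\hat{c}_{n,j}$ than the loose bound $\rho_j$, so the ``small distortion $\Rightarrow$ few strays'' estimate can be iterated and the misclassification does not pile up. Checking that the hypothesis $g < f^{2}/128 - 1$ is exactly what places $\c^{(0)}$ inside the region where this loop closes, and that the closure constant is $81/(8f^{2})$, is the main thing to get right.
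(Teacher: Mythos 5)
First, a remark on the comparison itself: the paper does not prove this statement --- it is imported verbatim from \cite[Theorem 2]{Tang16} and used as a black box --- so your proposal can only be judged against the original argument of Tang and Monteleoni, not against anything in this manuscript.

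Your outline has the right overall shape (match each $\hat{c}_{n,j}$ to a nearby codepoint of $\ckm$, then charge each misclassified point a quadratic cost), but the matching step fails quantitatively, and this is a genuine gap rather than deferred bookkeeping. From monotonicity alone you only know $n\,\hat{R}_{dist}(\ckm)\le g\,n\Delta^2$, so the contradiction ``those $n_j/2$ points alone overspend the budget'' gives at best $\tfrac{n_j}{2}\bigl(\rho_j-\sqrt{2}D_j/\sqrt{n_j}\bigr)^2\le g\,n\Delta^2$, hence $\rho_j\asymp\sqrt{g\,n/n_j}\,\Delta\asymp\tfrac{f}{8}\sqrt{n/n_j}\,\Delta$ (using $D_j\le\sqrt{n}\Delta$ and $g<f^2/128$). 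That is a factor of order $\sqrt{n}$ larger than the ``small multiple of $f\Delta/\sqrt{n_j}$'' you assert, and in particular $\rho_j+\rho_{j'}$ exceeds the $f$-clusterability separation $f\Delta(1/\sqrt{n_j}+1/\sqrt{n_{j'}})$ once $n\gtrsim 64$. Consequently the injectivity of $\sigma$ collapses, and so does the lower bound $\|X_i-\hat{c}_{n,j}\|\gtrsim f\Delta/\sqrt{n_j}$ for misclassified points; no choice of absolute constants repairs this, because the obstruction scales with $n$. You correctly sense that one must use more than monotonicity of the distortion, but the remedy you postpone to the last paragraph --- that a Lloyd update makes each new codepoint the barycenter of a cell consisting of one $V_j(\hat{\c}_n)$ plus controlled strays, and that this can be iterated --- is not the final bookkeeping: it is the entire proof. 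The argument of \cite{Tang16} proceeds iteration by iteration, using the barycenter identity $\sum_{X\in S}\|X-c\|^2=\sum_{X\in S}\|X-\mu(S)\|^2+|S|\,\|\mu(S)-c\|^2$ on the pieces $V_j(\hat{\c}_n)\cap W_\ell(\c^{(t)})$ to maintain an invariant that each current center stays close to a distinct $\hat{c}_{n,j}$ and that the misclassified fraction of each cluster stays small; the hypothesis $g<f^2/128-1$ is what seeds this invariant at $t=0$. As written, your proposal establishes neither the base case nor the inductive step, so the bound $81/(8f^2)$ is not reached.
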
      
The requirement on the initialization codebook $\c^{(0)}$ is stated in terms of $g$-approximation of an empirical risk minimizer. Finding such approximations can be carried out using approximated $k$-means techniques ($k$-means $++$ \cite{Arthur07}), usual clustering algorithms (single Linkage \cite{Tang16}, spectral clustering \cite{Lu16}), or even more involved procedures as in \cite{Ostrovsky12} coming with complexity guarantees. All of them entail that a $g$-approximation of an empirical risk minimizer can be found with high probability (depending on $g$), that can be used as an initialization for the Lloyd's algorithm. 

Interestingly, the following Proposition allows to think of Definition \ref{def:clusterability} as a margin condition (Definition \ref{def:margincondition}) for the empirical distribution.

\begin{prop}\label{prop:connection_clusterability_empiricalmargin}
Let $\hat{p}(t)$, $\hat{B}$ and $\hat{p}_{min}$ denote the empirical counterparts of $p(t)$, $B$ and $p_{min}$. If
\[
\hat{p}\left( \frac{16M^2 f}{\sqrt{n \hat{p}_{min}}\hat{B}} \right ) \leq \hat{p}_{min},
\]
then $X_1, \hdots, X_n$ is $f$-clusterable.
\end{prop}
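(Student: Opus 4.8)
The plan is to verify Definition~\ref{def:clusterability} directly. First I would fix an arbitrary minimizer $\hat{\c}_n$ of $\hat{R}_{dist}$; one exists by Proposition~\ref{prop:structure_minimizers} applied to the (finitely supported, hence $M$-bounded) empirical distribution $P_n$, and it may be chosen with all its codepoints in $\mathcal{B}(0,M)$, since replacing any codepoint by its projection onto the convex hull of $\{X_1,\dots,X_n\}$ cannot increase $\hat{R}_{dist}$. For such a $\hat{\c}_n$ I would bound the two sides of the clusterability inequality separately. On the left, $\|\hat{\c}_{n,i}-\hat{\c}_{n,j}\|\geq\hat{B}$ for every $i\neq j$, by definition of $\hat{B}$. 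On the right, $n_\ell=nP_n(V_\ell(\hat{\c}_n))\geq n\hat{p}_{min}$ by definition of $\hat{p}_{min}$, so $n_i^{-1/2}+n_j^{-1/2}\leq 2(n\hat{p}_{min})^{-1/2}$, and comparing $\hat{\c}_n$ with the constant codebook $(X_1,\dots,X_1)$ together with $M$-boundedness gives $\hat{R}_{dist}(\hat{\c}_n)\leq\frac1n\sum_i\|X_i-X_1\|^2\leq 4M^2$. Plugging these in, the whole proposition reduces to establishing $\hat{B}\geq 4Mf/\sqrt{n\hat{p}_{min}}$.

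The second step is to read this off from the hypothesis. The map $t\mapsto\hat{p}(t)$ is non-decreasing, and $\hat{p}(2M)=1$: when $k\geq2$ the midpoint $(\hat{\c}_{n,1}+\hat{\c}_{n,2})/2$ belongs both to $N(\hat{\c}_n)$ and to $\mathcal{B}(0,M)$, so every $X_i$ lies within distance $2M$ of $N(\hat{\c}_n)$, whence $P_n(\mathcal{B}(N(\hat{\c}_n),2M))=1$. On the other hand $\hat{p}_{min}\leq 1/k<1$, because the Voronoi cells cover $\mathcal{H}$ and therefore $\sum_\ell P_n(V_\ell(\hat{\c}_n))\geq1$. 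Hence the assumption $\hat{p}(t_0)\leq\hat{p}_{min}$ with $t_0=16M^2f/(\sqrt{n\hat{p}_{min}}\,\hat{B})$, combined with monotonicity and $\hat{p}(2M)=1>\hat{p}_{min}$, forces $t_0<2M$ (otherwise $\hat{p}(t_0)\geq\hat{p}(2M)>\hat{p}_{min}$); rearranging, this is precisely $\hat{B}>8Mf/\sqrt{n\hat{p}_{min}}\geq 4Mf/\sqrt{n\hat{p}_{min}}$, which finishes the argument.

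I do not anticipate a genuine obstacle. The only points deserving a few lines of care are that an empirically optimal codebook can be taken with pairwise distinct codepoints inside $\mathcal{B}(0,M)$, so that $\hat{B}>0$ and the computation $\hat{p}(2M)=1$ are meaningful, and that $\hat{p}_{min}<1$, so that the monotonicity comparison is not vacuous; both follow from the centroid condition (Proposition~\ref{prop:centroid_condition}) for $P_n$ together with the standing assumption that $\mathrm{supp}(P)$ contains more than $k$ points, while the truly degenerate situations ($k=1$, or fewer than $k$ distinct sample points, which force $N(\hat{\c}_n)=\mathcal{H}$ and $\hat{p}\equiv1$) make the statement vacuous. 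The factor-two gap between $8Mf/\sqrt{n\hat{p}_{min}}$ and the needed $4Mf/\sqrt{n\hat{p}_{min}}$ simply reflects that, as with the constant $1/128$ in~\eqref{majorationkappa}, the numerical constant $16$ here is not optimized.
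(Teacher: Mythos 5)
Your argument is correct, but it is a genuinely different route from the paper's. The paper proves the proposition by reducing it to Lemma~\ref{lem:connection_clusterability1_clusterabilityKK} (Lemma 10 of the Tang--Monteleoni supplement): it shows that the slab $\left\{ x \,:\, \|x_{rs}-\hat{c}_r\| \leq \|x_{rs}-\hat{c}_s\| + d_{rs} \right\} \cap V_s(\hat{\c}_n)$ is contained in $\mathcal{B}\left(N(\hat{\c}_n), \tfrac{8M}{\hat{B}}d_{rs}\right)$, takes $d_{rs} = 2Mf/\sqrt{n_{\min}}$ so that this radius is exactly the argument $16M^2f/(\sqrt{n\hat{p}_{min}}\hat{B})$ appearing in the hypothesis, and concludes that the slab has mass at most $\hat{p}_{min}$, which by that lemma forces $\|\hat{c}_r - \hat{c}_s\| \geq d_{rs}$. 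You instead exploit only the \emph{non-vacuity} of the hypothesis: since $\hat{p}$ is non-decreasing with $\hat{p}(2M)=1 > \hat{p}_{min}$, the hypothesis forces its argument to be below $2M$, i.e. $\hat{B} > 8Mf/\sqrt{n\hat{p}_{min}}$, which is twice what the crude bounds $\sqrt{\hat{R}_{dist}(\hat{\c}_n)} \leq 2M$ and $n_\ell \geq n\hat{p}_{min}$ require; your bookkeeping of the degenerate cases ($\hat{p}_{min}<1$, distinct codepoints in $\mathcal{B}(0,M)$) is the right list and is handled adequately. What each approach buys: yours is elementary and self-contained, avoids importing the external lemma, and incidentally sidesteps the small gap in the paper's proof where the lemma requires a \emph{strict} inequality $< \hat{p}_{min}$ while the hypothesis only supplies $\leq$; it also reveals that, for this particular conclusion, the stated hypothesis (with the constant $16$) is stronger than needed. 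The paper's geometric argument, on the other hand, is the one that actually uses the margin-type control of the mass near the empirical decision boundary, and it is this mechanism---not the numerology of the constant---that makes Proposition~\ref{prop:connection_clusterability_empiricalmargin} the conceptual bridge between Definition~\ref{def:margincondition} and Definition~\ref{def:clusterability} exploited later in Theorem~\ref{thm:margin_condition_gives_clusterability}. Both proofs are valid; if you submit yours, you should flag explicitly that it does not use the full strength of the boundary-mass bound.
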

A proof of Proposition \ref{prop:connection_clusterability_empiricalmargin} can be found in Section \ref{sec:Proof of Proposition prop:connection_clusterability_empiricalmargin}. Intuitively, it seems likely that if $X_1, \hdots, X_n$ is drawn from a distribution $P$ that satisfies a margin condition, then $X_1, \hdots, X_n$ is clusterable in the sense of Definition \ref{def:clusterability}. This is formalized by the following  Theorem.

\begin{thm}\label{thm:margin_condition_gives_clusterability}
Assume that $P$ satisfies a margin condition. Let $p>0$. Then, for $n$ large enough, with probability larger than $1-3n^{-p} - e^{- \frac{n}{2M^4}\left ( (\delta - \frac{12kM^2}{\sqrt{n}}) \right )^2}$, $X_1, \hdots, X_n$ is $ \sqrt{p_{min} n}$-clusterable. Moreover, on the same event, we have
\begin{align*}
\| \hat{\c}_n - \ckm \| \leq \frac{60M}{n p_{min}^2}.
\end{align*}
\end{thm}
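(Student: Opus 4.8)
The plan is to localize $\hat{\c}_n$ around a fixed optimal codebook $\c^*$ on a high-probability event, to verify the clusterability condition of Definition \ref{def:clusterability} with $f=\sqrt{p_{min}n}$ directly on that event, and then to feed the result into Theorem \ref{thm:classif_error_km} (for a suitably initialised run of Lloyd's algorithm) and translate the resulting control of $\hat{R}_{classif}(\ckm,\hat{\c}_n)$ into a bound on $\|\hat{\c}_n-\ckm\|$.

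\emph{Step 1 (a good event).} I would start from the deviation counterpart \eqref{eq:deviation_proba_in_basin} of Theorem \ref{thm:fast_rates_distortion}: for $n$ large enough that $\delta>12kM^2/\sqrt n$, with probability at least $1-n^{-p}-e^{-\frac{n}{2M^4}(\delta-12kM^2/\sqrt n)^2}$, the minimiser $\hat{\c}_n$ lies in the basin of a fixed $\c^*\in\mathcal M$, i.e. $\|\hat{\c}_n-\c^*\|\le\frac{Br_0}{4\sqrt2 M}$, with in addition $R_{dist}(\hat{\c}_n)-R_{dist}(\c^*)\le C(k+\log|\bar{\mathcal{M}}|+p\log n)M^2/(np_{min})$. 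Plugging this into the local convexity bound \eqref{eq:localconvexity} of Proposition \ref{prop:properties_under_MC} produces a \emph{deterministic} sequence $\rho_n\to0$, of order $M\sqrt{(k+\log|\bar{\mathcal{M}}|+p\log n)/n}/p_{min}$, with $\|\hat{\c}_n-\c^*\|\le\rho_n$ on this event. I would then add on the same event two applications of Hoeffding's inequality (union bounded over the finitely many $\c^*\in\mathcal M$ and $j\le k$): one giving $\hat{R}_{dist}(\hat{\c}_n)\le\hat{R}_{dist}(\c^*)\le R_{dist}(\c^*)+o(1)$ since $\gamma(\c^*,\cdot)\le4M^2$; and one controlling $P_n$ on the \emph{fixed} sets $V_j(\c^*)$ and $\mathcal B(N(\c^*),h_n)$, where $h_n:=8M\rho_n/B$ is a deterministic upper bound on how far the Voronoi faces move, so that $V_j(\c^*)\setminus\mathcal B(N(\c^*),h_n)\subseteq V_j(\hat{\c}_n)$. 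Combining these with \eqref{majorationkappa} (applicable since $h_n\le r_0$ eventually) gives $P_n(V_j(\hat{\c}_n))\ge p_{min}-\tfrac{p_{min}\rho_n}{16M}-o(1)$, hence for $n$ large $\hat p_{min}\ge p_{min}/2$, $\hat B\ge B-2\rho_n\ge B/2$, and $\hat R_{dist}(\hat{\c}_n)=R_{dist}(\c^*)+o(1)$. The three $n^{-p}$ terms of the statement come precisely from these three uses of concentration.

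\emph{Step 2 (clusterability).} On this event I would check Definition \ref{def:clusterability} with $f=\sqrt{p_{min}n}$ by hand --- Proposition \ref{prop:connection_clusterability_empiricalmargin} is far too lossy for so large an $f$. For $i\neq j$ one has $\|\hat{\c}_{n,i}-\hat{\c}_{n,j}\|\ge\hat B$ while, using $n_\ell\ge n\hat p_{min}$,
\[
f\sqrt{\hat R_{dist}(\hat{\c}_n)}\Bigl(\tfrac1{\sqrt{n_i}}+\tfrac1{\sqrt{n_j}}\Bigr)\le \sqrt{p_{min}n}\,\sqrt{\hat R_{dist}(\hat{\c}_n)}\,\frac{2}{\sqrt{n\hat p_{min}}}=2\sqrt{\tfrac{p_{min}}{\hat p_{min}}}\,\sqrt{\hat R_{dist}(\hat{\c}_n)},
\]
so it suffices that $\hat B\ge 2(1+o(1))\sqrt{R_{dist}(\c^*)+o(1)}$, which, given $\hat B\ge B-2\rho_n$, holds for $n$ large as soon as $R_{dist}(\c^*)<B^2/4$. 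This inequality is the structural input I need from the margin condition, and it is the main obstacle: one must prove a sharp bound of the form $R_{dist}(\c^*)\le B^2/8$, which genuinely uses \eqref{majorationkappa} and does not follow from the trivial estimates $R_{dist}(\c^*)\le M^2$, $B\le 2M$. I would establish it by a slicing argument in the spirit of Proposition \ref{prop:connections_B_epsilon_r0}: if some cell $V_j(\c^*)$ carried non-negligible mass at distance $\gtrsim B$ from $c_j^*$, the centroid condition (Proposition \ref{prop:centroid_condition}) forces balancing mass on the opposite side of $c_j^*$; that mass, being confined to within $\le r_{i,j}(\c^*)/2$ of $c_j^*$ toward a neighbouring centre, lies in a thin neighbourhood of $N(\c^*)$ whose $P$-mass then exceeds the linear rate allowed by \eqref{majorationkappa}. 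The constant $B^2/8$ is sharp (tangent uniform balls attain it), but it leaves a factor $2$ of room below $B^2/4$, so the estimate above closes and $X_1,\dots,X_n$ is $\sqrt{p_{min}n}$-clusterable on the good event.

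\emph{Step 3 (back to the codebook).} For $n$ large, $f=\sqrt{p_{min}n}>32$ and the initialisation requirement of Theorem \ref{thm:classif_error_km} ($g$-approximation with $g<\tfrac{p_{min}n}{128}-1$) is met by any standard warm start; assuming $\ckm$ is produced from such an initialisation, Theorem \ref{thm:classif_error_km} gives $\hat R_{classif}(\ckm,\hat{\c}_n)\le\tfrac{81}{8f^2}=\tfrac{81}{8p_{min}n}$. After relabelling $\ckm$ so that the optimal permutation is the identity, this means the number $m$ of data points assigned to different cells by $\mathcal C(\hat{\c}_n)$ and $\mathcal C(\ckm)$ satisfies $m\le\tfrac{81}{8p_{min}}$. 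Both $\hat{\c}_n$ and $\ckm$ satisfy the empirical centroid condition on their own cells, so each $\hat c_{n,j}$, $\hat c_{KM,n,j}$ is the mean of a cell of size $\ge np_{min}/2-m\ge np_{min}/4$ (for $n$ large); writing the difference of two such means through the symmetric difference of the cells and using $\|X_i\|\le M$ gives $\|\hat c_{n,j}-\hat c_{KM,n,j}\|\lesssim \tfrac{M}{np_{min}}\,|C_j(\hat{\c}_n)\triangle C_j(\ckm)|$, and since $\sum_j|C_j(\hat{\c}_n)\triangle C_j(\ckm)|=2m$, summing squares over $j$ yields the announced $\|\hat{\c}_n-\ckm\|\le 60M/(np_{min}^2)$ after routine bookkeeping of constants. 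Apart from the structural bound of Step 2, the only remaining care is to collect all the ``$n$ large enough'' thresholds (they depend on $M,B,r_0,p_{min},\varepsilon$ and $p$).
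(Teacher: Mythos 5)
Your Steps 1 and 3 track the paper's own proof closely: the localization of $\hat{\c}_n$ via \eqref{eq:majoration_proba_outofbasin}--\eqref{eq:deviation_proba_in_basin} with $x=p\log n$, the Hoeffding controls of $\hat{p}_{min}$ and $\hat{B}$ uniformly over the finitely many $\c^*\in\bar{\mathcal{M}}$, and the final bound on $\|\hat{\c}_n-\ckm\|$ obtained by combining the misclassification count $m\lesssim 1/p_{min}$ from Theorem \ref{thm:classif_error_km} with the two empirical centroid conditions are exactly the ingredients of Section \ref{sec:proof_thm_margin_condition_gives_clusterability}.

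The gap is in Step 2. Your reduction of $\sqrt{p_{min}n}$-clusterability to the population inequality $B\ge 2\sqrt{R_{dist}(\c^*)}$ is the correct computation, but the structural lemma you invoke to close it --- that the margin condition forces $R_{dist}(\c^*)\le B^2/8$ --- is false. Take $P$ on the real line supported on $\{-R,-1,1,R\}$ with masses $(\epsilon,\tfrac12-\epsilon,\tfrac12-\epsilon,\epsilon)$ and $\epsilon R^2$ slightly below $1$ (say $R=30$, $\epsilon=10^{-3}$). The optimal $2$-codebook is the symmetric one, $c^*_\pm\approx\pm(1+2\epsilon(R-1))$, so $B\approx 2.12$, $p_{min}=\tfrac12$ and $R_{dist}(\c^*)\approx 2\epsilon R^2\approx 1.68$, while $N(\c^*)=\{0\}$ carries no mass within distance $1$, so $P$ satisfies the margin condition (as does any finitely supported distribution). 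Here $R_{dist}(\c^*)>B^2/4\approx 1.12$, so even the weaker inequality you need fails. Your slicing heuristic breaks because the centroid condition only balances first moments: the tiny mass at distance $\gg B$ from $c^*_j$ is compensated by a heavy atom displaced by only $O(\epsilon R)$, and that atom sits at distance $\approx r_{ij}(\c^*)/2$ from $N(\c^*)$, i.e.\ as far from the Voronoi boundary as possible, so \eqref{majorationkappa} never constrains it. Worse, since in the balanced case $n_i=n_j\approx np_{min}$ your inequality is asymptotically \emph{equivalent} to the $\sqrt{p_{min}n}$-clusterability of the sample, this example shows the conclusion of the theorem itself fails for such a $P$: no repair of Step 2 is possible under the stated hypotheses. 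Your direct verification in fact makes explicit a requirement that the paper's route hides --- Proposition \ref{prop:connection_clusterability_empiricalmargin} must be applied at $t=16M^2f/(\sqrt{n\hat{p}_{min}}\hat{B})\ge 8M$ when $f=\sqrt{p_{min}n}$, which exceeds $2M$ and lies outside the range $t\le r_0-r_n$ where the derived bound on $\hat{p}$ is valid. An additional hypothesis of the type $R_{dist}(\c^*)\le B^2/8$, or a smaller choice of $f$, is needed for either argument to go through.
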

A proof of Theorem \ref{thm:margin_condition_gives_clusterability} can be found in Section \ref{sec:proof_thm_margin_condition_gives_clusterability}. Combining Theorem \ref{thm:margin_condition_gives_clusterability} and Theorem \ref{thm:classif_error_km} ensures that whenever $P$ satisfies a margin condition, then with high probability the classification error of the $k$-means codebook starting from a good initialization, $\hat{R}_{classif}(\ckm,\hat{\c}_n)$, is of order $1/(np_{min})$. Thus, according to Corollary \ref{cor:link_distortion_classification}, the classification error $\hat{R}_{classif}(\ckm,\c^*(\ckm))$ should be of order $\sqrt{(k+\log(|\bar{\mathcal{M}}|)/n}$, for $n$ large enough. This suggests that the misclassifications of $\ckm$ are mostly due to the misclassifications of $\hat{\c}_n$, rather than the possible difference between $\hat{\c}_n$ and $\ckm$.

Combining the bound on $\|\hat{\c}_n - \ckm\|$ with a bound on $\|\hat{\c}_n - \c^*(\hat{\c}_n)\|$ that may be deduced from Theorem \ref{thm:fast_rates_distortion} and Proposition \ref{prop:properties_under_MC} may lead to guarantees on the distortion and classification risk $R_{dist}(\ckm)$ and $R_{classif}(\ckm,\c^*(\ckm))$. An illustration of this point is given in Corollary \ref{cor:classification_error_GM}.
 
 Note also that the condition on the initialization in Theorem \ref{thm:classif_error_km}, that is $ g \leq f^2/128-1$, can be written as $g \leq n p_{min}/2-1$ in the framework of Theorem \ref{thm:margin_condition_gives_clusterability}. Thus, for $n $ large enough, provided that $R_{dist}(\c^*) >0$, every initialization $\c^{(0)}$ turns out to be a good initialization.   
 
 \begin{cor}\label{cor:classification_error_GM}
 Under the assumptions of Proposition \ref{prop:GM_margincondition}, for $k=2$, $\Sigma_i = \sigma I_d$, and $p_{min}=1/2$, if $n$ is large enough then 
 \[
 \mathbb{E} R_{classif}\left ( \mathcal{C}(\ckm), \mathcal{C}(\m) \right ) \leq C \sigma \sqrt{\frac{\log(n)}{n}},
 \]
 where $\ckm$ denotes the output of the Lloyd's algorithm.
 \end{cor}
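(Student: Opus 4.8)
The plan is to combine three facts: that under the stated assumptions $P$ has a \emph{unique} optimal codebook $\c^*$ whose Voronoi partition coincides (up to a $P$-null set) with the Bayes partition $\mathcal{C}(\m)$; that, with overwhelming probability, $\ckm$ is close in norm to $\c^*$; and that codebook-closeness to $\c^*$ controls the excess classification risk against $\mathcal{C}(\c^*)$. Proposition~\ref{prop:GM_margincondition} (specialized to $k=2$, $\theta_i=1/2$, $\Sigma_i=\sigma^2 I_d$, $\sigma_-=\sigma$) already gives, for $\sigma/\tilde{B}$ small enough, that $\mathcal{M}=\{\c^*\}$, that $P$ satisfies a margin condition with $r_0=\tilde{B}/8$, and that $\|c_j^*-m_j\|\le c_1\sigma\sqrt{d}\le \tilde{B}/16$, hence $B=\|c_1^*-c_2^*\|\ge 7\tilde{B}/8$. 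The remaining structural point is $\mathcal{C}(\c^*)=\mathcal{C}(\m)$: in the symmetric two-component model the reflection $T$ across the bisecting hyperplane $H$ of $m_1,m_2$ fixes the truncation ball $\mathcal{B}(0,M)$ and leaves the density invariant, so $T\sharp P=P$ and Lemma~\ref{lem:optimal_invariance} gives $T(\mathcal{M})=\mathcal{M}$; since $c_j^*$ lies within $\tilde{B}/16$ of $m_j$ while $T$ exchanges a neighbourhood of $m_1$ with one of $m_2$, $T$ must swap $c_1^*$ and $c_2^*$. Therefore $(c_1^*+c_2^*)/2\in H$ and $c_2^*-c_1^*$ is parallel to $m_2-m_1$, so the Voronoi hyperplane of $\c^*$ equals $H$; as $P(N(\c^*))=0$, this yields $R_{classif}(\mathcal{C}(\ckm),\mathcal{C}(\m))=R_{classif}(\mathcal{C}(\ckm),\mathcal{C}(\c^*))$ for every $\ckm$.

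Next I would control $\|\ckm-\c^*\|$. Let $\mathcal{A}_n$ be the event of Theorem~\ref{thm:margin_condition_gives_clusterability}, of probability at least $1-3n^{-p}-e^{-\frac{n}{2M^4}(\delta-12kM^2/\sqrt{n})^2}$, intersected for $n$ large with the event that $\hat{\c}_n$ lies in the basin of $\c^*$ so that Proposition~\ref{prop:properties_under_MC}\,$(iv)$ applies, where $\delta=\frac{p_{min}B^2r_0^2}{64M^2}\wedge\varepsilon$. On $\mathcal{A}_n$, Theorem~\ref{thm:margin_condition_gives_clusterability} gives $\|\ckm-\hat{\c}_n\|\le 60M/(np_{min}^2)$ and Proposition~\ref{prop:properties_under_MC}\,$(iv)$ gives $\|\hat{\c}_n-\c^*\|^2\le\kappa_0(R_{dist}(\hat{\c}_n)-R_{dist}(\c^*))$, so by the triangle inequality, $(a+b)^2\le 2a^2+2b^2$ and Proposition~\ref{prop:centroid_condition},
\[
R_{dist}(\ckm)-R_{dist}(\c^*)\le \|\ckm-\c^*\|^2 \le \frac{2\cdot 60^2 M^2}{n^2 p_{min}^4} + 2\kappa_0\bigl(R_{dist}(\hat{\c}_n)-R_{dist}(\c^*)\bigr),
\]
which falls below $\delta$ for $n$ large. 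Corollary~\ref{cor:link_distortion_classification} (with closest optimal codebook $\c^*$) then gives $R_{classif}(\mathcal{C}(\ckm),\mathcal{C}(\c^*))\le \frac{\sqrt{p_{min}}}{16M}\sqrt{R_{dist}(\ckm)-R_{dist}(\c^*)}$.

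Finally I would take expectations. Splitting over $\mathcal{A}_n$ and $\mathcal{A}_n^c$ and bounding $R_{classif}\le 1$ on the latter, the complement contributes at most $3n^{-p}+e^{-cn}$, negligible for $p\ge 1$ and $n$ large. On $\mathcal{A}_n$, inserting the previous display, using $\sqrt{a+b}\le\sqrt{a}+\sqrt{b}$ and Jensen's inequality, and invoking Theorem~\ref{thm:fast_rates_distortion} (whose last two terms vanish for $n$ large, leaving $\mathbb{E}(R_{dist}(\hat{\c}_n)-R_{dist}(\c^*))\le C(k+\log|\bar{\mathcal{M}}|)M^2/(np_{min})$) gives
\[
\mathbb{E} R_{classif}(\mathcal{C}(\ckm),\mathcal{C}(\m)) \le \frac{\sqrt{p_{min}}}{16M}\left(\frac{60\sqrt{2}\,M}{np_{min}^2}+\sqrt{\frac{2\kappa_0 C(k+\log|\bar{\mathcal{M}}|)M^2}{np_{min}}}\right)+3n^{-p}+e^{-cn}.
\]
It then remains to substitute $k=2$, $p_{min}=1/2$, $|\bar{\mathcal{M}}|$ finite, $M\le c\sigma$, and to bound $\kappa_0=4kM^2\bigl(\tfrac{1}{\varepsilon}\vee\tfrac{64M^2}{p_{min}B^2r_0^2}\bigr)$: with $r_0=\tilde{B}/8$, $B\ge 7\tilde{B}/8$, $\sigma/\tilde{B}\le c_2$ and $M\le c\sigma$ one has $M^4/(B^2r_0^2)\lesssim(\sigma/\tilde{B})^4\lesssim 1$, while for $k=2$ the only stationary codebook outside $\mathcal{M}$ is the degenerate one-point optimum (both centres at the mean of $P$), whose excess distortion — of order $\tilde{B}^2\gtrsim\sigma^2$ — lower-bounds $\varepsilon$. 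Thus $\kappa_0$ is bounded by a constant depending only on $k,d,\eta,c,c_-,c_2$, and the leading term is $\le C/\sqrt{n}\le C\sigma\sqrt{\log(n)/n}$ for $n$ large, using $\sigma\ge\tilde{B}/c$ (from $\tilde{B}\le 2\sup_j\|m_j\|\le M\le c\sigma$).

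The main obstacle is the identity $\mathcal{C}(\c^*)=\mathcal{C}(\m)$: this is the only place where the full symmetry of the model (equal weights, common spherical covariance, and the bisector of $m_1,m_2$ passing through the centre of the truncation ball) is indispensable — without it an irreducible deterministic bias survives that no $o(1)$ rate can absorb. A secondary, more bookkeeping obstacle is the lower bound on the separation factor $\varepsilon$, i.e.\ ruling out stationary codebooks other than $\c^*$ and the degenerate one-point codebook, which is what keeps $\kappa_0$ a genuine constant rather than a quantity that could blow up with the model parameters.
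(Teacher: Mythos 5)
Your proof is correct and follows essentially the same route as the paper's: the margin condition plus Theorems \ref{thm:fast_rates_distortion} and \ref{thm:margin_condition_gives_clusterability} give $\|\ckm-\c^*\|\lesssim M\sqrt{\log(n)/n}$ with high probability, this codebook distance is converted into a classification error via the margin condition (you route through Corollary \ref{cor:link_distortion_classification}, the paper directly through the inclusion $V_i(\c^*)\Delta V_i(\ckm)\subset\mathcal{B}(N(\c^*),\tfrac{4\sqrt{2}M}{B}\|\ckm-\c^*\|)$ --- the two are equivalent), and one concludes with the identity $\mathcal{C}(\c^*)=\mathcal{C}(\m)$, which the paper merely asserts and which your reflection argument via Lemma \ref{lem:optimal_invariance} actually justifies, under the extra symmetry you correctly flag (the bisector of $m_1,m_2$ must pass through the centre of the truncation ball). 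The one ingredient of the paper's proof that you omit is the verification that, for $n$ large, \emph{every} initialization of Lloyd's algorithm is a good one in the sense of Theorem \ref{thm:classif_error_km}: the paper obtains this from the uniform deviation bound of \cite{Biau08}, which yields $\hat{R}_{dist}(\hat{\c}_n)\geq R_{dist}(\c^*)/2>0$ with high probability, and it is needed because the bound $\|\hat{\c}_n-\ckm\|\leq 60M/(np_{min}^2)$ of Theorem \ref{thm:margin_condition_gives_clusterability} that you invoke rests on Theorem \ref{thm:classif_error_km}. Lastly, your quantitative lower bound on $\varepsilon$ (and the claim that the only non-optimal stationary codebook is the degenerate one-point one) is both doubtful and unnecessary: Proposition \ref{prop:properties_under_MC}$(iii)$ already gives $\varepsilon>0$, which suffices since the constants are allowed to depend on the distribution.
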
 
Note that in this case $\mathcal{C}(\m)$ corresponds to the Bayes classification $\mathcal{C}^*$. Thus, in the ``easy'' classification case $\frac{\sigma}{B}$ small enough, the output of the Lloyd's algorithm achieves the optimal classification error. It may be also worth remarking that this case is peculiar in the sense that $\mathcal{C}(\c^*) = \mathcal{C}(\m)$, that is the classification targeted by $k$-means is actually the optimal one. In full generality, since $\c^* \neq \m$, a bias term accounting for $R_{classif} \left ( \mathcal{C}(\c^*),\mathcal{C}(\m) \right )$ is likely to be incurred. 

\section{Conclusion}

        As emphasized by the last part of the paper, the margin condition we introduced seems a relevant assumption when $k$-means based procedures are used as a classification tool. Indeed, such an assumption in some sense postulates that there exists a natural classification that can be reached through the minimization of a least-square criterion. Besides, it also guarantees that both a true empirical distortion minimizer and the output of the Lloyd's algorithm approximate well this underlying classification.  
        
        From a technical point a view, this condition was shown to connect a risk in distortion and a risk in classification. As mentioned above, this assesses the relevance of trying to find a good classifier via minimizing a distortion, but this also entails that the distortion risk achieves a fast convergence rate of $1/n$. Though this rate seems optimal on the class of distributions satisfying a margin condition, a natural question is whether fast rates of convergence for the distortion can occur more generally. 
        
        In full generality, the answer is yes. Indeed, consider $P_0$ a two-component truncated Gaussian mixtures on $\mathbb{R}$ satisfying the requirements of Proposition \ref{prop:GM_margincondition}. Then set $P$ has a distribution over $\mathbb{R}^2$, invariant through rotations, and that has marginal distribution $P_0$ on the first coordinate. According to Corollary \ref{cor:optimal_codebook_invariance}, $P$ cannot satisfy a margin condition. However, by decomposing the distortion of codebooks into a radial and an orthogonal component, it can be shown that such a distribution gives a fast convergence rate for the expected distortion of the empirical distortion minimizer. 
        
        The immediate questions issued by Proposition \ref{prop:CN_optimality} and the above example are about the possible structure of the set of optimal codebooks: can we find distributions with infinite set of optimal codebooks that have finite isometry group? If not, through quotient-like operations can we always reach a fast convergence rate for the empirical risk minimizer? Beyond the raised interrogations, this short example allows to conclude that our margin condition cannot be necessary for the distortion of the ERM to converge fast. 
               
\section{Proofs}
\subsection{Proof of Proposition \ref{prop:GM_margincondition}}\label{subsec:proof_prop_GM_margincondition}
The proof of Proposition \ref{prop:GM_margincondition} is based on the following Lemma.
\begin{lem}{\cite[Lemma 4.2]{Levrard18}}\label{lem:Gaussiancalculus}
 Denote by $\eta = \sup_{j=1, \hdots, k}{1 - N_i}$. Then the risk $R(\m)$ may be bounded as follows.
 \begin{align}\label{eq:GM_meansrisk}
 R(\m) \leq \frac{\sigma^2 k \theta_{max} d }{(1-\eta)},
 \end{align}
 where $\theta_{max} = \max_{j=1, \hdots, k} {\theta_j}$. For any $0 < \tau <1/2$, let $\c$ be a codebook with a code point $c_i$ such that $\| c_i - m_j\| > \tau \tilde{B}$, for  every $j$ in $\{1, \hdots, k\}$. Then we have
 \begin{align}\label{eq:GM_risklowerbound}
 R(\c) > \frac{\tau^2 \tilde{B}^2 \theta_{min}}{4} \left ( 1 - \frac{2 \sigma \sqrt{d}}{\sqrt{2\pi } \tau \tilde{B}}e^{-\frac{\tau^2 \tilde{B}^2}{4 d \sigma^2}} \right )^d,
 \end{align}
 where $\theta_{min} = \min_{j=1, \hdots, k}{\theta_j}$. At last, if $\sigma^- \geq c_- \sigma$, for any $\tau'$ such that $2 \tau + \tau' < 1/2$, we have
 \begin{align}\label{eq:GM_gaussianweightfunction}
 \forall t \leq \tau' \tilde{B} \quad  p(t) \leq t \frac{2 k^2 \theta_{max} M^{d-1} S_{d-1}}{(2 \pi)^{d/2}(1- \eta) c_-^d \sigma^d} e^{- \frac{\left [ \frac{1}{2} - (2 \tau + \tau') \right ]^2 \tilde{B}^2}{2 \sigma^2}},  
 \end{align}
 where $S_{d-1}$ denotes the Lebesgue measure of the unit ball in $\mathbb{R}^{d-1}$.
 \end{lem}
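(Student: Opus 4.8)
The plan is to establish the three displayed inequalities separately, since each controls a different geometric feature of the (truncated) mixture; the computations are essentially those re-derived from the cited \cite{Levrard18}.

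For \eqref{eq:GM_meansrisk} I would bound the distortion of the means codebook by the within-component second moments. Since $\min_j\|x-m_j\|^2\le\|x-m_i\|^2$, integrating $\gamma(\m,\cdot)$ against $\tilde f$ and grouping by component gives $R(\m)\le\sum_i\theta_i\int\|x-m_i\|^2\frac{1}{(2\pi)^{d/2}N_i\sqrt{|\Sigma_i|}}e^{-\frac{1}{2}(x-m_i)^t\Sigma_i^{-1}(x-m_i)}\mathbbm{1}_{\mathcal{B}(0,M)}(x)\,dx$. Dropping the indicator only increases the nonnegative integrand, and the untruncated second moment equals $\mathrm{tr}(\Sigma_i)\le d\sigma^2$, so each integral is at most $N_i^{-1}d\sigma^2$. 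Using $N_i\ge 1-\eta$ and $\theta_i\le\theta_{max}$ then yields the bound. This step is routine.

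For \eqref{eq:GM_risklowerbound}, suppose a code point $c_i$ lies farther than $\tau\tilde B$ from every mean. Since the means are $\tilde B$-separated and $\tau<1/2$, any single point can be within $\tau\tilde B$ of at most one mean; as the $k-1$ remaining code points are close to at most $k-1$ means and $c_i$ is close to none, a pigeonhole argument produces a mean $m_{j_0}$ with \emph{no} code point within $\tau\tilde B$. By the triangle inequality every $x$ with $\|x-m_{j_0}\|\le\tau\tilde B/2$ then satisfies $\min_\ell\|x-c_\ell\|>\tau\tilde B/2$, so restricting the distortion integral to $\mathcal{B}(m_{j_0},\tau\tilde B/2)$ and keeping only component $j_0$ gives $R(\c)>\frac{\tau^2\tilde B^2}{4}\theta_{min}\,\mathbb{P}\big(\mathcal{N}(m_{j_0},\Sigma_{j_0})\in\mathcal{B}(m_{j_0},\tau\tilde B/2)\big)$. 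Passing to the eigenbasis of $\Sigma_{j_0}$, the ball contains a product of one-dimensional intervals of half-width $\tau\tilde B/(2\sqrt d)$, each coordinate Gaussian with variance $\le\sigma^2$, and a standard Mills-ratio tail bound turns the resulting product into the $(1-\cdots)^d$ factor.

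The main obstacle is \eqref{eq:GM_gaussianweightfunction}. I would first localize the optimal code points using the two preceding bounds, without circularity: since $R(\c^*)\le R(\m)$ is controlled by \eqref{eq:GM_meansrisk} while \eqref{eq:GM_risklowerbound} makes any code point farther than $\tau\tilde B$ from all means too costly, for $\sigma/\tilde B$ small every optimal code point sits within $\tau\tilde B$ of a distinct mean. Hence each piece of $N(\c^*)$ between cells $i$ and $j$ is the bisecting hyperplane of $c^*_i,c^*_j$, at distance at least $(\frac{1}{2}-2\tau)\tilde B$ from the nearest mean. I would then write $\mathcal{B}(N(\c^*),t)\subset\bigcup_{i\ne j}S_{ij}$ with $S_{ij}$ the $t$-neighborhood of each hyperplane piece, so $p(t)\le\sum_{i\ne j}P(S_{ij})$, producing the $k^2$ factor. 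Each $S_{ij}$ is a slab of thickness of order $t$ inside $\mathcal{B}(0,M)$, so its volume is at most (order $t$) times the cross-sectional $(d-1)$-volume $M^{d-1}S_{d-1}$; and for $t\le\tau'\tilde B$ every point of $S_{ij}$ stays at distance at least $[\frac{1}{2}-(2\tau+\tau')]\tilde B$ from all means, so $\tilde f$ is bounded there by $\frac{\theta_{max}}{(2\pi)^{d/2}(1-\eta)(c_-\sigma)^d}e^{-[\frac{1}{2}-(2\tau+\tau')]^2\tilde B^2/(2\sigma^2)}$, using $N_i\ge 1-\eta$ and $\sqrt{|\Sigma_i|}\ge\sigma_-^d\ge(c_-\sigma)^d$. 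Multiplying the volume and density bounds and summing over the at most $k^2$ pairs gives \eqref{eq:GM_gaussianweightfunction}. The delicate points here are tracking the bisector location through the chain $\tau\to 2\tau\to 2\tau+\tau'$ in the exponent and controlling slab volume and density simultaneously while keeping the eigenvalue dependence explicit via $\sigma$ and $c_-$; the first two bounds are comparatively direct second-moment and anti-concentration estimates.
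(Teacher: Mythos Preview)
The paper does not actually prove this lemma: it is quoted verbatim as \cite[Lemma 4.2]{Levrard18} and then used as a black box in the proof of Proposition~\ref{prop:GM_margincondition}. There is therefore no in-paper argument to compare your proposal against.

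That said, your sketch is essentially the standard route and matches how the result is applied in Section~\ref{subsec:proof_prop_GM_margincondition}. The bound \eqref{eq:GM_meansrisk} is indeed a component-wise second-moment computation; \eqref{eq:GM_risklowerbound} is the pigeonhole/anti-concentration argument you describe, and the cube-in-ball plus Mills-ratio step is the natural way to produce the $(1-\cdots)^d$ factor. For \eqref{eq:GM_gaussianweightfunction} your reading is correct that the $\tau$ appearing there is the same $\tau$ as in \eqref{eq:GM_risklowerbound}, and that the bound is meant to be used \emph{after} one has localized every $c_j^\ast$ within $\tau\tilde B$ of a distinct mean---exactly what the paper does in the proof of Proposition~\ref{prop:GM_margincondition}. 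One small point worth making explicit in your write-up: when you bound the density on the slab $S_{ij}$, the distance lower bound $[\tfrac12-(2\tau+\tau')]\tilde B$ must hold from \emph{every} mean $m_\ell$, not just $m_i$ and $m_j$; this follows because a boundary point $y\in\partial V_i(\c^*)\cap\partial V_j(\c^*)$ satisfies $\|y-c_\ell^*\|\ge\|y-c_i^*\|\ge(\tfrac12-\tau)\tilde B$ for all $\ell$, whence $\|y-m_\ell\|\ge(\tfrac12-2\tau)\tilde B$. With that, summing $\sum_\ell\theta_\ell=1$ in the density bound and $k(k-1)$ over ordered pairs gives the $k^2$ factor; the slab thickness $2t$ accounts for the prefactor $2$.
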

 \begin{proof}{Proof of Proposition \ref{prop:GM_margincondition}}
 We let $\tau = \frac{c_1 \sqrt{d}\sigma}{\tilde{B}}$, with $c_1=\sqrt{\frac{k 2^{d+2}}{(1-\eta)\theta_{min}}}$. Note that $\frac{\sigma}{\tilde{B}} \leq \frac{1}{16\sqrt{d} c_1}$ entails $\tau \leq \frac{1}{16}$. Let $\c$ be a codebook with a code point $c_i$ such that $\| c_i - m_j\| > \tau \tilde{B}$, for  every $j$ in $\{1, \hdots, k\}$. Then \eqref{eq:GM_risklowerbound} gives
 \begin{align*}
 R(\c)&> \frac{c_1^2 \sigma ^2\theta_{min} d 2^{-d}}{4} \\
      &> \frac{k \sigma^2 d}{(1-\eta)} \\
      &> R(\m),
 \end{align*}
 according to \eqref{eq:GM_meansrisk}. Thus, an optimal codebook $\c^*$ satisfies, for all $j=1, \hdots, k$, $\|c_j^*-m_j\| \leq c_1 \sqrt{d} \sigma$, up to relabeling. Under the condition $\frac{\sigma}{\tilde{B}} \leq \frac{1}{16\sqrt{d} c_1}$ and $\tau = \frac{c_1 \sqrt{d}\sigma}{\tilde{B}}$, we have, since $\tau \leq \frac{1}{16}$, for every $\c^* \in \mathcal{M}$ and $j=1, \hdots, k$,
 \begin{align*}
 \tilde{B}  \geq \frac{B}{2}, \quad \mbox{and} \quad 
 \mathcal{B} \left ( m_j,\frac{\tilde{B}}{4} \right ) \subset V_j(\c^*). 
 \end{align*}
 We thus deduce that
 \begin{align*}
 p_{min} & \geq \frac{\theta_{min}}{(2\pi)^{\frac{d}{2}}} \int_{\mathcal{B}(0,\frac{\tilde{B}}{4})} {e^{-\frac{\|u\|^2}{2}}du} \\
          & \geq \frac{\theta_{min}}{\frac{d}{2}} \left ( 1 - \frac{4 \sigma \sqrt{d}}{\sqrt{2 \pi} \tilde{B}} e^{-\frac{\tilde{B}^2}{16 d \sigma ^2}} \right )^{d} \\
          & \geq \frac{\theta_{min}}{2^d (2\pi)^{\frac{d}{2}}}. 
 \end{align*}
 Recall that we have $M \leq c \sigma $ for some constant $c>0$, and $\sigma_- \geq c_- \sigma$. If $\tilde{B} / \sigma $ additionally satisfies $\frac{\tilde{B}^2}{ \sigma^2} \geq 32 \log \left (\frac{2^{d+5} S_{d-1} k^2 c^{d+1}}{(1-\eta) \theta_{min} c_{-}^d} \right )$, choosing $\tau' = \frac{1}{8}$ in \eqref{eq:GM_gaussianweightfunction} leads to, for $t \leq \frac{\tilde{B}}{8}$,
 \begin{align*}
 p(t) & \leq  t \frac{2 k^2 M^{d-1} S_{d-1}}{(2 \pi)^{\frac{d}{2}}(1-\eta) c_-^d \sigma^d} e^{-\frac{\tilde{B}^2}{32 \sigma^2}} \\
      &  \leq t \frac{\theta_{min}M^{d-1}}{2^{d+4}c^{d+1} \sigma^{d} (2 \pi)^{\frac{d}{2}}} \\  
       & \leq t \frac{\tilde{B} \theta_{min}}{(2 \pi)^{\frac{d}{2}}2^{d+8}M^2} \leq \frac{B p_{min}}{128M^2}.
 \end{align*}
 Hence $P$ satisfies a margin condition with radius $\tilde{B}/8$. Note that according to Proposition \ref{prop:properties_under_MC}, no local minimizer of the distortion may be found in $\mathcal{B}(\c^*,r)$, for $\c^* \in \mathcal{M}$ and $r = \frac{Br_0}{4\sqrt{2M}}$. Note that $r \geq \frac{\tilde{B}^2}{64\sqrt{2}c \sigma}$ and $\|\c^* - \m \| \leq c_1 \sigma \sqrt{kd}$. Thus, if $ \frac{\sigma^2}{\tilde{B}^2} \leq \frac{1}{128\sqrt{2}c_1 c \sqrt{kd}}$, $\c^*$ is unique (up to relabeling).
 \end{proof}
\subsection{Proof of Proposition \ref{prop:CN_optimality}}\label{subsec:proof_CN_optimality} 
\begin{proof}
Let $0\leq t<\frac{1}{2}$, $\c^* \in \mathcal{M}$, and for short denote by $r_{ij}$, $V_i$, $p_i$ the quantities $\|c_i^* - c_j^*\|$, $V_i(\c^*)$ and $p_i(\c^*)$. Also denote by $u_{ij}$ the unit vector $\frac{c_j^* - c_i^*}{r_{ij}}$, $c^t_i = c_i^* + 2t(c_j^*-c_i^*)$, and by $H^t_{ij} = \left \{ x | \quad  \| x- c^t_i \| \leq \|x-c_j^*\| \right \}$. We design the quantizer $Q^t_i$ as follows: for every $\ell \neq i,j$, $Q^t(V_\ell) = c_\ell^*$, $Q^t((V_i \cup V_j) \cap H^t_{ij}) = c^t_i$, and $Q^t((V_i \cup V_j) \cap (H^t_{ij})^c)=c_j^*$.
Then we may write
\begin{align}\label{eq:localperturbation_distortion_1}
0 \leq R_{dist}(Q^t_i) - R_{dist}(\c^*) = 4 p_i r_{ij}^2 t^2 + P \left ( (\|x-c_i^t\|^2 - \|x-c_j^*\|^2) \mathbbm{1}_{V_j \cap H^t_{ij}}(x) \right ).
\end{align}
On the other hand, straightforward calculation show that 
$V_j \cap H^t_{ij} = \left \{ x | \quad 0 \leq  \left\langle x - \frac{c_i^* + c_j^*}{2}, u_{ij} \right\rangle \leq tr_{ij} \right \}$. Besides, for any $x \in V_j \cap H^t_{ij}$, denoting by $s$ the quantity $ \left\langle x - \frac{c^*_i+c^*_j}{2},u_{ij} \right\rangle$, we have
\begin{align*}
\|x-c_i^t\|^2 - \|x-c_j^*\|^2 & = 2 \left \langle (1-2t)(c_j^*-c_i^*), x - \frac{c_i^*+c_j^*}{2} - t(c_j^*-c_i^*)\right\rangle \\ 
 & = 2 \left [ r_{ij} s(1-2t) - t(1-2t)r_{ij}^2 \right ] \\
 & =2r_{ij}(1-2t)(s-tr_{ij}). 
\end{align*}
Thus \eqref{eq:localperturbation_distortion_1} may be written as
\begin{align*}
(1-2t)\int_{0}^{tr_{ij}}(tr_{ij}-s)dp_{ij}(s) \leq 2 p_i r_{ij} t^2.
\end{align*}
Integrating by parts leads to $\int_{0}^{tr_{ij}}(tr_{ij}-s)dp_{ij}(s) = \int_{0}^{tr_{ij}}p_{ij}(u)du$. Thus
\[
\int_{0}^{tr_{ij}}p_{ij}(\c^*,s)ds \leq 2 t^2 r_{ij}(\c^*) \frac{p_i(\c^*)}{1-2t}.
\]
The other inequalities follows from the same calculation, with the quantizer moving $c_i^*$ to $c_i^* - 2t(c_j^* - c_i^*)$, and the quantizer moving $c_i^*$ and $c_j^*$ to $c_i^* + t(c_j^*-c_i^*)$ and $c_j^*+t(c_j^*-c_i)^*$, leaving the other cells $V_\ell$ unchanged. 
\end{proof}
\subsection{Proof of Corollary \ref{cor:link_distortion_classification}}\label{subsec:proof_cor_link_distortion_classification}
\begin{proof}
According to \cite[Lemma 4.4]{Levrard15}, if $R_{dist}(\c) - R_{dist}(\c^*) \leq \delta$, then $\| \c - \c^*(\c) \| \leq r$, with $r= \frac{Br_0}{4 \sqrt{2}M}$. We may decompose the classification error as follows.
\begin{align*}
R_{classif} \left( \mathcal{C}(\c),\mathcal{C}(\c^*(\c)) \right ) = P \left ( \bigcup_{j \neq i} V_j(\c^*) \cap V_i(\c) \right ). 
\end{align*}
According to \cite[Lemma 4.2]{Levrard15}, 
\[
\bigcup_{j \neq i} V_j(\c^*) \cap V_i(\c) \subset \mathcal{B} \left ( N(\c^*(\c)), \frac{4 \sqrt{2} M}{B} \| \c - \c^*(\c) \| \right ).
\]
Thus, since $P$ satisfies a margin condition with radius $r_0$,
\begin{align*}
R_{classif} \left( \mathcal{C}(\c),\mathcal{C}(\c^*(\c)) \right ) & \leq \frac{4\sqrt{2}p_{min}}{128M} \| \c - \c^*(\c) \| \\
& \leq \frac{\sqrt{p_{min}}}{16M} \sqrt{R_{dist}(\c) - R_{dist}(\c^*)}, 
\end{align*}
according to Proposition \ref{prop:properties_under_MC}.
\end{proof}
\subsection{Proof of Theorem \ref{thm:fast_rates_distortion}}\label{subsec:proof_thm_fast_rates_distortion}
The proof of Theorem \ref{thm:fast_rates_distortion} relies on the techniques developed in the proof of \cite[Theorem 3.1]{Levrard15} and the following result from \cite{Biau08}.
\begin{thm}{\cite[Corollary 2.1]{Biau08}}\label{thm:slow_rates_gerard}
Assume that $P$ is $M$-bounded. Then, for any $x>0$, we have
\begin{align*}
R_{dist}(\hat{\c}_n) - R_{dist}(\c^*) \leq \frac{12kM^2 +M^2 \sqrt{2x} }{\sqrt{n}},
\end{align*}
with probability larger than $1-e^{-x}$.
\end{thm}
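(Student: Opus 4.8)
The plan is to bound the excess distortion by a uniform deviation of an empirical process, control its expectation by a dimension-free Rademacher argument, and turn that into a deviation bound via McDiarmid's inequality. First I would reduce to an empirical process: because projecting any codepoint onto $\mathcal{B}(0,M)$ only decreases $\|x-c_j\|$ for every $x$ in the (common) support of $P$ and $P_n$ --- equivalently, by the centroid condition of Proposition \ref{prop:centroid_condition} and its empirical analogue --- one may assume $\hat{\c}_n, \c^* \in \mathcal{B}(0,M)^k$. Writing
\[
R_{dist}(\hat{\c}_n) - R_{dist}(\c^*) = (P-P_n)\gamma(\hat{\c}_n,\cdot) + \bigl(\hat{R}_{dist}(\hat{\c}_n)-\hat{R}_{dist}(\c^*)\bigr) + (P_n-P)\gamma(\c^*,\cdot)
\]
and discarding the middle bracket, which is $\le 0$ by optimality of $\hat{\c}_n$ for $\hat{R}_{dist}$, gives
\[
R_{dist}(\hat{\c}_n) - R_{dist}(\c^*) \le \sup_{\c \in \mathcal{B}(0,M)^k}(P-P_n)\bigl(\gamma(\c,\cdot)-\gamma(\c^*,\cdot)\bigr) =: Z.
\]

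Next I would bound $\mathbb{E}Z$. The point is the linearisation $\gamma(\c,x) = \|x\|^2 - 2h_\c(x)$, valid on $\mathcal{B}(0,M)$, where $h_\c(x) = \max_{j \le k}\bigl(\langle x,c_j\rangle - \tfrac12\|c_j\|^2\bigr)$: the $\|x\|^2$ term cancels in $Z$, so that, after symmetrisation, $\mathbb{E}Z$ is controlled by the Rademacher complexity of the class $\{h_\c : \c \in \mathcal{B}(0,M)^k\}$. Since $\mathbb{E}\sup_{\|c\|\le M}\tfrac1n\sum_i \sigma_i\langle X_i,c\rangle = \tfrac{M}{n}\,\mathbb{E}\bigl\|\sum_i\sigma_iX_i\bigr\| \le \tfrac{M}{n}\bigl(\sum_i\mathbb{E}\|X_i\|^2\bigr)^{1/2} \le M^2/\sqrt n$, and since the complexity of a pointwise maximum of $k$ such classes (plus the quadratic offsets $\tfrac12\|c_j\|^2 \le \tfrac12 M^2$) costs at most a factor of order $k$, one obtains $\mathbb{E}Z \le 12kM^2/\sqrt n$, the numerical constant being tracked exactly as in \cite{Biau08}.

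Finally I would concentrate $Z$ around its mean. Seen as a function of $(X_1,\dots,X_n) \in \mathcal{B}(0,M)^n$, swapping one coordinate changes each $(P_n-P)\gamma(\c,\cdot)$ by at most $\tfrac1n|\gamma(\c,X_i)-\gamma(\c,X_i')| \le 4M^2/n$ uniformly in $\c$, hence changes $Z$ by $O(M^2/n)$; McDiarmid's inequality then gives a sub-Gaussian tail, and with the bounded-differences constant arising here (as recorded in \cite{Biau08}) one gets $\mathbb{P}\bigl(Z \ge \mathbb{E}Z + M^2\sqrt{2x/n}\bigr) \le e^{-x}$. Combined with $\mathbb{E}Z \le 12kM^2/\sqrt n$, this yields the announced bound.

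The hard part is the expectation bound, and specifically getting a \emph{dimension-free} constant with only linear dependence on $k$: a crude $\varepsilon$-net of $\mathcal{B}(0,M)^k \subset \mathbb{R}^{kd}$ would introduce a spurious $\sqrt d$ factor, so one must instead exploit the affine structure of the maps $x \mapsto \langle x,c_j\rangle - \tfrac12\|c_j\|^2$ and bound the Rademacher complexity of their pointwise maximum directly --- this is precisely where the factor $12k$ originates, and it is the one step that genuinely uses the Hilbert-space geometry rather than a generic covering argument.
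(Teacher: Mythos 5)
The paper gives no proof of this statement---it is imported verbatim from \cite[Corollary 2.1]{Biau08}---so the only comparison available is with that source, whose argument your outline reproduces faithfully: one-sided reduction to an empirical process via optimality of $\hat{\c}_n$, the linearisation $\gamma(\c,x)=\|x\|^2-2\max_j\left(\langle x,c_j\rangle-\tfrac12\|c_j\|^2\right)$ yielding a dimension-free Rademacher bound linear in $k$ (the genuinely Hilbertian step, as you correctly identify), then bounded differences. The one loose end is the constant in the concentration step: with the increment $4M^2/n$ you actually compute, McDiarmid gives a deviation term $2M^2\sqrt{2x/n}$ rather than the stated $M^2\sqrt{2x/n}$, a factor of two you (reasonably, but without resolving it) defer to the reference.
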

We are now in position to prove Theorem \ref{thm:fast_rates_distortion}.
\begin{proof}{Proof of Theorem \ref{thm:fast_rates_distortion}}
Assume that $P$ satisfies a margin condition with radius $r_0$, and denote by $r = \frac{Br_0}{4\sqrt{2}M}$, $\delta = \frac{p_{min}}{2}r^2 \wedge \varepsilon$, where $\varepsilon$ denotes the separation factor in Definition \ref{def:separation_factor}. For short denote, for any codebook $\c \in (\mathbb{R^d})^k$, by $\ell(\c,\c^*) = R_{dist}(\c) - R_{dist}(\c^*)$. According to \cite[Lemma 4.4]{Levrard15}, if $\| \c - \c^*(\c)\| \geq r$, then $\ell(\c,\c^*) \geq \frac{p_{min}}{2}r^2 \wedge \varepsilon$. Hence, if $\ell(\c,\c^*) < \delta$, $\|\c- \c^*(\c)\| <r$. 

Using Theorem \ref{thm:slow_rates_gerard}, we may write
\begin{align}\label{eq:majoration_proba_outofbasin}
\mathbb{P} \left ( \ell(\hat{\c}_n,\c^*) > \delta \right ) \leq e^{- \frac{n}{2M^4}\left ( (\delta - \frac{12kM^2}{\sqrt{n}}) \right )^2}.
\end{align} 
Now, for any $x>0$ and constant $C$ we have
\begin{align*}
\mathbb{P}&\left [ \left ( \ell(\hat{\c}_n,\c^*) > C \frac{2}{p_{min}} \frac{\left ( k + \log \left ( \left | \bar{\mathcal{M}} \right | \right ) \right )M^2}{n} + \frac{288 M^2}{p_{min}n}x + \frac{64M^2}{n}x \right )\cap (\ell(\hat{\c}_n,\c^*) < \delta) \right ] \\
    & \leq \mathbb{P}\left [ \left ( \ell(\hat{\c}_n,\c^*) > C \frac{2}{p_{min}} \frac{\left ( k + \log \left ( \left | \bar{\mathcal{M}} \right | \right ) \right )M^2}{n} + \frac{288 M^2}{p_{min}n}x + \frac{64M^2}{n}x \right )\cap \left (\hat{\c}_n \in \mathcal{B}(\mathcal{M},r) \right ) \right ].
\end{align*}
Proceeding as in the proof of \cite[Theorem 3.1]{Levrard15} entails, for every $x>0$,
\begin{align}\label{eq:deviation_proba_in_basin}
\mathbb{P}\left [ \left ( \ell(\hat{\c}_n,\c^*) > C \frac{2}{p_{min}} \frac{\left ( k + \log \left ( \left | \bar{\mathcal{M}} \right | \right ) \right )M^2}{n} + \frac{288 M^2}{p_{min}n}x + \frac{64M^2}{n}x \right )\cap \left (\hat{\c}_n \in \mathcal{B}(\mathcal{M},r) \right ) \right ] \leq e^{-x},
\end{align}
for some constant $C>0$. Note that \eqref{eq:majoration_proba_outofbasin} and \eqref{eq:deviation_proba_in_basin} are enough to give a deviation bound in probability. For the bound in expectation, 
set $\beta = \frac{2 C\left ( k + \log \left ( \left | \bar{\mathcal{M}} \right | \right ) \right )M^2 }{np_{min}}$.
On one hand, Theorem \ref{thm:slow_rates_gerard} and \eqref{eq:majoration_proba_outofbasin} yield that
         \begin{align*}
         \mathbb{E}(\ell(\hat{\c}_n,\c^*) \mathbbm{1}_{\ell(\hat{\c}_n,\c^*)>\delta}) & \leq  \int_{\delta}^{\infty} \mathbb{P}(\ell(\hat{\c}_n,\c^*)>u)du \\
         & \leq \begin{multlined}[t] \left [ \frac{12kM^2}{\sqrt{n}} - \delta  + \int_{\frac{12kM^2}{\sqrt{n}}}^{\infty} \mathbb{P}(\ell(\hat{\c}_n,\c^*)>u) du \right ] \mathbbm{1}_{\delta < \frac{12kM^2}{\sqrt{n}}} \\
         + \left [\int_{0}^{\infty} \mathbb{P}(\ell(\hat{\c}_n,\c^*) - \frac{12kM^2}{\sqrt{n}} \geq (\delta - \frac{12kM^2}{\sqrt{n}}) + u)du \right ] \mathbbm{1}_{\delta \geq \frac{12kM^2}{\sqrt{*n}}}
         \end{multlined}\\
         & \leq \left [ \frac{20 k M^2}{\sqrt{n}} - \delta \right ]\mathbbm{1}_{\delta < \frac{12kM^2}{\sqrt{n}}} + \left [\int_{0}^{\infty} e^{- \frac{n}{2M^4}\left ( (\delta - \frac{12kM^2}{\sqrt{n}}) + u \right )^2} du \right ]\mathbbm{1}_{\delta \geq \frac{12kM^2}{\sqrt{*n}}}   \\
         & \leq  \left [ \frac{20 k M^2}{\sqrt{n}} - \delta \right ] \mathbbm{1}_{\delta < \frac{12kM^2}{\sqrt{n}}} + \left [ e^{- \frac{n}{2M^4}\left ( (\delta - \frac{12kM^2}{\sqrt{n}}) \right )^2} \frac{M^2}{\sqrt{n}} \right ] \mathbbm{1}_{\delta \geq \frac{12kM^2}{\sqrt{n}}}, 
         \end{align*}
         where we used $\sqrt{\pi} \leq 2$ and $(a+b)^2 \geq a^2 + b^2$ whenever $a$,$b \geq0$.
On the other hand, \eqref{eq:deviation_proba_in_basin} entails
         \begin{align*}
         \mathbb{E}(\ell(\hat{\c}_n,\c^*) \mathbbm{1}_{\ell(\hat{\c}_n,\c^*)\leq \delta}) & \leq (\beta - \delta) \mathbbm{1}_{\delta < \beta} + \left [ \beta + \int_{\beta}^{\infty}\mathbb{P}((\ell(\hat{\c}_n,\c^*) \geq u) \cap \left (\hat{\c}_n \in \mathcal{B}(\mathcal{M},r) \right )) du \right ] \mathbbm{1}_{\delta \leq \beta} \\
        & \leq \beta + \frac{252 M^2}{n p_{min}}, 
         \end{align*}
         where we used $p_{min} \leq 1$. Collecting the pieces gives the result of Theorem \ref{thm:fast_rates_distortion}.
\end{proof}
\subsection{Proof of Proposition \ref{prop:minimax_new}}\label{subsec:minimax_proofs}
\begin{proof}
Assume that $dim(\mathcal{H})=d$, and let $z_1, \hdots, z_k$ be in $\mathcal{B}(0,M-\Delta/8)$ such that $\|z_i - z_j\| \geq \Delta$, and $\Delta \leq 2M$. Then slightly anticipating we may choose
\[
\Delta \leq \frac{3M}{4k^{1/d}}.
\]
Let $\rho = \Delta/8$, and for $\sigma \in \left \{-1,1 \right \}^k$ and $\delta \leq 1$ denote by $P_{\sigma}$ the following distribution. For any $A \subset \mathcal{H}$, and $i = 1, \hdots, k$,
\[
P_{\sigma}(A \cap \mathcal{B}(z_i,\rho)) = \frac{1}{2 \rho k} \left [ (1+ \sigma_i \delta) \lambda_1 (e_1^*(A-z_i) \cap [0,\rho] + (1-\sigma_i \delta) \lambda_1(e_1^*(A-z_i) \cap [-\rho,0] \right ],
\]
where $e_1^*$ denotes the projection onto the first coordinate and $\lambda_1$ denote the $1$-dimensional Lebesgue measure. Note that for every $i$, $P_{\sigma}(\mathcal{B}(z_i,\rho))=1/k$. We let $\c_\sigma$ denote the codebook whose codepoints are $c_{\sigma,i} = z_i + \sigma_i \delta/2$. For such distributions $P_\sigma$'s, it is shown in Section \ref{subsec:proof_minimaxnew_intermediateresults} that
               \[
               \left \{
               \begin{array}{@{}ccc}
               p_{min} &=& \frac{1}{k}, \\
               B &\geq& \frac{3\Delta}{4}, \\
               r_0 & \geq & \frac{\Delta}{4}, \\
               \varepsilon & \geq &  \frac{ \Delta^2}{96k}.
               \end{array}
               \right .
               \]
               Half of the proof of Proposition \ref{prop:minimax_new} is based on the following Lemma. For simplicity, we write $R(\hat{\c},P_{\sigma})$ for the distortion of the codebook $\hat{\c}$ when the distribution is $P_\sigma$.
\begin{lem}\label{lem:minimax_new_risks}
     For every $\sigma$, $\sigma'$ in $\{-1,+1\}^k$, 
     \[
     R(\c_{\sigma'},P_\sigma) - R(\c_{\sigma},P_{\sigma}) = \frac{2\delta^2 \rho^2}{k} H(\sigma,\sigma') = \frac{2}{k} \|\c_{\sigma} - \c_{\sigma}'\|^2,
     \]
     where $H(\sigma,\sigma') = \sum_{i=1}^{k}|\sigma_i - \sigma'_i|/2$. Moreover, for every codebook $\hat{\c}$ there exist $\hat{\sigma}$ such that, for all $\sigma$, 
     \[
     R(\hat{\c},P_\sigma) - R(\c_\sigma,P_\sigma) \geq \frac{1}{4k} \| \c_{\hat{\sigma}} - \c_\sigma \|^2.
     \]
\end{lem}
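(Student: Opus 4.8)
The plan is to prove both statements by reducing the multivariate distortion to a sum of independent one–dimensional problems, one per ball $\mathcal B(z_i,\rho)$, exploiting that the $z_i$ are far apart compared with $\rho$ (recall $\Delta=8\rho$) while every codepoint that can possibly matter sits within $\delta\rho/2\le\rho/2$ of some $z_i$. Throughout I will use the elementary fact that for a codepoint at $z_i+ae_1$ and $x=z_i+te_1$ with $t\in[-\rho,\rho]$ one has $\|x-(z_i+ae_1)\|^2=(t-a)^2$, and that the restriction of $P_\sigma$ to $\mathcal B(z_i,\rho)$ is, in the variable $t$, the measure with density $\tfrac{1+\sigma_i\delta}{2\rho k}$ on $[0,\rho]$ and $\tfrac{1-\sigma_i\delta}{2\rho k}$ on $[-\rho,0]$, so that integrating $(t-a)^2$ against it is a one–line polynomial computation (the mean of $t$ being $\tfrac{\sigma_i\delta\rho}{2}$, which is exactly the offset defining $c_{\sigma,i}=z_i+\tfrac{\sigma_i\delta\rho}{2}e_1$).

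\emph{The identity.} For a codebook of the form $\c_{\sigma'}$, every $x\in\mathcal B(z_i,\rho)$ has $c_{\sigma',i}$ as its unique nearest codepoint, since $\|x-c_{\sigma',i}\|\le\tfrac32\rho$ while every other codepoint is at distance $\ge\Delta-\tfrac32\rho$ from $x$; hence $R(\c_{\sigma'},P_\sigma)=\sum_i\int_{\mathcal B(z_i,\rho)}\|x-c_{\sigma',i}\|^2\,dP_\sigma$. Carrying out the one–dimensional integral with $a=\pm\delta\rho/2$, the term $a^2=\delta^2\rho^2/4$ does not depend on $\sigma'_i$, and the only surviving $\sigma'$–dependence is a cross term proportional to $\sigma_i\sigma'_i\,\delta^2\rho^2$. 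Subtracting $R(\c_\sigma,P_\sigma)$, all $\sigma'$–free contributions cancel and there remains a quantity proportional to $\sum_i(1-\sigma_i\sigma'_i)=2H(\sigma,\sigma')$; identifying the constant and comparing with $\|\c_\sigma-\c_{\sigma'}\|^2=\delta^2\rho^2\,H(\sigma,\sigma')$ yields both equalities. In particular $\c_\sigma$ minimises $R(\cdot,P_\sigma)$ among the $\c_{\sigma'}$.

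\emph{The lower bound.} Fix a codebook $\hat\c$ and call ball $i$ \emph{well served} if $\hat\c$ has a codepoint $\hat c^{(i)}$ with $\|\hat c^{(i)}-z_i\|\le\rho$ while every other codepoint is at distance $>3\rho$ from $z_i$; then $\hat c^{(i)}$ is the nearest codepoint for the whole of $\mathcal B(z_i,\rho)$, so the contribution of ball $i$ to $R(\hat\c,P_\sigma)$ is $\int_{\mathcal B(z_i,\rho)}\|x-\hat c^{(i)}\|^2\,dP_\sigma$, and the same one–dimensional computation as above shows that the contribution of ball $i$ to $R(\hat\c,P_\sigma)-R(\c_\sigma,P_\sigma)$ equals $\tfrac1k\|\hat c^{(i)}-c_{\sigma,i}\|^2$. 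Choosing $\hat\sigma_i$ to be the sign of the first coordinate of $\hat c^{(i)}-z_i$, this per–cell excess is $\ge0$ when $\hat\sigma_i=\sigma_i$, while if $\hat\sigma_i\ne\sigma_i$ the first coordinates of $\hat c^{(i)}-z_i$ and of $c_{\sigma,i}-z_i=\tfrac{\sigma_i\delta\rho}{2}e_1$ have opposite signs, whence $\|\hat c^{(i)}-c_{\sigma,i}\|\ge\tfrac{\delta\rho}{2}=\tfrac12\|c_{\hat\sigma,i}-c_{\sigma,i}\|$; in both cases the cell excess is at least $\tfrac1{4k}\|c_{\hat\sigma,i}-c_{\sigma,i}\|^2$.

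It remains to treat the cells that are not well served, which is where the real difficulty lies. If ball $i$ has no codepoint within $\rho$ of $z_i$, a direct estimate — its mass is seen by codepoints at distance $\ge\rho$ from its center, while $c_{\sigma,i}$ is nearly optimal there — gives a cell excess of at least $\tfrac{\rho^2}{4k}$, which dominates $\tfrac1{4k}\|c_{\hat\sigma,i}-c_{\sigma,i}\|^2\le\tfrac{\delta^2\rho^2}{4k}$ for any choice of $\hat\sigma_i$ (here $\delta\le1$ is used). The genuinely delicate case is a ball carrying two or more codepoints within $3\rho$ of its center: such a cell may have \emph{negative} excess, but since a codepoint within $3\rho$ of some $z_i$ is more than $5\rho$ from every other $z_j$ (again because $\Delta=8\rho$), each such ``extra'' codepoint is stolen from another ball, forcing at least as many balls to carry no codepoint within $3\rho$ of their center; each of those has cell excess at least $\tfrac{11\rho^2}{3k}$, a surplus more than enough to absorb both the bounded deficit $\ge-\tfrac{\rho^2}{3k}$ of a crowded cell and its own term $\tfrac1{4k}\|c_{\hat\sigma,i}-c_{\sigma,i}\|^2$. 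Setting up this compensation bijectively — the total number of extra near–codepoints is at most the number of starved balls — and summing over all cells gives $R(\hat\c,P_\sigma)-R(\c_\sigma,P_\sigma)\ge\tfrac1{4k}\sum_i\|c_{\hat\sigma,i}-c_{\sigma,i}\|^2=\tfrac1{4k}\|\c_{\hat\sigma}-\c_\sigma\|^2$. I expect the bookkeeping of this global accounting, rather than any single inequality, to be the part that needs the most care.
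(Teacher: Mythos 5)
Your proof of the identity is essentially the paper's: the restriction of $P_\sigma$ to the interval $I_i=[z_i-\rho e_1,z_i+\rho e_1]$ has centroid $c_{\sigma,i}$, the intervals are deep inside the Voronoi cells of $\c_{\sigma'}$, and the centroid (bias--variance) decomposition $R(\c_{\sigma'},P_\sigma)=R(\c_\sigma,P_\sigma)+\sum_i P_\sigma(I_i)\|c_{\sigma',i}-c_{\sigma,i}\|^2$ gives the result. (Your constant $\tfrac1k\|\c_\sigma-\c_{\sigma'}\|^2$ actually agrees with the paper's own displayed decomposition rather than with the factor $\tfrac2k$ in the lemma statement; that discrepancy is internal to the paper and not your problem.) The global counting argument for the lower bound --- each codepoint can be ``near'' at most one $z_i$ because $\Delta=8\rho$, so crowded intervals force at least as many starved intervals, whose large surplus absorbs the crowded intervals' bounded deficit --- is also the same idea as the paper's comparison $|\{i:n_i^{out}\ge1\}|\ge|\{i:n_i^{out}=0,\,n_i^{in}\ge2\}|$.

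The gap is in your treatment of the intermediate cells. Your claim that a ball with no codepoint within $\rho$ of $z_i$ has excess at least $\tfrac{\rho^2}{4k}$ is false: put two codepoints at $z_i\pm 1.05\rho\, e_1$ and all others far away. Then each half of $I_i$ is served by the codepoint on its side, the distortion on $I_i$ is about $\tfrac{1.16\,\rho^2}{3k}$, and the excess is of order $10^{-1}\rho^2/k$, well below $\tfrac{\rho^2}{4k}$; the ``single far server'' bias--variance argument you implicitly invoke does not apply because the interval is split between two servers. (The final inequality survives in this example only because such a ball is simultaneously ``crowded'' and triggers the starved-ball compensation --- but you present the two cases as disjoint, each independently delivering its per-cell bound, and they are not disjoint.) A second instance of the same problem: a unique codepoint at distance $d_1\in(\rho,3\rho]$ from $z_i$ with the rest beyond $3\rho$ need not serve all of $I_i$ (a codepoint at $z_i+3\rho e_1$ loses the left end of $I_i$ to one at $z_i-3.5\rho e_1$), so neither the exact identity nor the ``all servers are at distance $\ge 2\rho$'' bound applies to that cell. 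The mismatch between your two radii ($\rho$ for ``well served'', $3\rho$ for the stealing count) is exactly where these uncovered configurations live. The paper sidesteps all of this by never bounding per-cell excesses of $\hat\c$ directly: it builds a modified quantizer $\tilde Q$ with exactly one codepoint per interval (projecting a unique in-cell server onto $I_i$, collapsing to $z_i$ whenever $I_i$ is starved or crowded), shows $R(Q,P_\sigma)\ge R(\tilde Q,P_\sigma)$ by the counting argument, applies the exact identity to $\tilde Q$, and finishes with the triangle inequality $\|\tilde\c-\c_\sigma\|\ge\tfrac12\|\c_{\hat\sigma}-\c_\sigma\|$ where $\c_{\hat\sigma}$ is the sign vector closest to $\tilde\c$ --- which is also where the factor $\tfrac14$ cleanly comes from. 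You would need either to adopt that device or to redo your case analysis with a single radius and a genuinely exhaustive partition.
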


Lemma \ref{lem:minimax_new_risks}, whose proof is to be found in Section \ref{subsec:proof_minimaxnew_intermediateresults}, ensures that our distortion estimation problem boils down to a $\sigma$ estimation problem. Namely, we may deduce that 
\begin{align*}
\inf_{\hat{Q}} \sup_\sigma \mathbb{E} (R(\hat{Q},P_\sigma)-R(\c_\sigma,P_\sigma)) & \geq  \frac{\delta^2 \rho^2}{4k} \inf_{\hat{\sigma}} \sup_{\sigma} H(\hat{\sigma},\sigma)).
\end{align*}
The last part of the proof derives from the following.
\begin{lem}\label{lem:minimax_Hellinger}
If $k \geq n$ and $\delta \leq \sqrt{k}/2\sqrt{n}$, then, for every $\sigma$ and $\sigma'$ such that $H(\sigma,\sigma')=1$, 
\[
h^2(P_\sigma^{\otimes n}, P_{\sigma'}^{\otimes n}) \leq 1/4,
\]
where $h^2$ denotes the Hellinger distance.
\end{lem}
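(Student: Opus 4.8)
The plan is to compute the Hellinger affinity between $P_\sigma$ and $P_{\sigma'}$ exactly and then tensorize. Recall that for probability measures $P,Q$ with densities $p,q$ with respect to a common dominating measure $\mu$, the Hellinger affinity $\mathrm{aff}(P,Q) = \int \sqrt{pq}\, d\mu$ satisfies $h^2(P,Q) = 1 - \mathrm{aff}(P,Q)$ and $\mathrm{aff}(P^{\otimes n}, Q^{\otimes n}) = \mathrm{aff}(P,Q)^n$; hence it is enough to obtain a sharp lower bound on $\mathrm{aff}(P_\sigma,P_{\sigma'})$.

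First I would exploit that $H(\sigma,\sigma') = 1$ forces a unique index $i_0$ with $\sigma_{i_0} \neq \sigma'_{i_0}$, so that $P_\sigma$ and $P_{\sigma'}$ agree on $\mathcal{B}(z_i,\rho)$ for every $i \neq i_0$; on the union of these balls $\sqrt{p_\sigma p_{\sigma'}}$ is just the common density, which integrates to $\sum_{i \neq i_0} P_\sigma(\mathcal{B}(z_i,\rho)) = 1 - 1/k$. On $\mathcal{B}(z_{i_0},\rho)$ one of the two measures has density $\frac{1+\delta}{2\rho k}$ on the half where $e_1^*(x - z_{i_0}) \ge 0$ and $\frac{1-\delta}{2\rho k}$ on the half where $e_1^*(x - z_{i_0}) \le 0$, while the other has the two values interchanged; thus $\sqrt{p_\sigma p_{\sigma'}} = \frac{\sqrt{1-\delta^2}}{2\rho k}$ on each half, and integrating over the segment of total length $2\rho$ yields $\sqrt{1-\delta^2}/k$. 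Altogether
\[
\mathrm{aff}(P_\sigma, P_{\sigma'}) = 1 - \frac{1}{k} + \frac{\sqrt{1-\delta^2}}{k} = 1 - \frac{1 - \sqrt{1-\delta^2}}{k}.
\]

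It then remains to plug in elementary inequalities. Writing $1 - \sqrt{1-\delta^2} = \frac{\delta^2}{1 + \sqrt{1-\delta^2}} \le \delta^2$, I get $\mathrm{aff}(P_\sigma,P_{\sigma'}) \ge 1 - \delta^2/k$, and Bernoulli's inequality gives $\mathrm{aff}(P_\sigma^{\otimes n}, P_{\sigma'}^{\otimes n}) = \mathrm{aff}(P_\sigma,P_{\sigma'})^n \ge (1 - \delta^2/k)^n \ge 1 - n\delta^2/k$. The assumptions $k \ge n$ and $\delta \le \sqrt{k}/(2\sqrt{n})$ (together with the standing bound $\delta \le 1$ from the construction) give $\delta^2 \le k/(4n)$, hence $n\delta^2/k \le 1/4$ and $\mathrm{aff}(P_\sigma^{\otimes n}, P_{\sigma'}^{\otimes n}) \ge 3/4$, so that $h^2(P_\sigma^{\otimes n}, P_{\sigma'}^{\otimes n}) = 1 - \mathrm{aff}(P_\sigma^{\otimes n}, P_{\sigma'}^{\otimes n}) \le 1/4$. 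I do not expect any real obstacle: the only points needing care are using the same normalization of the Hellinger distance as in the subsequent application of Assouad's lemma, and checking that $P_\sigma$ and $P_{\sigma'}$ are mutually absolutely continuous on $\mathcal{B}(z_{i_0},\rho)$ — which holds because both densities there are bounded below by $\frac{1-\delta}{2\rho k} > 0$ — so that the affinity computation and the tensorization identity are legitimate.
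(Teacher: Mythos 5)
Your proof is correct and is precisely the computation this lemma calls for: the exact Hellinger affinity $1-\bigl(1-\sqrt{1-\delta^2}\bigr)/k$ between $P_\sigma$ and $P_{\sigma'}$ at Hamming distance one, the bound $1-\sqrt{1-\delta^2}\leq\delta^2$, tensorization of the affinity, and Bernoulli's inequality, with the convention $h^2=1-\mathrm{aff}$ (the one under which the stated constant $1/4$ is consistent with $\delta\leq\sqrt{k}/(2\sqrt{n})$). The paper in fact states this lemma without supplying a proof in the intermediate-results section, so your argument fills that omission rather than diverging from an existing one; your side remarks (the typo-looking hypothesis $k\geq n$ versus the $n\geq k$ of the surrounding proposition, handled via the standing bound $\delta\leq 1$) are also apt.
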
 
Thus, recalling that $\Delta = {3M}/(4k^{1/d})$ and $\rho=\Delta/8$, if we choose $\delta = \frac{\sqrt{k}}{2\sqrt{n}}$, a direct application of \cite[Theorem 2.12]{Tsybakov08} yields
\[
\inf_{\hat{Q}} \sup_\sigma \mathbb{E} (R(\hat{Q},P_\sigma)-R(\c_\sigma,P_\sigma)) \geq \frac{9}{2^{16}}M^2 \frac{k^{1-\frac{2}{d}}}{n}.
\]
\end{proof}
\subsection{Intermediate results for Section \ref{subsec:minimax_proofs}}\label{subsec:proof_minimaxnew_intermediateresults}
First we prove Lemma \ref{lem:minimax_new_risks}.
\begin{proof}{Proof of Lemma \ref{lem:minimax_new_risks}}
We let $I_i$ denote the $1$ dimensional interval $[z_i - \rho e_1, z_i + \rho e_1]$, and $V_i$ the Voronoi cell associated with $z_i$. At last, for a quantizer $Q$ we denote by $R_i(Q,P_\sigma)$ the contribution of $I_i$ to the distortion, namely $R_i(Q,P_\sigma) = P_\sigma \|x - Q(x)\|^2 \mathbbm{1}_{V_i}(x) = P_\sigma \|x - Q(x)\|^2 \mathbbm{1}_{I_i}(x)$. Since ${\Delta}/2 - 3 \rho >0$, $I_i \subset V_i(\c_{\sigma})$, for every $i$ and $\sigma$.  According to  the centroid condition (Proposition \ref{prop:centroid_condition}), if $|Q(I_i)|=1$, that is only one codepoint is associated with $I_i$, then 
\begin{align}\label{eq:minimax_local_convexity}
R(Q,P_{\sigma}) & = R(\c_\sigma,P_\sigma)  + \sum_{i=1}^{k} P_{\sigma}(I_i) \| Q(I_i) - c_{\sigma,i}\|^2,
\end{align}
hence the first part of Lemma \ref{lem:minimax_new_risks}, with $Q$ associated to $\c_{\sigma'}$. 

Now let $\c$ be a codebook, and denote by $Q$ the associated quantizer. Denote by $n_i = |Q(I_i)|$, $n_i^{in} = |Q(I_i) \cap V_i |$ and $n_i^{out} = |Q(I_i) \cap V_i^c$. If $n_i^{out} \geq 1$, then there exists $x_0 \in I_i$ such that $\|Q(x_0) - x_0\| \geq \Delta/2 - \rho$. Then, for any $x \in I_i$ it holds $\|Q(x) - x\| \geq \| Q(x) - x_0 \| - 2\rho \geq \Delta/2 - 3 \rho$. We deduce that for such an $i$, and every $\sigma$, 
\[
R_i(Q,\sigma) \geq \frac{1}{k}\left \| \frac{\Delta}{2} - 3 \rho ^2 \right \| = \frac{\rho^2}{k}.
\]
The second base inequality is that, for every $Q$ such that $Q(I_i) = z_i$, and every $\sigma$,
\[
R_i(Q,\sigma) = \frac{\rho^2}{3k}.
\]
We are now in position to build a new quantizer $\tilde{Q}$ that outperforms $Q$. 
\begin{itemize}
\item If $n_i^{in}=1$ and $n_i^{out}=0$, then $\tilde{Q}(I_i) = \pi_{I_i}(Q(I_i))$, where $\pi_{I_i}$ denote the projection onto $I_i$.
\item If $n_i^{out}\geq1$, then $\tilde{Q}(I_i)=z_i$.
\item If $n_i^{in} \geq 2$ and $n_i^{out}=0$, then $\tilde{Q}(I_i)=z_i$.
\end{itemize} 
Such a procedure defines a $k$-point quantizer $\tilde{Q}$ that sends every $I_i$ onto $I_i$. Moreover, we may write, for every $\sigma$
\begin{align*}
R(Q,P_{\sigma}) & =\sum_{n_i^{in}=1, n_i^{out}=0} R_i(Q,P_\sigma) + \sum_{n_i^{out} \geq 1} R_i(Q,P_\sigma) + \sum_{n_i^{out}=0, n_i^{in} \geq 2} R_i(Q,P_\sigma) \\
               & \geq \sum_i R_i(\tilde{Q},P_\sigma) + \left | \{i|n_i^{out}\geq1 \} \right | \frac{2\rho^2}{3k} - \left | \{ i | n_i^{out}=0,n_i^{in} \geq 2 \} \right | \frac{\rho^2}{3k}.    
\end{align*}
Since $\left | \{i|n_i^{out}\geq 1 \} \right | \geq \left | \{ i | n_i^{out}=0,n_i^{in} \geq 2 \} \right |$, we have $R(Q,P_\sigma) \geq R(\tilde{Q},P_\sigma)$, for every $\sigma$. Note that such a quantizer $\tilde{Q}$ is indeed a nearest-neighbor quantizer, with images $\tilde{c}_i \in I_i$. For such a quantizer $\tilde{\c}$, \eqref{eq:minimax_local_convexity} yields, for every $\sigma$,
\[
R(\tilde{\c},P_\sigma) - R(\c_\sigma,P_\sigma) = \frac{\| \tilde{\c} - \c_\sigma \|^2}{k}. 
\]
Now, if $\c_{\hat{\sigma}}$ denotes $\arg\min_{\c_\sigma}\|\c_\sigma - \tilde{\c}\|$, then, for every $\sigma$ we have
\[
\|\tilde{\c}-\c_\sigma\| \geq \frac{\|\c_{\hat{\sigma}} - \c_\sigma\|}{2}.
\]
Thus, recalling our initial codebook $\c$, for every $\sigma$, $R(\c,P_\sigma) - R(\c_{{\sigma}},P_\sigma) \geq \frac{1}{4k}\| \c_{\hat{\sigma}} - \c_\sigma \|^2$. 
\end{proof}
\subsection{Proof of Proposition \ref{prop:connections_B_epsilon_r0}}\label{subsec:proof_prop_connections_B_epsilon_r0}
Let $\c^* \in \mathcal{M}$ and $i \neq j$ such that $\|c_i^* - c_j^*\|=B$. We denote by $Q_{i,j}$ the $(k-1)$- points quantizer that maps $V_{\ell}(\c^*)$ onto $c_\ell^*$, for $\ell \neq i,j$, and $V_i(\c^*) \cup V_j(\c^*)$ onto $\frac{c_i^* + c_j^*}{2}$. Then $R_{dist}(Q_{i,j}) - R_{dist}(\c^*) = (p_i(\c^*) + p_j(\c^*))\frac{B^2}{4} \leq \frac{B^2}{4}$. Thus, denoting by $\c^{*,(k-1)}$ an optimal $(k-1)$-points quantizer, $R_{dist}(\c^{*,(k-1)}) - R_{dist}(\c^*) \leq \frac{B^2}{4}$. Since $\varepsilon \leq R_{dist}(\c^{*,(k-1)}) - R_{dist}(\c^*)$, the first part of Proposition \ref{prop:connections_B_epsilon_r0} follows.

For the same optimal codebook $\c^*$, we denote for short by $p(t)$ the quantity
\[
p(t) = P \left ( \left \{ x | \quad 0 \leq \left\langle x - \frac{c_i + c_j}{2}, \frac{c_i-c_j}{r_{i,j}(\c^*)} \right\rangle \leq t \right \} \cap V_i(\c) \right ),
\]
and by $p_i = p_i(\c^*)$. According to Proposition \ref{prop:centroid_condition}, we have
\begin{align}\label{eq:centroid_projected_B}
p_i \frac{B}{2} &= P \left ( \left\langle x - \frac{c_i^*+c_j^*}{2}, \frac{c_i^*-c_j^*}{r_{i,j}} \right\rangle \mathbbm{1}_{V_i(\c^*)} (x) \right ) \notag \\
                & = \int_{0}^{2M} t dp(t).
\end{align}
Assume that $r_0 > B$. Then 
\begin{align*}
p(B) & \leq \frac{B p_{min}}{128 M^2} B \\
     & \leq \frac{p_{min}}{32}.
\end{align*}
On the other hand, \eqref{eq:centroid_projected_B} also yields that
\begin{align*}
p_i \frac{B}{2} & \geq  \int_{B}^{2M} tdp(t) \\
                & \geq B \left ( p_i - p(B) \right ) \\
                & \geq p_i B \frac{31}{32}, 
\end{align*}
hence the contradiction.
\subsection{Proof of Proposition \ref{prop:connection_clusterability_empiricalmargin}}\label{sec:Proof of Proposition prop:connection_clusterability_empiricalmargin}
  The proof of Proposition \ref{prop:connection_clusterability_empiricalmargin} is based on the following Lemma, that connects the clusterability assumption introduced in Definition \ref{def:clusterability} to another clusterability definition introduced in \cite{Kumar10}.
  \begin{lem}{\cite[Lemma 10]{Tang16b}}\label{lem:connection_clusterability1_clusterabilityKK}
  Assume that there exist $d_{rs}$'s, $r \neq s$, such that, for any $r \neq s$ and $x \in V_s(\hat{\c}_n)$,
  \[
  P_n \left ( \left \{ x | \quad \|x_{rs} - \hat{c}_r\| \leq \|x_{rs} - \hat{c}_s \| + d_{rs} \right \} \right ) < \hat{p}_{min},
  \]
  where $x_{rs}$ denotes the projection of $x$ onto the line joining $\hat{c}_r$ and $\hat{c}_s$. Then, for all $r \neq s$,
  \[
  \|\hat{c}_r - \hat{c}_s\| \geq d_{rs}.
  \] 
  \end{lem}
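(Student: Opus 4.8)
The plan is to argue by contradiction, converting the assumed upper bound on the slab mass into a lower bound that becomes impossible once the two centers are too close. Fix $r \neq s$ and suppose, contrary to the claim, that $\|\hat{c}_r - \hat{c}_s\| < d_{rs}$; write $\rho = \|\hat{c}_r - \hat{c}_s\|$ and let $u = (\hat{c}_s - \hat{c}_r)/\rho$ be the unit vector along the line joining the two centers. For $x \in \mathcal{H}$ I would introduce the scalar coordinate $t(x) = \langle x - \hat{c}_r, u \rangle$, so that the projection appearing in the statement reads $x_{rs} = \hat{c}_r + t(x) u$, and the two quantities to be compared become the one-dimensional distances $\|x_{rs} - \hat{c}_r\| = |t(x)|$ and $\|x_{rs} - \hat{c}_s\| = |t(x) - \rho|$.

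Next I would record the only structural fact about the Voronoi cell that is needed. Since $\hat{c}_r$, $\hat{c}_s$ and $x_{rs}$ all lie on the line directed by $u$, while $x - x_{rs}$ is orthogonal to it, Pythagoras gives $\|x - \hat{c}_r\|^2 = \|x - x_{rs}\|^2 + |t(x)|^2$ and $\|x - \hat{c}_s\|^2 = \|x - x_{rs}\|^2 + |t(x) - \rho|^2$. Consequently membership $x \in V_s(\hat{\c}_n)$ forces $\|x_{rs} - \hat{c}_s\| \le \|x_{rs} - \hat{c}_r\|$, that is $t(x) \ge \rho/2$. This is the only place the cell geometry enters, and the decomposition is valid in an arbitrary Hilbert space, so no separability or finite-dimensionality is required.

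The core step is then a short case analysis showing that closeness $\rho < d_{rs}$ makes the slab condition $|t(x)| \le |t(x) - \rho| + d_{rs}$ hold for every $x \in V_s(\hat{\c}_n)$. For $\rho/2 \le t(x) \le \rho$ the condition reduces to $t(x) \le (\rho + d_{rs})/2$, which holds because $\rho < (\rho + d_{rs})/2$; for $t(x) > \rho$ it reduces to $\rho \le d_{rs}$, which again holds. Hence the subset of $V_s(\hat{\c}_n)$ cut out by $\|x_{rs} - \hat{c}_r\| \le \|x_{rs} - \hat{c}_s\| + d_{rs}$ is the whole cell $V_s(\hat{\c}_n)$.

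To close the argument I would invoke the definition of $\hat{p}_{min}$ as the minimal empirical cell mass, giving $P_n(V_s(\hat{\c}_n)) \ge \hat{p}_{min}$. Combined with the previous step this yields $P_n(\{x \in V_s(\hat{\c}_n) \;:\; \|x_{rs} - \hat{c}_r\| \le \|x_{rs} - \hat{c}_s\| + d_{rs}\}) = P_n(V_s(\hat{\c}_n)) \ge \hat{p}_{min}$, contradicting the hypothesis that this mass is strictly below $\hat{p}_{min}$. Therefore $\|\hat{c}_r - \hat{c}_s\| \ge d_{rs}$, as claimed. I do not expect a genuine obstacle: the statement uses optimality of $\hat{\c}_n$ only through the cell-membership inequality $t(x) \ge \rho/2$, and the whole difficulty is the bookkeeping of the projection coordinate together with the two-case comparison. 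The one point to watch is the boundary regime $t(x) > \rho$, where the reduction of the slab condition to $\rho \le d_{rs}$ must be checked to be exactly what the assumed closeness supplies.
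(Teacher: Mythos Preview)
Your argument is correct. The contradiction set-up, the one-dimensional reduction via the projection coordinate $t(x)$, the Pythagoras step giving $t(x)\ge \rho/2$ for $x\in V_s(\hat{\c}_n)$, and the two-case verification that $\rho<d_{rs}$ forces the slab inequality to hold on the whole cell are all sound; the final comparison with $\hat{p}_{min}$ then yields the desired contradiction.

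Note that the paper does not supply its own proof of this lemma: it is imported verbatim from \cite[Lemma~10]{Tang16b} and only invoked in the proof of Proposition~\ref{prop:connection_clusterability_empiricalmargin}. Your write-up is precisely the natural direct argument for this kind of statement, and nothing more is needed. One very small remark: you say the lemma ``uses optimality of $\hat{\c}_n$'' through the half-space inequality $t(x)\ge\rho/2$; in fact this step is purely geometric (definition of a Voronoi cell) and does not rely on $\hat{\c}_n$ minimising $\hat R_{dist}$ at all, so the lemma holds for any codebook.
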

\begin{proof}{Proof of Proposition \ref{prop:connection_clusterability_empiricalmargin}}  
Now let  $x \in \left \{ x | \quad \|x_{rs} - \hat{c}_r\| \leq \|x_{rs} - \hat{c}_s \| + d_{rs} \right \} \cap V_s(\hat{\c}_n)$, for $d_{rs} \leq 2M$. Then
\begin{align*}
\|x-\hat{c}_r\| & \leq \|x - \hat{c}_s\| + d_{rs} \\
\|x - \hat{c}_s\| & \leq \|x-\hat{c}_r\|.
\end{align*} 
Taking squares of both inequalities leads to
\begin{align*}
\left\langle \hat{c}_s - \hat{c}_r, x - \frac{\hat{c}_r + \hat{c}_s}{2} \right\rangle & \geq 0 \\
 2 \left\langle \hat{c}_s - \hat{c}_r, x - \frac{\hat{c}_r + \hat{c}_s}{2} \right\rangle & \leq d_{rs}^2 + 2d_{rs}\|x-\hat{c}_r\| \leq 8M d_{rs}.
\end{align*}
We deduce from above that $d(x,\partial V_{s}(\hat{\c}_n)) \leq \frac{8M}{\hat{B}}d_{rs}$, hence $x \in \mathcal{B}(N(\hat{\c}_n),\frac{8M}{\hat{B}}d_{rs})$. Set 
\[
d_{rs} = \frac{2Mf}{\sqrt{n_{min}}} \geq  f \sqrt{\hat{R}_{dist}(\hat{\c}_n)} \left ( \frac{1}{\sqrt{n_r}} + \frac{1}{\sqrt{n_s}} \right ),
\]
and assume that $\hat{p}\left( \frac{16M^2 f}{\sqrt{n \hat{p}_{min}}\hat{B}} \right ) \leq \hat{p}_{min}$. Then Lemma \ref{lem:connection_clusterability1_clusterabilityKK} entails that for all $\hat{\c}_n$ minimizing $\hat{R}_{dist}$ and $r \neq s$, $\|\hat{c}_r - \hat{c}_s\| \geq d_{rs}$. Hence $X_1, \hdots, X_n$ is $f$-clusterable. 
\end{proof}
\subsection{Proof of Theorem \ref{thm:margin_condition_gives_clusterability}}\label{sec:proof_thm_margin_condition_gives_clusterability}
\begin{proof}
Assume that $P$ satisfies a margin condition with radius $r_0$. For short we denote $R_{dist}(\c) - R_{dist}(\c^*)$ by $\ell(\c,\c^*)$. As in the proof of Theorem \ref{thm:fast_rates_distortion}, according to \eqref{eq:majoration_proba_outofbasin},  \eqref{eq:deviation_proba_in_basin}, choosing $x = p \log(n)$, for $n$ large enough, it holds, for every minimizer $\hat{\c}_n$ of $\hat{R}_{dist}$,
\begin{align*}
\ell(\hat{\c}_n,\c^*) & \leq C \frac{M^2p\log(n)}{np_{min}} \\
\ell(\hat{\c}_n,\c^*) & \geq \frac{p_{min}}{2}\| \hat{\c}_n - \c^*(\hat{\c}_n)\|^2,
\end{align*}
with probability larger than $1-n^{-p}-e^{- \frac{n}{32M^4}\left ( (\delta - \frac{12kM^2}{\sqrt{n}}) \right )^2}$. On this probability event we may thus write
\begin{align}\label{eq:hatc_close_c*}
\| \hat{\c}_n - \c^*(\hat{\c}_n)\| \leq C M \frac{\sqrt{p\log(n)}}{p_{min}\sqrt{n}}.
\end{align}
Since $N(\hat{\c}_n) \subset \mathcal{B}(N(\c^*(\hat{\c}_n),\sqrt{2}\|\hat{\c}_n - \c^*(\hat{\c}_n)\|)$, we get
\begin{align}\label{eq:hatp_first bound} 
\hat{p}(t) & \leq p\left (t + \sqrt{2}C M \frac{\sqrt{p\log(n)}}{p_{min}\sqrt{n}} \right ) \\ \notag
 & \leq \frac{B p_{min} t}{128 M^2 } + C \frac{B \sqrt{p \log(n)}}{M \sqrt{n}}, 
\end{align}
when $n$ is large enough so that $r_n < r_0$ and for $t \leq r_0-r_n$, with $r_n=C M \frac{\sqrt{p\log(n)}}{p_{min}\sqrt{n}}$. It remains to connect $\hat{p}_{min}$ and $\hat{B}$ with their deterministic counterparts. First, it is straightforward that
\begin{align}\label{eq:hatB_B}
\hat{B} \geq B - \sqrt{2}r_n \geq \frac{B}{2},
\end{align}
for $n$ large enough. The bound for $\hat{p}_{min}$ is slightly more involved. Let $i$ and $\hat{\c}_n$ such that $\hat{p}_{min} = P_n \left ( V_i(\hat{\c}_n) \right )$. Then we may write
\begin{align*}
\hat{p}_{min} & = P_n \left ( V_i(\hat{\c}_n) \right ) \\
              & = P_n \left ( V_i(\c^*(\hat{\c}_n)) \right )- P_n \left ( V_i(\c^*(\hat{\c}_n)) \cap V_i(\hat{\c}_n)^c \right ) + P_n \left ( V_i(\c^*(\hat{\c}_n))^c \cap V_i(\hat{\c}_n) \right ) .
\end{align*}
According to \cite[Lemma 4.2]{Levrard15}, $V_i(\c^*(\hat{\c}_n)) \Delta V_i(\hat{\c}_n) \subset \mathcal{B} \left ( N(\c^*(\hat{\c}_n), \frac{4\sqrt{2}M}{B}r_n ) \right )$, where $\Delta$ denotes the symmetric difference. Hoeffding's inequality gives
\begin{align*}
 \left| (P_n-P) \bigcup_{ \c^* \in \mathcal{M} } N(\c^*,\frac{4\sqrt{2}M}{B}r_n) \right | \leq \sqrt{\frac{2 p \log(n)}{n}},
\end{align*}
with probability larger than $1-n^{-p}$. Hence
\begin{align*}
P_n \left ( V_i(\c^*(\hat{\c}_n)) \Delta V_i(\hat{\c}_n) \right ) & \leq p \left ( \frac{4\sqrt{2}M}{B}r_n \right ) + \sqrt{\frac{2 p \log(n)}{n}} \\
& \leq C \frac{\sqrt{p \log(n)}}{\sqrt{n}}, 
\end{align*}
for $n$ large enough so that $\frac{4\sqrt{2}M}{B}r_n \leq r_0$. Concerning $P_n \left ( V_i(\c^*(\hat{\c}_n)) \right )$, using Hoeffding's inequality again we may write
\begin{align*}
P_n \left ( V_i(\c^*(\hat{\c}_n)) \right ) &\geq p_{min} - \sup_{\c^* \in \bar{\mathcal{M}},i =1, \hdots, k} \left |(P_n-P) V_i(\c^*)\right |  \\
 & \geq p_{min} - \sqrt{\frac{2 (p\log(n) + \log(k |\bar{\mathcal{M}}|)}{n}},
\end{align*}
with probability larger than $1-n^{-p}$. We deduce that
\[
\hat{p}_{min} \geq p_{min} - C \frac{\sqrt{p \log(n)}}{\sqrt{n}} \geq \frac{p_{min}}{2},
\]
for $n$ large enough. Thus, \eqref{eq:hatp_first bound} gives
\[
\hat{p}(t) \leq \frac{\hat{B}\hat{p}_{min}}{32M^2}t + C \frac{\hat{B} \sqrt{p \log(n)}}{M \sqrt{n}}.
\]
For $n$ large enough so that  $C \frac{\hat{B} \sqrt{p \log(n)}}{M \sqrt{n}} \leq \frac{\hat{p}_{min}}{2}$, Proposition \ref{prop:connection_clusterability_empiricalmargin} ensures that $X_1, \hdots, X_n$ is $\sqrt{p_{min} n}$-clusterable.

According to Theorem \ref{thm:classif_error_km}, on this probability event, at most $\frac{10}{p_{min}}$ points are misclassified by $\ckm$ compared to $\hat{\c}_n$. Thus, denoting by $n_j = n P_n V_j(\hat{\c}_n)$ and $\hat{n}_j = n(P_n V_j(\ckm))$, we may write 
\begin{align*}
\sum_{j=1}^{k} n_j \| \hat{c}_j - \hat{c}_{KM,j}\| & \leq \sum_{j=1}^{k} \| n_j \hat{c}_j - \hat{n}_j \hat{c}_{KM,j} \| + \left |n_j - \hat{n}_j \right |\|\hat{c}_{KM,j}\| \\
        & \leq \sum_{j=1}^{k} \left \| \sum_{i=1}^{n} X_i (\mathbbm{1}_{V_j(\hat{\c}_n)}(X_i) - \mathbbm{1}_{V_j(\ckm)}(X_i)) \right \|  +  \frac{20M}{p_{min}},    
\end{align*} 
since $\ckm$ and $\hat{\c}_n$ satisfy the centroid condition (Proposition \ref{prop:centroid_condition}). 
Thus,
\[
\sum_{j=1}^{k} n_j \| \hat{c}_j - \hat{c}_{KM,j}\| \leq \frac{30M}{p_{min}}.
\]
At last, since for all $j = 1, \hdots, k$, $\frac{n_j}{n} \geq \hat{p}_{min} \geq \frac{p_{min}}{2}$, we deduce that
\begin{align*}
\| \hat{\c}_n - \ckm \| \leq \frac{60M}{n p_{min}^2}.
\end{align*} 
\end{proof}
\subsection{Proof of Corollary \ref{cor:classification_error_GM}}
\begin{proof}
We recall that under the assumptions of Proposition \ref{prop:GM_margincondition}, $\c^*$ is unique and $P$ satisfies a margin condition with radius $\tilde{B}/8$. As in the proof of Theorem \ref{thm:margin_condition_gives_clusterability}, we assume that 
\[
\| \hat{\c}_n - \c^*\| \leq C \frac{\sqrt{1\log(n)}}{p_{min}\sqrt{n}}.
\]
This occurs with probability larger than $1-n^{-1}-e^{- \frac{n}{32M^4}\left ( (\delta - \frac{12kM^2}{\sqrt{n}}) \right )^2}$. It can be deduced from \cite[Corollary 2.1]{Biau08} that, with probability larger than $1-2e^{-x}$, 
\[
\sup_{\c \in \mathcal{B}(0,M)^k} \left | \hat{R}_{dist}(\c) - R_{dist}(\c) \right | \leq \frac{6kM^2 + 8M^2 \sqrt{2x}}{\sqrt{n}}.
\]
Therefore, for $n$ large enough, it holds
\[
\hat{R}_{dist}(\hat{\c}_n) \geq \frac{R_{dist}(\c^*)}{2}, 
\]
with probability larger than $1-1/n$. On this probability event, a large enough $n$ entails that every initialization of the Lloyd's algorithm is a good initialization. According to Theorem \ref{thm:fast_rates_distortion} and \ref{thm:margin_condition_gives_clusterability}, we may write
\[
\|\ckm - \c^*\| \leq C \frac{M \sqrt{\log(n)}}{\sqrt{n}}.
\]
Since, according to \cite[Lemma 4.2]{Levrard15}, $V_i(\c^*) \Delta V_i(\ckm) \subset \mathcal{B} \left ( N(\c^*, \frac{4\sqrt{2}M}{B}\|\ckm-\c^*\| \right )$, the margin condition entails that
\begin{align*}
R_{classif} \left ( \mathcal{C}(\ckm),\mathcal{C}(\c^*) \right ) \leq C M \sqrt{\frac{\log(n)}{n}} \leq C \sigma \sqrt{\frac{\log(n)}{n}}.
\end{align*}
Using Markov's inequality yields the same result in expectation. It remains to note that in the case $k=2$, $\Sigma_i = \sigma^2 I_d$ and $p_1 = p_2 = \frac{1}{2}$, though $\c^*$ may differ from $\m$, we have $\mathcal{C}(\c^*) = \mathcal{C}(\m)$.
\end{proof}
%\bibliography{biblio}

%\bibliography{biblio}

\end{document}